\renewcommand\paragraph{\@startsection{paragraph}{4}{\z@}%
                                    {1ex \@plus1ex \@minus.2ex}%
                                    {-1em}%
                                    {\normalfont\normalsize\bfseries}}
\newtheorem{theorem}{Theorem}
\newtheorem{proposition}{Proposition}
\newtheorem{lemma}{Lemma}
\newtheorem{corollary}{Corollary}
\theoremstyle{definition}
\newtheorem{definition}{Definition}
\newtheorem{assumption}{Assumption}
\theoremstyle{remark}
\newtheorem{remark}{Remark}
\providecommand{\R}{\ensuremath{\mathbb{R}}}
\providecommand{\C}{\ensuremath{\mathbb{C}}}
\newcommand{\ortho}{\mathbb{O}} %% use for orthogonal group
\renewcommand{\vec}[1]{\ensuremath{\boldsymbol{#1}}}
\providecommand{\mat}[1]{\ensuremath{\boldsymbol{#1}}}
\providecommand{\mW}{\mat{W}}
\providecommand{\vr}{\vec{r}}
\providecommand{\vu}{\vec{u}}
\DeclareMathOperator{\diag}{diag}
\DeclareMathOperator{\op}{op}
\newcommand{\explain}[2]{\overset{\text{\tiny{#1}}}{#2}} %to provide explanations
\newcommand{\unif}[1]{\mathsf{Unif}(#1)} %% use for Haar measure/ Uniform measure
\newcommand{\E}{\mathbb{E}} %% for expectations
\renewcommand{\P}{\mathbb{P}} %% for probabilities
\newcommand{\Var}{\mathrm{Var}}
\newcommand{\Cov}{\mathrm{Cov}}
\newcommand{\gauss}[2]{\mathcal{N}\left( #1,#2 \right)} %% for Gaussian distribution
\newcommand{\pc}{\overset{\P}{\longrightarrow}} %conv. in P
\renewcommand{\tilde}{\widetilde}
\renewcommand{\hat}{\widehat}
\DeclareFontFamily{U}{mathx}{\hyphenchar\font45}
\DeclareFontShape{U}{mathx}{m}{n}{
      <5> <6> <7> <8> <9> <10>
      <10.95> <12> <14.4> <17.28> <20.74> <24.88>
      mathx10
      }{}
\DeclareSymbolFont{mathx}{U}{mathx}{m}{n}
\DeclareMathAccent{\widecheck}{0}{mathx}{"71}
\DeclareMathAccent{\wideparen}{0}{mathx}{"75}
\renewcommand{\dim}{N} %size of the matrices
\DeclareMathOperator*{\plim}{\mathrm{plim}}
\newcommand{\BE}{\begin{equation}}
\newcommand{\EE}{\end{equation}}
\newcommand{\BS}{\begin{subequations}}
\newcommand{\ES}{\end{subequations}}
\newcommand{\UT}{\mathsf{T}}   %% upright T
\newcommand{\sa}[1]{\left\langle {#1} \right\rangle} %sample average
\newcommand{\sadiv}[1]{ \sa{\partial_{#1-1} \vu_{{#1}} }}
\newcommand{\para}[1]{\mathsf{#1}}
\newcommand{\paraB}[1]{\bm{\mathsf{#1}}}
\title{Unifying AMP Algorithms for Rotationally-Invariant Models}
\author{Songbin Liu $\quad \quad$ Junjie Ma\thanks{The authors are with the Institute of Computational Mathematics and Scientific/Engineering Computing, Academy of Mathematics and Systems Science, Chinese Academy of Sciences, China. \texttt{Emails:\{liusongbin,majunjie\}@lsec.cc.ac.cn}}}
\date{}
\begin{document}

\maketitle

 \begin{abstract}
This paper presents a unified framework for constructing Approximate Message Passing (AMP) algorithms for rotationally-invariant models. By employing a general iterative algorithm template and reducing it to long-memory Orthogonal AMP (OAMP), we systematically derive the correct Onsager terms of AMP algorithms. This approach allows us to rederive an AMP algorithm introduced by Fan and Opper et al., while shedding new light on the role of free cumulants of the spectral law. The free cumulants arise naturally from a recursive centering operation, potentially of independent interest beyond the scope of AMP. To illustrate the flexibility of our framework, we introduce two novel AMP variants and apply them to estimation in spiked models.
 \end{abstract}

\tableofcontents

\section{Introduction}

\subsection{AMP Algorithm}

Approximate message passing (AMP) \citep{donoho2009message,kabashima2003cdma,bayati2011dynamics,bolthausen2014iterative,feng2022unifying} has been applied to a broad range of high-dimensional estimation problems. Assume that $\bm{W}\in\mathbb{R}^{N\times N}$ is a symmetric random matrix sampled from the Gaussian orthogonal ensemble (GOE), namely, $\bm{W}=\frac{1}{\sqrt{2N}}\left(\bm{G}+\bm{G}^\UT\right)$, where $\bm{G}$ consists of independent identically distributed (i.i.d.) Gaussian entries. Starting from an initial guess $\bm{u}_0$, the AMP algorithm proceeds as follows:
\BS\label{Eqn:AMP}
\begin{align}
\vr_t &= \mW\vu_t - \sadiv{t}\cdot\vu_{t-1}, \\
\vu_{t+1} &= \eta_{t+1} (\vr_t),
\end{align}
\ES
where $\eta_t:\mathbb{R}\mapsto\mathbb{R}$ is Lipschitz continuous and applies to elements of the input vector $\bm{r}_t$ separately and $\sadiv{t}:=\sum_{i-1}^N\eta'_{t}\left(r_{t-1}[i]\right)/N$ is the divergence of the nonlinearity $\eta_t$. A distinguishing characteristic of the AMP algorithm is that the empirical law of the iterates $\bm{r}_t$ converges to a Gaussian distribution in the high-dimensional limit:
\[
\bm{r}_t\to \mathcal{N}(0,\tau_t^2),\quad\forall t\ge1.
\]
Moreover, the variance of the limiting Gaussian distribution can be tracked by a simple recursion known as \textit{state evolution}:
\[
\tau_t^2 = \mathbb{E}_{Z\sim\mathcal{N}(0,1)}\left[\eta_{t-1}^2\left(\tau_{t-1}Z\right)\right].
\]
This property enables precise characterization of the performance of AMP, and has shed important insight to various high-dimensional estimation and optimization problems \cite{Barbier2019,sur2019modern,feng2022unifying,reeves2019replica,yedla2014simple,donoho2013information,montanari2024equivalence,bayati2011lasso,donoho2011noise,el2021optimization,montanari2021estimation,bu2020algorithmic,wang2020bridge}.

A major limitation of the Gaussian AMP algorithm (referred to as \textit{Gaussian AMP} hereafter) is that its theoretical guarantee relies on the fact that $\bm{W}$ is GOE. Extending AMP beyond this setup is the subject of extensive studies in recent years \citep{ma2014turbo,opper2016theory,ma2017orthogonal,takeuchi2019rigorous,rangan2019vector,liu2022memory,takeuchi2021bayes,fan2022approximate,zhong2024approximate,dudeja2024optimality,cakmak2014s,takeuchi2023orthogonal,li2023spectrum,dudeja2022universality,dudeja2022spectral,dudeja2023universality,wang2024universality,rossetti2024linear,cademartori2024non,takeuchi2019unified,venkataramanan2022estimation,maillard2019high,fletcher2018inference,rossetti2024linear}. In particular, building on the earlier work of \citet{opper2016theory}, \citet{fan2022approximate} proposed the following AMP algorithm for \textit{rotationally-invariant} models:
\BS\label{eq:ZFAMP}
\begin{align}
\vr_t &= \mW\vu_t - \left(\para{b}_{t,1}\bm{u}_1+\para{b}_{t,2}\bm{u}_2+\cdots+\para{b}_{t,t}\bm{u}_t\right), \\
\vu_{t+1} &= \eta_{t+1} (\vr_t).
\end{align}
\ES
where the coefficients $(\para{b}_{t,i})_{1\le t,1\le i\le t}$ depend on the \textit{free cumulants} of the spectral law of $\bm{W}$. This AMP algorithm, which we refer to as rotationally-invariant AMP (RI-AMP), was first proposed in \citep{opper2016theory} using the non-rigorous dynamical functional theory in the context of Ising models; see \cite{opper2016theory,mimura2014generating} and the references therein for more information. Similar to the original AMP, the iterate $\bm{r}_t$ in RI-AMP is asymptotically Gaussian distributed. (Note that unlike the Gaussian AMP in \eqref{Eqn:AMP}, the Onsager term in the above RI-AMP algorithm involves all iterates $(\bm{u}_i)_{i\le t}$, even when the nonlinearity $\eta_t$ only depends on $\bm{r}_t$.) Moreover, the variance of the limiting Gaussian distribution can be tracked by a state evolution, which was rigorously proved in \citep{fan2022approximate} using a conditioning technique pioneered in \cite{bolthausen2014iterative,bayati2011dynamics} for Gaussian models and generalized in \cite{rangan2019vector,takeuchi2019rigorous} for rotationally-invariant models. 

The above RI-AMP algorithm has been generalized to various other setups, including the rectangular matrix setting \citep{fan2022approximate}, generalized linear models (GLM) \citep{venkataramanan2022estimation}, and multi-layer GLM \citep{xu2023approximate}.

\subsection{Orthogonal AMP Algorithm}
The orthogonal AMP (OAMP) \cite{ma2017orthogonal} or vector AMP (VAMP) \cite{rangan2019vector} algorithms, which can be derived based on expectation propagation \cite{minka2013expectation,opper2005expectation}, represent another line of work that generalizes AMP to rotationally-invariant models. This algorithm has been applied in various high-dimensional estimation problems \cite{ma2021spectral,ma2023towards,cheng2020orthogonal,pandit2020inference}. OAMP relies on the use of \textit{trace-free} matrix denoisers and \textit{divergence-free} iterate denoisers:
\BS\label{Eqn:LM_OAMP}
\begin{align}
\bm{x}_t &= \left(f_t(\bm{W})-\frac{\text{tr}\left(f_t(\bm{W})\right)}{N}\cdot\bm{I}_N\right)\bar{\bm{x}}_{t},\label{Eqn:LM_OAMP_a}\\
\bar{\bm{x}}_{t+1} &= g_{t+1} (\bm{x}_1,\ldots,\bm{x}_t)-\sum_{i=1}^t \langle\partial_i  g_{t+1} (\bm{x}_1,\ldots,\bm{x}_t)\rangle\cdot \bm{x}_i, \label{Eqn:LM_OAMP_b}
\end{align}
\ES
where $f_t:\mathbb{R}\mapsto\mathbb{R}$ is applied to the matrix $\bm{W}$ in the following sense: let $\bm{W}=\bm{O}\mathrm{diag}\left(\lambda_1,\ldots,\lambda_N\right)\bm{O}^\UT$ be the eigenvalue decomposition of $\bm{W}$, then $f_t(\bm{W}):=\bm{O}\mathrm{diag}\left(f_t(\lambda_1),f_t(\lambda_2),\ldots,f_t(\lambda_N)\right)\bm{O}^\UT$; 
and $\partial_s g_t$ denotes the partial derivative with respect to the $s$th argument of $g_t$. A precise definition of the OAMP algorithm can be found in Section \ref{Sec:OAMP_SE}. The original OAMP algorithm \cite{ma2017orthogonal} assumes $g_t$ to be a univariate function of $\bm{r}_t$, and the extension to the multivariate case is due to \citet{takeuchi2019unified}. Note that the long-memory OAMP algorithm originally proposed in \citep{takeuchi2019unified} is more general than \eqref{Eqn:LM_OAMP}. The above presentation of the OAMP algorithm follows \citet{dudeja2022spectral} and is general enough for the purpose of the present paper. Similar to RI-AMP, the iterate $\bm{x}_t$ in OAMP converges to Gaussian $\bm{x}_t\to \mathcal{N}(0,\tau_t^2)$, $\forall t\ge1$, thanks to the use of trace-free matrix denoiser and divergence-free iterate denoisers. 

The OAMP iteration in \eqref{Eqn:LM_OAMP} has been used to construct various AMP algorithms, such as convolutional AMP (CAMP) \citep{takeuchi2021bayes} and Memory AMP (MAMP) \citep{liu2022memory}. In a different direction, \citet{takeuchi2019unified} showed that the original Gaussian AMP algorithm can be mapped to some OAMP algorithm, in the context of compressed sensing. The idea of reducing a general iterative algorithm to certain OAMP algorithms has been explored in \citet{dudeja2022spectral} for proving universality (with respect to the sensing matrix) of the performance of convex regularized least squares estimators, and in \citet{dudeja2024optimality} for analyzing the statistically optimal performance achievable within a broad class of iterative algorithms for spiked models.

\subsection{Contributions}

Compared with the Gaussian AMP counterpart, the derivations of the rotationally-invariant AMP in \cite{opper2016theory,fan2022approximate} are more complicated. In this paper, we aim to provide a unified, and arguably more elementary, approach to constructing AMP algorithms (more specifically, their Onsager terms) for rotationally-invariant models. Our approach can be used to re-derive existing AMP algorithms and devise new variants quite easily. The main contributions of this paper include:

\begin{itemize}
\item We introduce a unified framework for constructing AMP algorithms for rotationally-invariant models. Our approach is through reduction to long-memory OAMP algorithms, based on \textit{orthogonal decomposition} of the iterates and recursive unfolding the algorithm. The orthogonal decomposition idea was first introduced in \cite{dudeja2022spectral} but for a different purpose. We use this technique to re-derive the RI-AMP algorithm. Our results provide an alternative and more interpretable state evolution of RI-AMP.

\item In our derivation of RI-AMP, the de-biasing coefficients are naturally represented as the normalized traces of certain polynomials of $\bm{W}$. We prove that these coefficients are related to the free cumulants of the spectral law of $\bm{W}$. Our proof relies on a recursive characterization of the free cumulants, which appears to be novel and may be of independent interest.

\item We demonstrate the versatility of our approach by devising two variants of RI-AMP. The first variant, which we refer to as RI-AMP-DF, employs a different form of the Onsager term which only cancels out the essential non-Gaussian terms. We show that RI-AMP-DF is equivalent to RI-AMP with a change of variables. The second variant (called RI-AMP-MP), which applies a nonlinear matrix processing on $\bm{W}$, is inspired by \citet{barbier2023fundamental}. We further apply RI-AMP-DF to a signal estimation problem in spiked models. Our approach provides a generalization of the BAMP algorithm proposed in \citep{barbier2023fundamental} to handle general non-polynomial matrix processing functions.
\end{itemize}

\subsection{Organization and Notations}

\paragraph{Organization.} This paper is organized as follows. We start with some preliminary results on free cumulants in Section \ref{Sec:preliminaries}. In Section \ref{Sec:OAMP_SE}, we review some existing results on the orthogonal AMP algorithm. Section \ref{sec:AMP_single} contains the main results of this paper. Section \ref{Sec:generalizations} provides some generalizations and applications of our framework. Section \ref{Sec:numerical} includes some numerical experiments. The appendices contain omitted proofs and detailed calculations.
\paragraph{Notation.} The sets $\R, \C$ represent the set real numbers and complex numbers. $[N]$ represents the set $\{1, 2, \ldots, N\}$ and $\ortho(\dim)$ denotes the set of $\dim \times \dim$ orthogonal matrices. We use the bold-face font for vectors and matrices. $\|\bm{u}\|$ denotes the $\ell_2$ norm of the vector $\bm{u}$. For a vector $\bm{u}\in\bm{R}^N$, $\langle \bm{u}\rangle:=\frac{1}{N}\sum_{i=1}^N u_i$. $\bm{A}\otimes\bm{B}$ and $\bm{A}\circ\bm{B}$ denote Kronecker product and Hadamard product of $\bm{A}$ and $\bm{B}$, respectively. $\diag(\bm{u})$ represents diagonal matrix with diagonal entries given by the entries of $\bm{u}$. $\diag(\bm{A}_1,\ldots,\bm{A}_t)$ denotes a block diagonal matrix with the matrices $\bm{A}_1,\ldots,\bm{A}_t$ placed on the diagonal blocks. $\text{tr}(\bm{M})$ and $\|\bm{M}\|_{\op}$ represent the trace and operator norms of the matrix $\bm{M}$ respectively. $\bm{I}_{k}$ denotes the $k \times k$ identity matrix. We use $\E[\cdot], \Var[\cdot], \Cov[\cdot]$ to denote expectations, variances, and covariances of random variables. $\gauss{\bm{\mu}}{\bm{\Sigma}}$ denotes
the Gaussian distribution with mean vector $\bm{\mu}$ and covariance matrix $\bm{\Sigma}$.  We use $\unif{\ortho(\dim)}$ to denote the Haar measure on the orthogonal group $\ortho(\dim)$. The probability measure $\delta_x$ on $\R$ denotes the point mass at $x\in\mathbb{R}$. We use $\pc$ to denote convergence in probability. For a sequence of real-valued random variables $(Y_{\dim})_{\dim \ge1}$, we say that $\underset{N\to\infty}{\plim}\  Y_N = y$ if $Y_{\dim} \pc y$.

\section{Preliminaries}\label{Sec:preliminaries}

The Onsager term of the rotationally-invariant AMP (RI-AMP) algorithm in \eqref{eq:ZFAMP} involves the \textit{free cumulants} \citep{nica2006lectures} of the spectral measure $\mu$. In this section, we will first review the definition of free cumulants and then introduce a recursive characterization of free cumulants. This recursive characterization, which appears to be novel, will be used in our derivation of RI-AMP.

\subsection{Free Cumulant}

\paragraph{Non-crossing partition.} A \emph{partition} of the set $ \{1,2,\ldots,k\}$ is a collection of nonempty disjoint sets $B_1, B_2,\ldots, B_k$, called \textit{blocks}, whose
union is $[k]$. A partition is \textit{non-crossing} if there are no four distinct elements $1\le a < b < c < d\le k$ such that $a, c$ are in the same block while $b, d$ are in another block. The collection of all non-crossing partition of $\{1,\ldots,k\}$ is denoted as $\text{NC}(k)$. {An example of a non-crossing partition of $\{1,2,3,4,5\}$ is shown in Fig.~\ref{Fig:NC}.}\vspace{5pt}

\begin{figure}[htbp]
\centering
\includegraphics[width=0.2\textwidth]{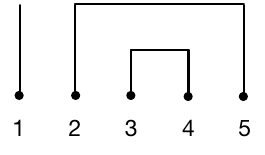}
\caption{A non-crossing partition of $\{1,2,3,4,5\}$: $(\{1\}, \{2,5\},\{3,4\})$.}\label{Fig:NC}
\end{figure}

\vspace{5pt}

\paragraph{Free cumulant.} Let $m_k:=\mathbb{E}[X^k]$ be the $k$-th moment of a random variable $X$. The free cumulants $(\kappa_k)_{k\ge1}$ of $X$ are defined implicitly in terms of the moments $(m_k)_{m\ge1}$ through the moment-cumulant formula \citep{nica2006lectures}:
\begin{equation}\label{Eqn:free_cumulant_def}
m_k=\sum_{\pi\in\text{NC}(k)}\kappa_{\pi},
\end{equation}
where $\kappa_\pi:=\prod_{B\in\pi}\kappa_{|B|}$ is the product of free cumulants corresponding to the cardinality of every block $B\in\pi$. For example, $\text{NC}(3)$ includes the following five non-crossing partitions: $(\{1,2,3\})$, $(\{1,2\},\{3\})$, $(\{1,3\},\{2\})$, $(\{1\},\{2,3\})$, $(\{1\},\{2\},\{3\})$. (In fact, all partitions of $\{1,2,3\}$ are non-crossing.) Then, according to \eqref{Eqn:free_cumulant_def},
\begin{align*}
m_3 &=\kappa_3 + \kappa_2\cdot\kappa_1+\kappa_2\cdot\kappa_1+\kappa_1\cdot\kappa_2+\kappa_1^3.
\end{align*}
Note that the recursive formula \eqref{Eqn:free_cumulant_def} uniquely determines the sequence of free cumulants $(\kappa_k)_{k\ge1}$ from the sequence of moments $(m_k)_{k\ge1}$ \citep[Lecture 10]{nica2006lectures}.

\subsection{A Recursive Characterization of Free Cumulants}\label{Sec:free_cumulants_newIte}

Proposition \ref{Lem:cumulants} below introduces a recursive characterization of free cumulants. This recursion naturally appears in our derivation of the RI-AMP algorithm to be detailed in Section \ref{sec:AMP_single}.

\begin{proposition}\label{Lem:cumulants}
Assume that the moments $(m_n)_{n\ge1}$ of a random variable $\mathsf{\Lambda}$ exist for all orders. Let $(\kappa_n)_{n\ge1}$ be the free cumulants of $\mathsf{\Lambda}$. Define a sequence of random variables $(Q_n)_{n\ge0}$ recursively as follows:
\BE\label{Eqn:Q_def}
Q_n= \mathsf{\Lambda}Q_{n-1}-\sum_{i=1}^n \mathbb{E}[\mathsf{\Lambda}Q_{i-1}]\cdot Q_{n-i},\quad\forall n\ge1,
%Q_{t+1}= \mathsf{\Lambda}Q_{t}-\left(\mathbb{E}[\mathsf{\Lambda}Q_{t}]\cdot Q_0+\mathbb{E}[\mathsf{\Lambda}Q_{t-1}]\cdot Q_1+\cdots + \mathbb{E}\left[\mathsf{\Lambda}Q_0\right]\cdot Q_t \right),\quad\forall t\ge0,
\EE
where $Q_0:=1$. Then, we have 
\BE\label{Eqn:kappa_Q}
\mathbb{E}[\mathsf{\Lambda}Q_{n-1}]=\kappa_{n}, \quad\forall n\ge1.
\EE
\end{proposition}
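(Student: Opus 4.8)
The plan is to prove \eqref{Eqn:kappa_Q} by strong induction on $n$, showing that the defining recursion \eqref{Eqn:Q_def} for $(Q_n)$ is equivalent to the moment-cumulant formula \eqref{Eqn:free_cumulant_def}. The key is to obtain a closed-form combinatorial expansion for $\mathbb{E}[\mathsf{\Lambda}Q_{n-1}]$ and for $\mathbb{E}[Q_n]$ indexed by non-crossing partitions, and then match terms. Concretely, I would first establish by induction the auxiliary identity $\mathbb{E}[Q_n] = \mathbb{1}\{n=0\}$ for all $n\ge 0$: taking expectations in \eqref{Eqn:Q_def} gives $\mathbb{E}[Q_n] = \mathbb{E}[\mathsf{\Lambda}Q_{n-1}] - \sum_{i=1}^n \mathbb{E}[\mathsf{\Lambda}Q_{i-1}]\,\mathbb{E}[Q_{n-i}]$, and the $i=n$ term of the sum cancels $\mathbb{E}[\mathsf{\Lambda}Q_{n-1}]$, leaving $-\sum_{i=1}^{n-1}\mathbb{E}[\mathsf{\Lambda}Q_{i-1}]\,\mathbb{E}[Q_{n-i}]$, which vanishes by the inductive hypothesis $\mathbb{E}[Q_{n-i}]=0$ for $1\le i\le n-1$. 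This means the $Q_n$ are "centered" for $n\ge 1$, which is the recursive centering operation mentioned in the abstract.

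Next, set $c_n := \mathbb{E}[\mathsf{\Lambda}Q_{n-1}]$; the goal is $c_n = \kappa_n$. I would unfold $Q_{n-1}$ recursively inside $\mathbb{E}[\mathsf{\Lambda}Q_{n-1}]$, expressing $c_n$ in terms of moments of $\mathsf{\Lambda}$ and the lower-order $c_i$'s. Iterating \eqref{Eqn:Q_def} expresses $Q_{n-1}$ as a signed sum of monomials $\mathsf{\Lambda}^j$ with coefficients built from products of the $c_i$'s; taking $\mathbb{E}[\mathsf{\Lambda}\,\cdot\,]$ then yields a relation of the shape $m_n = \sum$ (products of $c_i$'s), where I will argue the sum ranges exactly over $\mathrm{NC}(n)$ with $\prod_{B\in\pi} c_{|B|}$. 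The cleanest way to see the non-crossing structure: think of the recursion \eqref{Eqn:Q_def} as peeling off the block of the partition containing the element $1$ — if that block is $\{1\}\cup\{$some elements$\}$ with the "first return" at position $i$, the block containing $1$ contributes $c_i$ (this is the $\mathbb{E}[\mathsf{\Lambda}Q_{i-1}]$ factor, where $Q_{i-1}$ accounts for the nested non-crossing partitions strictly between the legs of that block) and $Q_{n-i}$ accounts for the non-crossing partition on the remaining $n-i$ elements. This is precisely the standard recursive decomposition of $\mathrm{NC}(n)$ used to derive the free-cumulant recursion. Matching this against \eqref{Eqn:free_cumulant_def}, and using uniqueness of the solution to the moment-cumulant relations \citep[Lecture 10]{nica2006lectures}, forces $c_n = \kappa_n$ for all $n$. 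Alternatively — and this may be the slicker route — I would verify directly that the sequence $(c_n)$ defined by the $Q$-recursion satisfies the *same* recursion that $(\kappa_n)$ is known to satisfy (e.g., Speicher's recursion $m_n = \sum_{s=1}^{n}\sum_{i_1+\cdots+i_s = n-s}\kappa_s\, m_{i_1}\cdots m_{i_s}$, or equivalently the $R$-transform/moment relation), and then invoke uniqueness; I would settle on whichever form of the known cumulant recursion lines up most transparently with the two-term structure of \eqref{Eqn:Q_def} after the $\mathbb{E}[Q_j]=0$ simplification.

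The main obstacle I anticipate is the bookkeeping in the inductive unfolding: making the correspondence between the recursively generated monomials in the $Q$-expansion and non-crossing partitions fully rigorous, rather than merely suggestive, requires care about which factor ($\mathsf{\Lambda}$, $Q_{i-1}$, or $Q_{n-i}$) carries which block and why no crossings can be produced. I expect the argument to go through smoothly once one fixes the bijection "block of $1$ $\leftrightarrow$ first-return index $i$," since $Q_{i-1}$ is supported on the interior interval and $Q_{n-i}$ on the exterior, which are automatically non-interleaving; the key enabling fact is exactly $\mathbb{E}[Q_j]=\mathbb{1}\{j=0\}$, which is what makes the "centered" blocks of size $\ge 1$ interact correctly. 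A secondary point to be careful about is that all manipulations are formal identities in the polynomial algebra generated by $\mathsf{\Lambda}$ under the linear functional $\mathbb{E}[\cdot]$, so no analytic issues arise beyond the stated existence of all moments.
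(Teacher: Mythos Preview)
Your proposal is correct in outline, and both routes you sketch would succeed, but the paper's proof is organized differently. The paper does \emph{not} work with the ``block containing $1$'' decomposition of $\mathrm{NC}(n)$; instead it proves a two-parameter identity
\[
\mathbb{E}[\mathsf{\Lambda}^{\ell}Q_{n}]=\sum_{(s_1,\ldots,s_\ell)\in\mathcal{S}(\ell,n)}\ \prod_{j=1}^\ell q_{s_j},\qquad q_i:=\mathbb{E}[\mathsf{\Lambda}Q_{i-1}],
\]
by induction on $\ell$, where $\mathcal{S}(\ell,n)$ is a set of integer tuples in bijection with \L ukasiewicz/Dyck-type paths starting at height $n$ and ending at $0$. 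Specializing to $n=0$ gives $m_\ell=\sum_{\pi\in\mathrm{NC}(\ell)}\prod_B q_{|B|}$ via a separate bijection lemma between $\mathcal{S}(\ell,0)$ and $\mathrm{NC}(\ell)$, and then uniqueness finishes. This two-parameter result is strictly stronger than what you need (it also yields the ``partial moments'' of \cite{fan2022approximate}) and makes the induction clean, since one step of $\mathsf{\Lambda}$ simply sends $(\ell,n)\mapsto(\ell+1,n-1)$ after unfolding $Q_{n+1}$.

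Your ``first return at position $i$'' reading of the recursion is slightly off as stated: the identity $\mathsf{\Lambda}Q_j=Q_{j+1}+\sum_{i=1}^{j+1}c_iQ_{j+1-i}$ is really a \L ukasiewicz-path step (up by one, or down by $i-1$ with weight $c_i$), and the NC structure emerges from the path-to-partition bijection rather than from peeling the block of $1$ in one shot. Once you reframe it that way, your main route becomes essentially the paper's argument. Your centered-ness observation $\mathbb{E}[Q_n]=\mathbb{1}\{n=0\}$ is the right starting point and is what makes only height-$0$ endpoints contribute. Your \emph{alternative} route---verifying that $(c_n)$ satisfies the functional relation $M(w)=1+C(wM(w))$ (equivalently Speicher's recursion) via the generating function $\hat Q(z)=\sum Q_nz^n=(1-z\mathsf{\Lambda}+C(z))^{-1}$ and $\mathbb{E}[\hat Q(z)]=1$---is genuinely different from the paper and arguably slicker for this proposition alone, though it forgoes the extra combinatorial identity the paper obtains along the way.
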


\begin{proof}
See Appendix \ref{App:preliminary_results}. 
\end{proof}

\begin{remark}[Connection with the partial moments in \citep{fan2022approximate}]
The random variables $(Q_n)_{n\ge0}$ defined in Proposition \ref{Lem:cumulants} are closely related to the ``partial moments'' introduced by \citet{fan2022approximate}, which are doubly-indexed sequence of coefficients that interpolate moments and free cumulants. Let $(m_n)_{n\ge1}$ and $(\kappa_n)_{n\ge0}$ (with the convention $\kappa_0 = 1$) be the moments and free cumulants of $\mathsf{\Lambda}\sim\mu$. The partial moments $(c_{k,j})_{k,j\geq 0}$ are defined via \cite[Appendix A.1]{fan2022approximate}:
\BE\label{Eqn:partial_moments_def}
c_{k,j} = \sum_{m=0}^{j+1} c_{k-1,m} \cdot \kappa_{j+1-m},\quad\forall k\ge 1,j\ge0,
\EE
with initialization $c_{0,0} = 1$ and $c_{0,j}= 0, \forall  j\geq 1$. It can be shown that
\BE\label{Eqn:partial_moment_identity}
c_{k,j} = \mathbb{E}\left [\mathsf{\Lambda}^k Q_{j}\right] ,\quad\forall k,j\ge0.
\EE
The proof of \eqref{Eqn:partial_moment_identity} can be found in Appendix \ref{App:partial_moments}.

\end{remark}

%\subsection{Calculations of Free Cumulants From Moments}

Proposition \ref{Lem:cumulants} suggests a way of computing the sequence of free cumulants $(\kappa_n)_{n\ge1}$ from the sequence of moments $(m_n)_{n\ge1}$. Note that $Q_n$ is a degree-$n$ polynomial of $\mathsf{\Lambda}$ (see \eqref{Eqn:Q_def}). The polynomial coefficients can be computed recursively, as demonstrated in the following corollary.

\begin{corollary}\label{Cor:poly_coefficients}
Let $(\alpha_{n,i})_{1\le n,0\le i\le n}$ be the coefficients for the polynomial representations of $(Q_n)_{n\ge1}$:
\BE\label{Eqn:Q_poly_def}
Q_n =\sum_{i=0}^n \alpha_{n,i} \mathsf{\Lambda}^i,\quad\forall n\ge1.
\EE
Then, $(\alpha_{n,i})_{1\le n,0\le i\le n}$ satisfy the following recursion
\BE\label{Eqn:alpha_recursion}
\alpha_{n,i}=
\alpha_{n-1,i-1}-\sum_{j=1}^{n-i}\kappa_j\cdot \alpha_{n-j,i}, \quad \forall n\ge 1,\ 0\le i\le n-1,
\EE
with $\alpha_{i,i}=1$, and $\alpha_{i,-1}=0$, $\forall i\ge0$.
\end{corollary}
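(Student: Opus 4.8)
The plan is to derive the recursion \eqref{Eqn:alpha_recursion} directly by substituting the polynomial ansatz \eqref{Eqn:Q_poly_def} into the defining recursion \eqref{Eqn:Q_def}, using Proposition~\ref{Lem:cumulants} to replace the expectations $\mathbb{E}[\mathsf{\Lambda}Q_{i-1}]$ by free cumulants $\kappa_i$. Concretely, from \eqref{Eqn:Q_def} and \eqref{Eqn:kappa_Q} we have
\[
Q_n=\mathsf{\Lambda}Q_{n-1}-\sum_{i=1}^n\kappa_i\,Q_{n-i},\qquad\forall n\ge1.
\]
Now I would plug in $Q_m=\sum_{j=0}^m\alpha_{m,j}\mathsf{\Lambda}^j$ on both sides. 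The left side is $\sum_{i=0}^n\alpha_{n,i}\mathsf{\Lambda}^i$. The first term on the right is $\mathsf{\Lambda}Q_{n-1}=\sum_{j=0}^{n-1}\alpha_{n-1,j}\mathsf{\Lambda}^{j+1}=\sum_{i=1}^n\alpha_{n-1,i-1}\mathsf{\Lambda}^i$. The second term is $\sum_{i=1}^n\kappa_i\sum_{j=0}^{n-i}\alpha_{n-i,j}\mathsf{\Lambda}^j$; reindexing, the coefficient of $\mathsf{\Lambda}^i$ in this double sum is $\sum_{j=1}^{n-i}\kappa_j\,\alpha_{n-j,i}$ (the sum runs over $j\ge1$ with $n-j\ge i$, i.e. $j\le n-i$).

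The key step is then to match coefficients of $\mathsf{\Lambda}^i$ for each $i$. One subtlety to handle carefully is that $Q_n$ has degree exactly $n$ (its leading coefficient is $1$ since $\mathsf{\Lambda}Q_{n-1}$ contributes $\mathsf{\Lambda}^n$ with coefficient $\alpha_{n-1,n-1}=1$ and no other term on the right reaches degree $n$); this is exactly the boundary condition $\alpha_{n,n}=1$, which also serves as the base of the induction. For $0\le i\le n-1$, matching coefficients gives precisely
\[
\alpha_{n,i}=\alpha_{n-1,i-1}-\sum_{j=1}^{n-i}\kappa_j\,\alpha_{n-j,i},
\]
which is \eqref{Eqn:alpha_recursion}. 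The convention $\alpha_{i,-1}=0$ makes the $i=0$ case of the first term vanish correctly (since $\mathsf{\Lambda}Q_{n-1}$ contributes nothing at degree $0$), and $\alpha_{0,0}=1$ is consistent with $Q_0=1$.

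There is no real obstacle here — the argument is a routine reindexing and coefficient-comparison once Proposition~\ref{Lem:cumulants} is available; the only thing requiring a little care is bookkeeping the index ranges (ensuring the inner sum over $j$ stops at $n-i$ because $\alpha_{n-j,i}=0$ whenever $n-j<i$, i.e. the polynomial $Q_{n-j}$ has degree below $i$) and verifying the edge cases $i=0$ and $i=n$ against the stated boundary conditions. I would present this as a short induction on $n$, invoking \eqref{Eqn:kappa_Q} at the outset to pass from \eqref{Eqn:Q_def} to the cumulant form of the recursion, then expanding and identifying coefficients.
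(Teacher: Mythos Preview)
Your proposal is correct and follows essentially the same approach as the paper: invoke Proposition~\ref{Lem:cumulants} to rewrite \eqref{Eqn:Q_def} in cumulant form, substitute the polynomial expansion, reindex, and compare coefficients of $\mathsf{\Lambda}^i$, handling the boundary cases $i=0$ and $i=n$ via the conventions $\alpha_{i,-1}=0$ and $\alpha_{i,i}=1$. The paper's write-up carries out exactly this computation, separating the $i=0$, $1\le i\le n-1$, and $i=n$ cases before unifying them with the stated conventions.
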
 

The proof of Corollary \ref{Cor:poly_coefficients} can be found in Appendix \ref{App:poly_coefficients_proof}. The free cumulant $\kappa_n$ reads
\BE\label{Eqn:kappa_polynomial}
\kappa_{n}=\mathbb{E}[\mathsf{\Lambda}Q_{n-1}]\explain{\eqref{Eqn:Q_poly_def}}{=}\sum_{i=0}^{n-1} \alpha_{n-1,i} \cdot\mathbb{E}\left[\mathsf{\Lambda}^{i+1}\right]=\sum_{i=0}^{n-1} \alpha_{n-1,i} \cdot m_{i+1},\quad\forall n\ge1.
\EE
Overall, the sequence of free cumulants $(\kappa_n)_{n\ge1}$ can be calculated by alternatively updating \eqref{Eqn:alpha_recursion} and \eqref{Eqn:kappa_polynomial} in a recursive fashion. For the reader's convenience, we describe the whole procedure in Algorithm \ref{Alg:exact}.

\begin{center}  % center algorithm
\begin{minipage}{.7\linewidth}  % mini-page
\begin{algorithm}[H]
\caption{Calculating free cumulants from moments}
\begin{algorithmic}[1]
\Statex \textbf{Input:} $(m_n)_{n\ge1}$
\Statex \textbf{Initilization:} $\alpha_{n,n}=1$, $\alpha_{n,-1}=0$, $\forall n\ge0$. $\kappa_1=m_1$.
    \For{$n=1,2,\ldots,$}  
        		\For{$i=0,1,\ldots,n-1$}
		\State $\alpha_{n,i}=\alpha_{n-1,i-1}-\sum_{j=1}^{n-i}\kappa_j\cdot \alpha_{n-j,i}$
		\EndFor
		\State $\kappa_{n+1}=\sum_{j=0}^{n} \alpha_{n,j} \cdot m_{j+1}$
    \EndFor 
\Statex \textbf{Output:} $({\kappa}_n)_{n\ge1}$
\end{algorithmic}\label{Alg:exact}
\end{algorithm}
\end{minipage}
\end{center}

\vspace{5pt}

\begin{remark}[Alternative calculations of free cumulants]
A more common approach to compute the free cumulant $\kappa_n$ of a measure from its moments $m_1,\ldots,m_n$ is the following recursive formula:
\BE\label{Eqn:moment_cumulant_polynomial}
\kappa_n =m_n - \sum_{j=1}^{n-1} \prod_{\substack{k_1,\ldots,k_j\ge0\\ k_1+\cdots+k_j=n-j} } \kappa_j m_{k_1}\cdots m_{k_j}.
\EE
For a derivation of \eqref{Eqn:moment_cumulant_polynomial} from the moment-cumulant formula \eqref{Eqn:free_cumulant_def}, refer to \cite[Proposition 17]{mingo2017free}. The proposed method is computationally favorable than a naive implementation of \eqref{Eqn:moment_cumulant_polynomial}.

\end{remark}

%==============================================
\subsection{Monte Carlo Estimator of Free Cumulants}

The Onager term of the RI-AMP algorithm involves the free cumulants of the limiting spectral law $\mathsf{\Lambda}\sim\mu$. In applications, it is often the case that $\mu$ is not known and only a sample of $\bm{W}$ is available. In this case, we may simply replace the moments $(m_n)_{n\ge1}$ in Algorithm \ref{Alg:exact} or \eqref{Eqn:moment_cumulant_polynomial} by their estimates $(\hat{m}_n)_{n\ge1}$. Assuming that the empirical eigenvalue distribution of $\bm{W}$ converges weakly to a compactly supported distribution, we have $\frac{1}{N}\bm{g}^\UT \bm{W}^n\bm{g}\overset{a.s.}{\longrightarrow }\mathbb{E}[\mathsf{\Lambda}^n]=m_n$ where $\bm{g}\sim\mathcal{N}(\bm{0},\bm{I}_N)$. Motivated by this fact, we consider the following estimator of $m_n$:
\BE\label{Eqn:mk_estimate}
\hat{m}_n:=\frac{1}{N}\bm{g}^\UT \bm{W}^n\bm{g},\quad \bm{g}\sim\mathcal{N}(\bm{0},\bm{I}).
\EE
This estimator does not require eigenvalue decomposition of $\bm{W}$ and is thus computationally favorable for large-scale problems. Similar idea was proposed by \citet{liu2022memory} in the context of linear models.

\begin{center}  % center algorithm
\begin{minipage}{.7\linewidth}  % mini-page
\begin{algorithm}[H]
\caption{Monte Carlo estimator of free cumulants}
\begin{algorithmic}[1]

%\Procedure{Roy}{$a,b$}       \Comment{This is a test}
\Statex \textbf{Input: $\bm{W}$, $\bm{g}\sim\mathcal{N}(\bm{0},\bm{I}_N)$, $\bm{z}_0:=\bm{g}$, $\hat{\kappa}_0=1$, $\bm{h}:=\bm{Wg}$} 
    \For{$n=1,2,\ldots,$}  
    \State $\hat{\kappa}_n=\frac{1}{N}\bm{h}^\UT \bm{z}_{n-1}$
        \State $\bm{z}_n=\bm{Wz}_{n-1}-\sum_{i=1}^n \hat{\kappa}_i\cdot  \bm{z}_{n-i}$
    \EndFor
%\EndProcedure
\Statex \textbf{Output:} $(\hat{\kappa}_n)_{n\ge1}$
\end{algorithmic}\label{Alg:proposed}
\end{algorithm}
\end{minipage}
\end{center}

Here we propose an alternative Monte Carlo estimator of the free cumulants based on Proposition \ref{Lem:cumulants}. Let $(\bm{Q}_n)_{n\ge1}$ be a sequence of polynomials of $\bm{W}$ defined via (cf.~\eqref{Eqn:Q_def})
\BE\label{Eqn:Q_matrix_recursion}
\bm{Q}_n= \bm{WQ}_{n-1}-\sum_{i=1}^n \kappa_i\cdot \bm{Q}_{n-i},\quad\forall n\ge1
\EE
with $\bm{Q}_0=\bm{I}_N$. Similar to \eqref{Eqn:mk_estimate} we can estimate $\kappa_k$ as follows 
\[
\hat{\kappa}_n = \frac{1}{N}\bm{g}^\UT\bm{W}\bm{Q}_{n-1}\bm{g},\quad\bm{g}\sim\mathcal{N}(\bm{0}_N,\bm{I}_N).
\]
Note that we do not need to compute the matrices $(\bm{Q}_n)_{n\ge1}$ explicitly. An implementation of this idea is detailed in Algorithm \ref{Alg:proposed}.

\section{Orthogonal AMP Algorithm}\label{Sec:OAMP_SE}

Our approach to derive the RI-AMP algorithm is by reducing it (in a recursive fashion) to certain long-memory orthogonal AMP (OAMP) algorithm \cite{takeuchi2019unified} which has very simple state evolution characterization. In this section, we collect some existing results about the OAMP algorithm for later use.

\begin{definition}[Orthogonal AMP algorithm]\label{Def:LM_OAMP}
Starting from an initialization $\bar{\bm{x}}_1\in\mathbb{R}^N$, an orthogonal AMP algorithm  proceeds as follows:
\BS\label{Eqn:LM_OAMP_recall}
\begin{align}
\bm{x}_t &= \left(f_t(\bm{W})-\frac{\text{tr}\left(f_t(\bm{W})\right)}{N}\cdot\bm{I}_N\right)\bar{\bm{x}}_{t},\\
\bar{\bm{x}}_{t+1} &= g_{t+1} (\bm{x}_1,\ldots,\bm{x}_t;\bm{a})-\sum_{i=1}^t \langle\partial_i  g_{t+1} (\bm{x}_1,\ldots,\bm{x}_t;\bm{a})\rangle\cdot \bm{x}_i,\label{Eqn:LM_OAMP_recall_b}
\end{align}
\ES
where 
\begin{itemize}
\item $f_{t}:\mathbb{R}\mapsto\mathbb{R}$ is applied to $\bm{W}$ in the following sense: let $\bm{W}=\bm{O}\mathrm{diag}\left(\lambda_1,\ldots,\lambda_N\right)\bm{O}^\UT$ be the eigenvalue decomposition of $\bm{W}$, then $f_t(\bm{W}):=\bm{O}\mathrm{diag}\left(f_t(\lambda_1),f_t(\lambda_2),\ldots,f_t(\lambda_N)\right)\bm{O}^\UT$.
\item $g_{t+1}:\mathbb{R}^t\times\mathbb{R}^k\mapsto\mathbb{R}$ acts separately on the $N$ rows of $(\bm{x}_1,\ldots,\bm{x}_t;\bm{a})\in\mathbb{R}^{N\times t}\times\mathbb{R}^{N\times k}$, and $\langle\partial_i  g_{t+1} (\bm{x}_1,\ldots,\bm{x}_t;\bm{a})\rangle$ denotes the empirical average of the partial derivative of $g_{t+1}$ w.r.t. the $i$-th argument, i.e.,
\[
\langle\partial_i  g_{t+1} (\bm{x}_1,\ldots,\bm{x}_t;\bm{a})\rangle:=\frac{1}{N}\sum_{n=1}^N \frac{\partial g_{t+1}(\bm{x}_1[n],\ldots,\bm{x}_t[n];\bm{a}[n])}{\partial \bm{x}_i[n]}.
\]
Here, $\bm{a}\in\mathbb{R}^{N\times k}$ represents available side information, and $\bm{x}_i[n]\in\mathbb{R}$ and $\bm{a}[n]\in\mathbb{R}^{k}$ denote the $n$-th component of $\bm{x}_i$ and $n$-th row of $\bm{a}$ respectively. 
\end{itemize}
\end{definition}

\vspace{5pt}

Throughout this paper, we will use the following convergence of high-dimensional vectors. The reader is referred to \cite{bayati2011dynamics,fan2022approximate,feng2022unifying} for more information about this notion of convergence.

\begin{definition}[Convergence of high-dimensional vectors]\label{Eqn:W2_convergence}
Let $(\bm{z}_1,\ldots,\bm{z}_\ell)\in\mathbb{R}^{N\times \ell}$ be a collection of random vectors. We say its empirical distribution converges to random variables $(\mathsf{Z}_1,\ldots,\mathsf{Z}_\ell)$ as $N\to\infty$, which we denote as 
\[
(\bm{z}_1,\ldots,\bm{z}_\ell)\overset{W_2}{\longrightarrow}(\mathsf{Z}_1,\ldots,\mathsf{Z}_\ell),
\]
if for any test function $\psi:\mathbb{R}^{\ell}\mapsto\mathbb{R}$ satisfying
\BE
|\psi(\bm{a})-\psi(\bm{b})|\le L\|\bm{a}-\bm{b}\|_2(1+\|\bm{a}\|_2+\|\bm{b}\|_2),\quad\forall \bm{a},\bm{b}\in\mathbb{R}^{\ell},
\EE
the following holds as $N\to\infty$:
\BE
\frac{1}{N}\sum_{n=1}^N \psi\left(\bm{z}_1[n],\ldots,\bm{z}_\ell[n]\right)\overset{\mathbb{P}}{\longrightarrow}\mathbb{E}\left[\psi\left(\mathsf{Z}_1,\ldots,\mathsf{Z}_\ell\right)\right].
\EE
\end{definition}

The following assumptions are needed for the high-dimensional asymptotic characterization of OAMP.

\begin{assumption}[Assumptions for OAMP]\label{Ass:LM_OAMP}
$\ $
\begin{enumerate}
\item[(1)] Let $\bm{W}=\bm{O}\text{diag}(\bm{\lambda})\bm{O}^\UT$ be the eigenvalue decomposition of $\bm{W}$. We assume $\bm{O}\sim\mathsf{Unif}(\mathbb{O}(N))$ and $\bm{\lambda}\in\mathbb{R}^N$ is deterministic. Moreover, the empirical distribution of $\bm{\lambda}$ converges weakly to a compactly support probability measure $\mu$. Additionally, the operator norm of $\bm{W}$ is upper bounded by an $N$-independent constant $C$.
\item[(2)] The side information $\bm{a}\in\mathbb{R}^{N\times k}$ and the initialization $\bar{\bm{x}}_1$ are independent of $\bm{O}$. Moreover, $(\bar{\bm{x}}_1,\bm{a})\overset{W_2}{\longrightarrow}(\bar{\mathsf{X}}_1,\mathsf{A})$, where $\mathbb{E}[\bar{\mathsf{X}}_1^2]<\infty$ and $\mathbb{E}[\|\mathsf{A}\|^2]<\infty$.
\item[(3)] For all $t\ge1$, the matrix denoiser $f_t:\mathbb{R}\mapsto\mathbb{R}$ is continuous, and does not depend on $N$.
\item[(4)] For all $t\ge1$, the function $g_{t+1}:\mathbb{R}^t\times\mathbb{R}^k\mapsto\mathbb{R}$ is continuously-differentiable and Lipschitz, and does not depend on $N$.
\end{enumerate}
\end{assumption}

The high-dimensional asymptotic performance of OAMP admits a state evolution description. This was first conjectured in \cite{ma2017orthogonal} for the OAMP algorithm with univariate denoisers. A rigorous proof was provided by \cite{rangan2019vector,takeuchi2019rigorous} based on a generalization of the conditioning technique in \cite{bolthausen2014iterative,bayati2011dynamics} to Haar random orthogonal matrices. (The proof in \cite{rangan2019vector} was for the vector AMP (VAMP) algorithm. In this paper, the acronyms OAMP and VAMP are used interchangeably.) Generalizations of OAMP to the case of multivariate denoisers are introduced in \cite{takeuchi2019unified,dudeja2022spectral,liu2022memory}. The following theorem provides a state evolution characterization for the version of OAMP algorithm as defined in Definition \ref{Def:LM_OAMP}. Its proof, which is based on a simple modification of \cite{dudeja2022spectral}, is deferred to Appendix \ref{App:proof_OAMP}.

\begin{theorem}[State evolution of OAMP \cite{dudeja2022spectral}]\label{The:OAMP_SE}
Let $(\bm{x}_t)_{t\ge1}$ be generated via the OAMP algorithm. Suppose Assumption \ref{Ass:LM_OAMP} holds. Then, the following holds as $N\to\infty$:
\BS
\BE
(\bm{x}_1,\ldots,\bm{x}_t;\bm{a})\overset{W_2}{\longrightarrow}(\mathsf{X}_1,\ldots,\mathsf{X}_t;\mathsf{A}),\quad\forall t\ge1,
\EE
where $(\mathsf{X}_1,\ldots,\mathsf{X}_t)\sim\mathcal{N}(\bm{0}_t,\paraB{\Omega}_t)$ is independent of $(\bar{\mathsf{X}}_1,\mathsf{A})$, and
\BE\label{Eqn:SE_OAMP_first}
\paraB{\Omega}_t =
\begin{bmatrix}
\mathrm{Cov}_\mu\left[f_1,f_1\right]   &  \cdots & \mathrm{Cov}_\mu\left[f_1,f_t\right]   \\
\mathrm{Cov}_\mu\left[f_2,f_1\right]   &  \cdots & \mathrm{Cov}_\mu\left[f_2,f_t\right]  \\
\vdots& \ddots  & \vdots \\
\mathrm{Cov}_\mu\left[f_t,f_1\right]   &  \cdots & \mathrm{Cov}_\mu\left[f_t,f_t\right]   \\
\end{bmatrix}
\circ
\begin{bmatrix}
\mathbb{E}\left[\bar{\mathsf{X}}_1\bar{\mathsf{X}}_1\right]   &  \cdots & \mathbb{E}\left[\bar{\mathsf{X}}_1\bar{\mathsf{X}}_t\right]  \\
\mathbb{E}\left[\bar{\mathsf{X}}_2\bar{\mathsf{X}}_1\right]   &  \cdots &\mathbb{E}\left[\bar{\mathsf{X}}_2\bar{\mathsf{X}}_t\right]  \\
\vdots &\ddots  & \vdots \\
\mathbb{E}\left[\bar{\mathsf{X}}_t\bar{\mathsf{X}}_1\right]  & \cdots  & \mathbb{E}\left[\bar{\mathsf{X}}_t\bar{\mathsf{X}}_t\right] 
\end{bmatrix},
\EE
where $\mathrm{Cov}_\mu\left[f_i,f_j\right]$ denotes the covariance between the random variables $f_i(\mathsf{\Lambda})$ and $f_j(\mathsf{\Lambda})$ (with $\mathsf{\Lambda}\sim\mu$), and $\circ$ denotes Hadamard product and
\BE\label{Eqn:SE_OAMP_b}
\bar{\mathsf{X}}_{t}:=
g_{t} (\mathsf{X}_1,\ldots,\mathsf{X}_{t-1};\mathsf{A})-\sum_{i=1}^{t-1}  \mathbb{E}\left[\partial_i  g_{t} (\mathsf{X}_1,\ldots,\mathsf{X}_{t-1};\mathsf{A})\right]\cdot \mathsf{X}_{i},\quad\forall t\ge2.
\EE
In the above equation, $\mu$ denotes the limiting eigenvalue distribution of $\bm{W}$.
\ES
\end{theorem}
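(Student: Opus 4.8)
The plan is to prove this by induction on $t$, reducing the long-memory OAMP iteration in Definition~\ref{Def:LM_OAMP} to a form covered by the existing orthogonal-matrix AMP machinery of \citet{rangan2019vector,takeuchi2019rigorous}, following the reduction in \citet{dudeja2022spectral}. The two structural features that make this work are: (i) the matrix denoiser $f_t(\bm{W})$ is replaced by its \emph{trace-free} version $f_t(\bm{W}) - \frac{\tr f_t(\bm{W})}{N}\bm{I}_N$, so that applying it to $\bar{\bm{x}}_t$ produces a vector that is asymptotically \emph{orthogonal} to the subspace spanned by $\bar{\bm{x}}_t$ after conditioning on $\bm{O}$; and (ii) the iterate denoiser $g_{t+1}$ is made \emph{divergence-free} by subtracting $\sum_{i\le t}\langle \partial_i g_{t+1}\rangle \bm{x}_i$, which exactly cancels the Onsager correction that would otherwise appear. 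Concretely, I would write $\bm{W} = \bm{O}\diag(\bm\lambda)\bm{O}\tran$ with $\bm{O}\sim\unif{\ortho(N)}$, and at step $t$ condition on the sigma-algebra generated by the linear constraints $\bm{O}$ must satisfy to be consistent with $\bm{x}_1,\dots,\bm{x}_{t-1}$ and $\bar{\bm{x}}_1,\dots,\bar{\bm{x}}_t$ produced so far. Under this conditioning, the next matrix-vector product $f_t(\bm{W})\bar{\bm{x}}_t$ decomposes into a component lying in the already-revealed subspace plus a conditionally-Haar-distributed fresh component; the trace-free correction removes precisely the ``diagonal'' piece that would contribute a non-Gaussian term, leaving a contribution whose empirical law converges to a Gaussian.

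The key steps, in order, are as follows. First, I would set up the induction hypothesis: for each $s \le t$, the joint empirical distribution $(\bm{x}_1,\dots,\bm{x}_s;\bm{a}) \overset{W_2}{\longrightarrow} (\mathsf{X}_1,\dots,\mathsf{X}_s;\mathsf{A})$ with $(\mathsf{X}_1,\dots,\mathsf{X}_s)\sim\gauss{\bm{0}_s}{\paraB{\Omega}_s}$ independent of $(\bar{\mathsf{X}}_1,\mathsf{A})$, and moreover the covariance structure of $\paraB{\Omega}_s$ is consistent with \eqref{Eqn:SE_OAMP_first}; in particular the ``input'' vectors $\bar{\bm{x}}_1,\dots,\bar{\bm{x}}_s$ have a joint limit $(\bar{\mathsf{X}}_1,\dots,\bar{\mathsf{X}}_s)$ given by the divergence-free formula \eqref{Eqn:SE_OAMP_b}. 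Second, for the inductive step, I would invoke the conditioning lemma for Haar orthogonal matrices: conditioned on the linear span revealed by previous iterations, $f_t(\bm{W})\bar{\bm{x}}_t$ equals a deterministic linear combination of $\bm{W}^j\bar{\bm{x}}_i$-type directions already seen (more precisely, of the revealed vectors) plus an independent term that, after the trace-free correction and a concentration argument using the operator-norm bound $\|\bm{W}\|_{\op}\le C$ and compact support of $\mu$, is asymptotically Gaussian and orthogonal to the revealed subspace. Third, I would compute the limiting covariance: the $(i,j)$ entry of the covariance of $(\mathsf{X}_i,\mathsf{X}_j)$ is the limit of $\frac1N \bar{\bm{x}}_i\tran \big(f_i(\bm{W})-\tfrac{\tr f_i(\bm{W})}{N}\bm{I}\big)\big(f_j(\bm{W})-\tfrac{\tr f_j(\bm{W})}{N}\bm{I}\big)\bar{\bm{x}}_j$; using freeness-type / Haar-averaging identities (or directly the spectral calculus since $f_i(\bm{W}), f_j(\bm{W})$ are simultaneously diagonalizable in $\bm{O}$) this factorizes asymptotically into $\Cov_\mu[f_i,f_j]\cdot \E[\bar{\mathsf{X}}_i\bar{\mathsf{X}}_j]$, which is exactly the Hadamard-product formula \eqref{Eqn:SE_OAMP_first}. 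Fourth, I would verify that the divergence-free denoiser step closes the induction: applying $g_{t+1}$ rowwise and subtracting the empirical-divergence correction, the law-of-large-numbers for $W_2$-convergent sequences (continuity/Lipschitz assumptions (3)--(4)) gives $(\bm{x}_1,\dots,\bm{x}_t;\bar{\bm{x}}_{t+1};\bm{a}) \overset{W_2}{\longrightarrow} (\mathsf{X}_1,\dots,\mathsf{X}_t;\bar{\mathsf{X}}_{t+1};\mathsf{A})$ with $\bar{\mathsf{X}}_{t+1}$ as in \eqref{Eqn:SE_OAMP_b}, and crucially $\E[\partial_i g_{t+1}]$ is the correct coefficient so that no Onsager-type residual survives at the next matrix-multiply.

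Since the excerpt explicitly states the proof ``is based on a simple modification of \cite{dudeja2022spectral}'' and is deferred to an appendix, the write-up should emphasize exactly \emph{what} needs modifying rather than reproving the conditioning technique from scratch: namely, \cite{dudeja2022spectral} works with a particular class of denoisers arising in regularized least squares, whereas here $f_t$ is an arbitrary continuous spectral function and $g_{t+1}$ an arbitrary Lipschitz multivariate function with side information $\bm{a}$, so the main things to check are (a) that the trace-free and divergence-free corrections in Definition~\ref{Def:LM_OAMP} match the ones used in \cite{dudeja2022spectral} (up to this added generality), (b) that the side-information argument $\bm{a}$, being independent of $\bm{O}$, simply rides along through the conditioning as an extra deterministic covariate, and (c) that the continuity of $f_t$ plus compact support of $\mu$ suffices for the needed spectral-concentration statements. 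The main obstacle — and the part deserving the most care — is the conditioning/concentration argument in step two: making rigorous that, conditioned on the revealed subspace, the trace-free matrix product is asymptotically Gaussian and orthogonal, with the right variance, uniformly enough to survive the $W_2$-convergence bookkeeping; this is where the operator-norm bound and the compact-support assumption do the heavy lifting, and where one must be careful that the polynomial (or merely continuous) dependence of $f_t(\bm{W})$ on $\bm{W}$ interacts correctly with the Haar measure on $\bm{O}$.
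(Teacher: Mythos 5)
Your plan re-derives the state evolution from scratch via the Haar-conditioning technique, which is the route taken by the underlying references \cite{rangan2019vector,takeuchi2019rigorous,dudeja2022spectral}; the paper's appendix does something much lighter. It introduces an auxiliary iteration in which the empirical centering quantities $\frac{1}{N}\mathrm{tr}\left(f_t(\bm{W})\right)$ and $\langle\partial_i g_{t+1}(\bm{x}_1,\ldots,\bm{x}_t;\bm{a})\rangle$ are replaced by their deterministic limits $\mathbb{E}[f_t(\mathsf{\Lambda})]$ and $\mathbb{E}[\partial_i g_{t+1}(\mathsf{X}_1,\ldots,\mathsf{X}_t;\mathsf{A})]$, observes that this auxiliary iteration is literally an instance of the VAMP algorithm covered by \cite[Theorem 2]{dudeja2022spectral} (so the Gaussian limit and the Hadamard-product covariance come for free), and then proves by induction that $\frac{1}{N}\|\bm{x}_t-\bm{w}_t\|^2\to 0$ using the operator-norm bound on the centered spectral matrix, the Lipschitz continuity of $g_{t+1}$, and the convergence of the empirical trace and divergences to their limits. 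Re-proving the conditioning lemma would of course also work, but it is far more labor than the theorem requires here.

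The concrete gap in your proposal is that you never engage with the one issue the paper's proof is actually about: in Definition~\ref{Def:LM_OAMP} the trace-free and divergence-free corrections use \emph{empirical, random} coefficients that are correlated with $\bm{W}$ and with the iterates, whereas the conditioning machinery (and the cited Theorem~2 of \cite{dudeja2022spectral}) is stated for deterministic coefficients. You write that subtracting $\sum_i \langle\partial_i g_{t+1}\rangle\,\bm{x}_i$ ``exactly cancels the Onsager correction'' as if this were automatic, and you identify the needed ``modification'' of \cite{dudeja2022spectral} as extending the denoiser class — but the cited result already covers general spectral denoisers and Lipschitz multivariate $g_{t+1}$ with side information. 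What it does not cover is the empirical centering, and bridging that discrepancy (the bound \eqref{Eqn:OAMP_SE_app_error} together with $\|f_{t+1}(\bm{W})-\frac{\mathrm{tr}(f_{t+1}(\bm{W}))}{N}\bm{I}_N\|_{\op}\le C'$ and the inductive convergence $\langle\partial_i g_{t+1}\rangle\pc\mathbb{E}[\partial_i g_{t+1}]$) is the entire substance of the proof. Without that approximation step, or without carrying the random coefficients through the conditioning argument explicitly, your induction does not close.
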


\vspace{5pt}

\begin{remark}[Interpretation of OAMP's state evolution]
The state evolution equation \eqref{Eqn:SE_OAMP_first} makes precise the following approximation:
\BS\label{Eqn:decoupling_intro}
\begin{align}
\frac{1}{N}\bm{x}_s^\UT\bm{x}_t &\explain{\eqref{Eqn:LM_OAMP_recall}}{=} \frac{1}{N}\bar{\bm{x}}_s^\UT \left(f_s(\bm{W})-\frac{\text{tr}\left(f_s(\bm{W})\right)}{N}\cdot\bm{I}_N\right) \left(f_t(\bm{W})-\frac{\text{tr}\left(f_t(\bm{W})\right)}{N}\cdot\bm{I}_N\right)\bar{\bm{x}}_t,\quad \forall s,t\ge1\\
&\simeq \frac{1}{N}\mathsf{tr} \left(f_s(\bm{W})-\frac{\text{tr}\left(f_s(\bm{W})\right)}{N}\cdot\bm{I}_N\right) \left(f_t(\bm{W})-\frac{\text{tr}\left(f_t(\bm{W})\right)}{N}\cdot\bm{I}_N\right)\cdot \frac{1}{N}\bar{\bm{x}}_s^\UT\bar{\bm{x}}_t.
\end{align}
\ES
Standard random matrix results show that the above approximation is exact (under mild conditions) in the large-$N$ limit, if $\bar{\bm{x}}_t$ and $\bar{\bm{x}}_s$ were independent of the noise matrix $\bm{W}$. Therefore, for a heuristic interpretation of the state evolution, we may treat the iterates as if they are independent of $\bm{W}$. This property significantly simplifies the dynamics of OAMP algorithms and renders the state evolution easily interpretable.
\end{remark}

A distinguishing feature of OAMP algorithm is the pairwise asymptotic orthogonality between the iterates $\bm{x}_s$ and $\bar{\bm{x}}_t$, $\forall s,t\ge1$. This orthogonality property justifies the name ``orthogonal AMP''.

\begin{proposition}[Pairwise orthogonality]\label{Pro:orthogonality}
The following holds as $N\to\infty$,
\BE\label{Eqn:OAMP_orthogonality}
\underset{N\to\infty}{\mathrm{plim}}\ \frac{1}{N}\bm{x}_s^\UT\bar{\bm{x}}_t=0,\quad \forall s,t\ge1.
\EE
\end{proposition}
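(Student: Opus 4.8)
The plan is to reduce the claim to the state evolution in \thref{The:OAMP_SE}, which already identifies the joint limiting law of the iterates $(\bm{x}_1,\ldots,\bm{x}_t;\bm{a})$. First I would observe that $\frac{1}{N}\bm{x}_s^\UT \bar{\bm{x}}_t$ is a bilinear form in the rows of the concatenated array $(\bm{x}_1,\ldots,\bm{x}_{t-1};\bm{a})$ together with $\bm{x}_s$, because by definition \eqref{Eqn:LM_OAMP_recall_b} the vector $\bar{\bm{x}}_t$ is an entrywise (Lipschitz) function of $(\bm{x}_1,\ldots,\bm{x}_{t-1};\bm{a})$ minus a linear combination of the $\bm{x}_i$'s with $i\le t-1$. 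The function $(\bm{u},\bm{v})\mapsto u\,v$ is not itself a valid pseudo-Lipschitz test function of order $2$ globally, but $\frac{1}{N}\bm{x}_s^\UT\bar{\bm{x}}_t = \frac{1}{N}\sum_n \psi(\bm{x}_1[n],\ldots,\bm{x}_{t-1}[n],\bm{x}_s[n];\bm{a}[n])$ where $\psi$ is a product of $x_s[n]$ with a Lipschitz function of the remaining coordinates; this is a polynomially-bounded test function whose action is controlled by $W_2$-convergence together with the uniform $L^2$ bounds supplied by the state evolution (all the limiting variables $\mathsf{X}_i, \mathsf{A}$ have finite second moments, and a standard truncation argument upgrades $W_2$-convergence to convergence of such products; this is the routine step I would not grind through).

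Given that, \thref{The:OAMP_SE} yields
\[
\underset{N\to\infty}{\mathrm{plim}}\ \frac{1}{N}\bm{x}_s^\UT\bar{\bm{x}}_t = \mathbb{E}\left[\mathsf{X}_s\,\bar{\mathsf{X}}_t\right],
\]
where $\bar{\mathsf{X}}_t$ is defined by \eqref{Eqn:SE_OAMP_b}. So it remains to show $\mathbb{E}[\mathsf{X}_s\,\bar{\mathsf{X}}_t]=0$ for all $s\ge1$ and $t\ge2$ (the case $t=1$ is handled separately: $\bar{\bm{x}}_1$ is the initialization, independent of $\bm{O}$, and $\bm{x}_s$ is a trace-free matrix polynomial applied to an $\bm{O}$-free vector times $\bar{\bm{x}}_s$, so $\frac{1}{N}\bm{x}_s^\UT\bar{\bm{x}}_1 \to 0$ by the decoupling approximation of \eqref{Eqn:decoupling_intro} specialized to $\bar{\bm{x}}_t \equiv \bar{\bm{x}}_1$, whose matrix factor is trace-free). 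Plugging in \eqref{Eqn:SE_OAMP_b},
\[
\mathbb{E}[\mathsf{X}_s\,\bar{\mathsf{X}}_t] = \mathbb{E}\!\left[\mathsf{X}_s\, g_t(\mathsf{X}_1,\ldots,\mathsf{X}_{t-1};\mathsf{A})\right] - \sum_{i=1}^{t-1}\mathbb{E}\!\left[\partial_i g_t(\mathsf{X}_1,\ldots,\mathsf{X}_{t-1};\mathsf{A})\right]\cdot \mathbb{E}[\mathsf{X}_s\,\mathsf{X}_i].
\]
Now I would invoke Gaussian integration by parts (Stein's lemma): since $(\mathsf{X}_1,\ldots,\mathsf{X}_{t-1})$ is jointly Gaussian and independent of $\mathsf{A}$, for $s\le t-1$ we have $\mathbb{E}[\mathsf{X}_s\, g_t] = \sum_{i=1}^{t-1}\mathbb{E}[\mathsf{X}_s\mathsf{X}_i]\,\mathbb{E}[\partial_i g_t]$, which exactly cancels the Onsager-type sum; hence $\mathbb{E}[\mathsf{X}_s\,\bar{\mathsf{X}}_t]=0$ for $s\le t-1$.

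The remaining, and genuinely more delicate, case is $s\ge t$, where Stein's lemma does not immediately apply because $\mathsf{X}_s$ is not among the arguments of $g_t$. Here I would argue by induction on $\max(s,t)$, using the structure of the OAMP recursion: $\mathsf{X}_s = (f_s(\mathsf{\Lambda}) - \mathbb{E}[f_s(\mathsf{\Lambda})])\,\bar{\mathsf{X}}_s$ in the decoupled picture (the covariance structure \eqref{Eqn:SE_OAMP_first} is precisely a Hadamard product of a \emph{centered} spectral covariance with $\mathbb{E}[\bar{\mathsf{X}}_a\bar{\mathsf{X}}_b]$), so $\mathbb{E}[\mathsf{X}_s\,\mathsf{X}_i] = \mathrm{Cov}_\mu[f_s,f_i]\cdot \mathbb{E}[\bar{\mathsf{X}}_s\bar{\mathsf{X}}_i]$ and $\mathbb{E}[\mathsf{X}_s\,\bar{\mathsf{X}}_t]$ can be related via the state evolution back to $\mathbb{E}[\bar{\mathsf{X}}_s\,\bar{\mathsf{X}}_t]$-type quantities with strictly smaller index. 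Concretely: express $\bar{\mathsf{X}}_t$ through \eqref{Eqn:SE_OAMP_b} and then, when the larger index is on the $\bar{\mathsf{X}}_s$ side, peel off one layer of the recursion for $\bar{\mathsf{X}}_s$ and apply Stein's lemma with $\mathsf{X}_t$ (now an argument of $g_s$ since $t\le s$ — wait, need $t-1\ge$ the argument count, so $t\le s$ suffices after a shift) playing the symmetric role, reducing to $\mathbb{E}[\mathsf{X}_t\,\bar{\mathsf{X}}_s]$ with the same pair but now in the already-handled regime, or to lower-order terms killed by the divergence-free correction. I expect the bookkeeping of \emph{which} index is peeled — ensuring the induction is well-founded and that the Onsager subtraction in \eqref{Eqn:SE_OAMP_b} cancels exactly the Stein term rather than leaving a remainder — to be the main obstacle; everything else (the $W_2$ test-function argument, the $t=1$ base case, Stein's lemma itself) is standard. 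An alternative to the induction is to unfold both $\bar{\mathsf{X}}_s$ and $\bar{\mathsf{X}}_t$ fully into polynomials of the jointly Gaussian family and the centered spectral variable and verify the orthogonality directly from \eqref{Eqn:SE_OAMP_first}, but the inductive route keeps the combinatorics lighter.
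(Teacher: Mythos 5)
Your reduction to the state evolution limit is the right move and matches the paper's proof: by Theorem~\ref{The:OAMP_SE} the normalized inner product converges to $\mathbb{E}[\mathsf{X}_s\,\bar{\mathsf{X}}_t]$, and the test function $x_s\cdot\bigl(g_t(x_{1:t-1};a)-\sum_i c_i x_i\bigr)$ is a product of two Lipschitz functions, hence pseudo-Lipschitz of order $2$ and admissible for Definition~\ref{Eqn:W2_convergence} without any extra truncation. The gap is in what you call the ``genuinely more delicate'' case $s\ge t$: it is not delicate, and the inductive machinery you sketch for it is both unnecessary and not actually carried out (you yourself flag the index bookkeeping as unresolved mid-sentence). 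The multivariate Stein identity you need is
\[
\mathbb{E}\bigl[\mathsf{X}_s\, g_t(\mathsf{X}_1,\ldots,\mathsf{X}_{t-1};\mathsf{A})\bigr]
=\sum_{i=1}^{t-1}\mathbb{E}[\mathsf{X}_s\mathsf{X}_i]\,\mathbb{E}\bigl[\partial_i g_t(\mathsf{X}_1,\ldots,\mathsf{X}_{t-1};\mathsf{A})\bigr],
\]
and this holds for \emph{any} zero-mean Gaussian vector $(\mathsf{X}_s,\mathsf{X}_1,\ldots,\mathsf{X}_{t-1})$ independent of $\mathsf{A}$ --- the variable $\mathsf{X}_s$ is not required to be an argument of $g_t$. (Write $\mathsf{X}_s=\sum_{i}c_i\mathsf{X}_i+\mathsf{Z}^{\perp}$ with $\mathsf{Z}^{\perp}$ independent of the arguments, or simply quote the standard form $\mathbb{E}[Z_0 f(\bm{Z})]=\sum_j \mathrm{Cov}(Z_0,Z_j)\,\mathbb{E}[\partial_j f(\bm{Z})]$.) Since Theorem~\ref{The:OAMP_SE} gives joint Gaussianity of $(\mathsf{X}_1,\ldots,\mathsf{X}_{\max(s,t-1)})$, this one identity disposes of all $s,t$ at once; this is exactly how the paper's three-line proof works, and it is why the paper's Remark following the proposition emphasizes that the orthogonality persists even when $\bm{x}_s$ is not an input of $g_t$.

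Two smaller points. For $t=1$ your decoupling heuristic is not a proof as stated; the clean route is to note that $\bar{\mathsf{X}}_1$ sits on the $(\bar{\mathsf{X}}_1,\mathsf{A})$ side of the limit, which Theorem~\ref{The:OAMP_SE} makes independent of the zero-mean Gaussian $\mathsf{X}_s$, so $\mathbb{E}[\mathsf{X}_s\bar{\mathsf{X}}_1]=0$ directly. And the identification $\mathsf{X}_s=(f_s(\mathsf{\Lambda})-\mathbb{E}[f_s(\mathsf{\Lambda})])\bar{\mathsf{X}}_s$ in your induction is only a heuristic reading of the covariance formula \eqref{Eqn:SE_OAMP_first}, not a distributional identity you can manipulate; had the induction been necessary, this step would have needed justification.
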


\begin{proof}
The case $t=1$ is straightforward. We consider the case $t>1$. From Theorem \ref{The:OAMP_SE}, the joint empirical distributions of $\bm{x}_s$ and $(\bm{x}_1,\ldots,\bm{x}_{t-1};\bm{a})$ converges in the sense of Definition \ref{Eqn:W2_convergence}. Consequently,
\begin{align*}
\underset{N\to\infty}{\mathrm{plim}}\ \frac{1}{N}\bm{x}_s^\UT\bar{\bm{x}}_t&\explain{(a)}{=} \underset{N\to\infty}{\mathrm{plim}}\ \frac{1}{N}\bm{x}_s^\UT\left(g_{t} (\bm{x}_1,\ldots,\bm{x}_{t-1};\bm{a})-\sum_{i=1}^{t-1} \langle\partial_i  g_{t+1} (\bm{x}_1,\ldots,\bm{x}_{t-1};\bm{a})\rangle\cdot \bm{x}_i\right)\\
&\explain{(b)}{=}\mathbb{E}\left[\mathsf{X}_s g_t(\mathsf{X}_1,\ldots,\mathsf{X}_{t-1};\mathsf{A})\right]-\sum_{i=1}^{t-1}\mathbb{E}\left[\partial_i g_t(\mathsf{X}_1,\ldots,\mathsf{X}_{t-1};\mathsf{A})\right]\cdot \mathbb{E}[\mathsf{X}_s \mathsf{X}_i ]\\
&\explain{(c)}{=}0,
\end{align*}
where step (a) is from the definition of $\bar{\bm{x}}_t$ in \eqref{Eqn:LM_OAMP_recall_b}; step (b) is due to the convergence result in Theorem \ref{The:OAMP_SE} and the fact that $\partial_i g_t$ is continuous and bounded (from the assumption that $g_t$ is Lipschitz and continuously-differentiable); step (c) is from the multivariate version of Stein's lemma.
\end{proof}

\begin{remark}
When $s\ge t$, $\bm{x}_s$ is not an immediate input of $\bar{\bm{x}}_t:=g_{t} (\bm{x}_1,\ldots,\bm{x}_{t-1};\bm{a})-\sum_{i=1}^{t-1} \langle\partial_i  g_{t+1} (\bm{x}_1,\ldots,\bm{x}_{t-1};\bm{a})\rangle\cdot \bm{x}_i$. Note that the orthogonality property \eqref{Eqn:OAMP_orthogonality} still holds in such cases.
\end{remark}

\vspace{5pt}

\begin{remark}[Orthogonal decomposition]
We can decompose ${g}_{t+1}(\bm{x}_1,\ldots,\bm{x}_t;\bm{a})$ as
\BE\label{Eqn:orthogonal_decomposition}
{g}_{t+1}(\bm{x}_1,\ldots,\bm{x}_t;\bm{a}) = \sum_{i=1}^t \langle\partial_i  g_{t+1} (\bm{x}_1,\ldots,\bm{x}_t;\bm{a})\rangle\cdot \bm{x}_i + \bar{\bm{x}}_{t+1},
\EE
where $\bar{\bm{x}}_{t+1}=g_{t+1} (\bm{x}_1,\ldots,\bm{x}_t;\bm{a})-\sum_{i=1}^t \langle\partial_i  g_{t+1} (\bm{x}_1,\ldots,\bm{x}_t;\bm{a})\rangle\cdot \bm{x}_i$. Proposition \ref{Pro:orthogonality} guarantees that $\bar{\bm{x}}_{t+1}$ is asymptotically orthogonal to $\bm{x}_i$, $\forall i\in[t]$. Namely, \eqref{Eqn:orthogonal_decomposition} is a decomposition of ${g}_{t+1}(\bm{x}_1,\ldots,\bm{x}_t;\bm{a})$ into a component that lies in $\text{span}\{\bm{x}_1,\ldots,\bm{x}_t\}$ and a component perpendicular to it asymptotically.
\end{remark}

\vspace{5pt}

OAMP algorithms have simple and interpretable state evolution, thanks to the use of trace-free and divergence-free denoisers. Moreover, with the introduction of general long-memory processing, the OAMP framework is very flexible and allows for convenient derivation of AMP algorithms (as we show in subsequent sections). These attributes make OAMP a unified and simple approach for deriving AMP algorithms tailored to rotationally-invariant models.

%---------------------------- % --------------------------
%---------------------------- % --------------------------
%---------------------------- % --------------------------
%---------------------------- % --------------------------
\section{Main Results}\label{sec:AMP_single}

This section presents the main results of this paper. We provide a unified way of deriving the AMP algorithms for rotationally-invariant models.

\subsection{Rotationally-Invariant AMP Algorithm}\label{Sec:RI-AMP}

The starting point of our derivation of RI-AMP algorithm is the first order method (FOM) defined below. We remark that FOM has the same form as RI-AMP, but the de-biasing coefficients in FOM are free parameters and not necessarily set as in RI-AMP.

\begin{definition}[First order method (FOM)]\label{Def:FOM}
Starting from an initialization $\bm{u}_1\in\mathbb{R}^N$, a first order method (FOM) proceeds as
\BS\label{eq:ZFAMP_general_def}
\begin{align}
\vr_t &= \mW\vu_t - \left(\para{b}_{t,1}\bm{u}_1+\para{b}_{t,2}\bm{u}_2+\cdots+\para{b}_{t,t}\bm{u}_t\right), \quad\forall t\in\mathbb{N},\label{eq:ZFAMP_a}\\
\vu_{t+1} &= \eta_{t+1} (\vr_1,\ldots,\vr_t),
\end{align}
\ES
where $\eta_{t+1}:\mathbb{R}^t\mapsto\mathbb{R}$ acts separately on the $N$ rows of $(\bm{r}_1,\ldots,\bm{r}_t)\in\mathbb{R}^{N\times t}$. 
\end{definition}

For convenience, let $\paraB{B}_t\in\mathbb{R}^{t\times t}$ be the matrix collecting the de-biasing coefficients $(\para{b}_{t,i})_{1\le t,1\le i\le t}$ as
\BE\label{Eqn:B_t_def}
\paraB{B}_t:=
\begin{bmatrix}
\para{b}_{1,1}& & & \\
\para{b}_{2,1}& \para{b}_{2,2} & & \\
\vdots & \vdots & \ddots & \\
\para{b}_{t,1}& \para{b}_{t,2}& \cdots & \para{b}_{t,t}
\end{bmatrix}.
\EE 
The rotationally-invariant AMP algorithm is a specific way of setting the de-biasing matrix $\paraB{B}_t$ in order to make the iterates $(\bm{r}_t)_{t\ge1}$ asymptotically Gaussian. We can now define the RI-AMP algorithm formally.

\begin{definition}[RI-AMP algorithm \citep{fan2022approximate}]\label{Def:RI-AMP}
The rotationally-invariant AMP (RI-AMP) algorithm is a first order method (FOM) with the de-biasing coefficients matrix $\paraB{B}_t$ set to
\BE\label{Eqn:RI-AMP_B_Phi}
\paraB{B}_t = \sum_{i=1}^{t} \kappa_{i} (\hat{\paraB{\Phi}}_t)^{i-1},
\EE
where $(\kappa_i)_{i\ge1}$ are the free cumulants of the limiting spectral law $\mu$, and the matrix $\hat{\paraB{\Phi}}_t\in\mathbb{R}^{t\times t}$ collects the empirical partial derivatives of $(\eta_i)_{i\in[t]}$ (i.e., ``divergences"):
\BE\label{Eqn:Phi_def}
\hat{\paraB{\Phi}}_t :=
\begin{bmatrix}
0&  &  &  &  \\
\langle\partial_1 \bm{u}_2\rangle& 0 &  &  & \\
\langle\partial_1 \bm{u}_3\rangle& \langle\partial_2 \bm{u}_3 \rangle &0 & &\\
\vdots & \vdots & \ddots &  &  \\
\langle\partial_1 \bm{u}_t\rangle & \langle\partial_2 \bm{u}_t\rangle & \cdots & \langle\partial_{t-1} \bm{u}_t\rangle  & 0
\end{bmatrix}.
\EE
\end{definition}

\begin{remark}[R-transform]
Recall that the free cumulants $(\kappa_i)_{i\ge1}$ are the coefficients in the power series expansion of the R-transform \cite{nica2006lectures}. Consequently, the de-biasing matrix formula \eqref{Eqn:RI-AMP_B_Phi} can be expressed in terms of the R-transform, as employed in the work of \citet{opper2016theory}. The R-transform emerges in \citet{opper2016theory} as a result of evaluating certain high-dimensional integrals asymptotically. In the present work, free cumulants arise naturally within a recursive centering operation.
\end{remark}

\begin{remark}[Reduction to standard Gaussian AMP]
For the special iid Gaussian model, namely when $\bm{W}$ is drawn from the Gaussian orthogonal Ensemble (GOE), the RI-AMP algorithm simplifies significantly. In this case, the free cumulants vanish except for $\kappa_2$: 
\[
(\kappa_1,\kappa_2,\kappa_3,\kappa_4,\ldots)=(0,1,0,0,\ldots).
\]
Hence, the de-biasing matrix formula \eqref{Eqn:RI-AMP_B_Phi} reduces to $\paraB{B}_t = \hat{\paraB{\Phi}}_t$, and RI-AMP reduces to the standard (long-memory) Gaussian AMP.
\end{remark}

\vspace{5pt}

Following \citep{fan2022approximate}, we make the following assumptions in order to analyze the asymptotic performance of RI-AMP.

\begin{assumption}[Assumptions for RI-AMP]\label{Ass:RI-AMP}
$\ $
\begin{enumerate}
\item[(1)] Let $\bm{W}=\bm{O}\text{diag}(\bm{\lambda})\bm{O}^\UT$ be the eigenvalue decomposition of $\bm{W}$. We assume $\bm{O}\sim\mathsf{Unif}(\mathbb{O}(N))$ and $\bm{\lambda}\in\mathbb{R}^N$ is deterministic. Moreover, the empirical distribution of $\bm{\lambda}$, which is assumed to be deterministic, converges weakly to a compactly support probability measure $\mu$. Additionally, the operator norm of $\bm{W}$ is upper bounded by an $N$-independent constant $C$.
\item[(2)] The initialization $\bm{u}_1\in\mathbb{R}^N$ is independent of $\bm{O}$. Moreover, $\bm{u}_1\overset{W_2}{\longrightarrow} U_1$, where the random variable $U_1$ has moments with all orders.
\item[(3)] For all $t\ge1$, the function $\eta_t:\mathbb{R}^t\mapsto\mathbb{R}$ is Lipschitz continuous.
\end{enumerate}

\end{assumption}

\subsection{Overview of Our Approach}\label{Sec:overview}

Before presenting the technical details, we first give a brief overview of our approach to AMP algorithms for rotationally-invariant models. Recall that the RI-AMP algorithm has the following form:
\BS\label{eq:ZFAMP_recall}
\begin{align}
\vr_t &= \mW\vu_t - \left(\para{b}_{t,1}\bm{u}_1+\para{b}_{t,2}\bm{u}_2+\cdots+\para{b}_{t,t}\bm{u}_t\right),\\
\vu_{t+1} &= \eta_{t+1}(\vr_t).
\end{align}
\ES
For ease of discussion, here we assumed that $\eta_t(\cdot)$ is a single-iterate function that only depends on $\bm{r}_t$. The question we aim to address is:
\begin{itemize}
\item \textit{How to derive the correct de-biasing coefficients $(\para{b}_{t,i})_{1\le t,1\le i\le t}$ such that $(\bm{r}_t)_{t\ge1}$ are asymptotically Gaussian distributed?}
\end{itemize}
Our approach to this question is based on an {orthogonal decomposition} idea first introduced by \citet{dudeja2022spectral}. Specifically, we decompose the iterate $\bm{u}_{t+1}:=\eta_{t+1}(\bm{r}_t)$, for each $t\ge1$, into a component that is parallel to the input vector $\bm{r}_t$ and a component orthogonal to it (denoted as $\bar{\bm{u}}_{t+1}$). For the more general scenario where $\eta_{t+1}(\cdot)$ depends on all past iterates $(\bm{r}_1,\ldots,\bm{r}_t)$, this orthogonal decomposition idea can be generalized naturally: we decompose $\eta_{t+1}(\bm{r}_1,\ldots,\bm{r}_t)$ into a component that lies in $\text{span}(\bm{r}_1,\ldots,\bm{r}_t)$ and a component orthogonal to it. (Notice that if the input vectors $(\bm{r}_1,\ldots,\bm{r}_t)$ are jointly Gaussian, the residual orthogonal component $\bar{\bm{u}}_{t+1}$ is nothing but the divergence-free estimate defined in Section \ref{Sec:OAMP_SE}.) Recursively unfolding the algorithm using this orthogonal decomposition, it is possible to represent the iterates $(\bm{r}_t)_{t\ge1}$ as linear combinations of the orthogonal components $(\bar{\bm{u}}_{t})_{t\ge1}$. This representation is already close to an OAMP algorithm defined in Section \ref{Sec:OAMP_SE}. It turns out that, by properly choosing the de-biasing coefficients, it is always possible to make the matrices appeared in this linear representation of $(\bar{\bm{u}}_{t})_{t\ge1}$ asymptotically trace-free, thereby casting the algorithm into an OAMP algorithm. The asymptotic Gaussian distribution then follows from the general property of OAMP.

We illustrate the idea by considering the first few iterations. In our discussions, we will encounter a centering operation that makes a matrix asymptotically trace-free, and we shall denote this centering operation by $\mathcal{T}$. Specifically, letting $f:\mathbb{R}\mapsto\mathbb{R}$ be a matrix denoising function which acts on input matrices as in \eqref{Eqn:LM_OAMP_recall}, we denote
\BE
\mathcal{T}\left(\bm{A}\right):=\bm{A}-\mathbb{E}_{\mathsf{\Lambda}\sim\mu}[f(\mathsf{\Lambda})]\cdot\bm{I}_N,\quad \text{where }\bm{A}:=f(\bm{W}).
\EE
We now consider the first iteration of the algorithm. From \eqref{eq:ZFAMP_recall}, we have $\bm{r}_1=\left(\bm{W} - \para{b}_{1,1}\bm{I}_N\right)\bm{u}_1$. By setting $ \para{b}_{1,1}=m_1:=\mathbb{E}[\mathsf{\Lambda}]$, we have
\BE\label{Eqn:AMP_r1}
\bm{r}_1=\mathcal{T}(\bm{W})\bm{u}_1:=Q_1(\bm{W})\bm{u}_1.
\EE
This guarantees $\bm{r}_1$ to be asymptotically Gaussian by Theorem \ref{The:OAMP_SE} in Section \ref{Sec:OAMP_SE}. Next, we show how to choose $\para{b}_{2,1}$ and $\para{b}_{2,2}$ to make $\bm{r}_2$ asymptotically Gaussian. Our approach is based on an {orthogonal decomposition} of $\bm{u}_1$:
\BE\label{Eqn:AMP_u2}
\bm{u}_2:=\para{d}_1\cdot\bm{r}_1 + \bar{\bm{u}}_{2},
\EE
where the first term is parallel to $\bm{r}_1$ and the second component orthogonal to $\bm{r}_1$ (in certain asymptotic sense). When $\bm{r}_1$ is asymptotically Gaussian, by appealing to Stein's lemma, the choice $\para{d}_1:=\left\langle \partial_1 \bm{u}_{2}\right\rangle$ satisfies the asymptotic orthogonality condition. The orthogonal residual $\bar{\bm{u}}_{2}:=\eta_1(\bm{r}_1)-\para{d}_1\cdot\bm{r}_1$ is a divergence-free function of $\bm{r}_1$. {For notational simplicity, we assume $\left\langle \partial_1 \bm{u}_{2}\right\rangle=1$ in the following discussions.} Now, substituting \eqref{Eqn:AMP_u2} into \eqref{eq:ZFAMP_recall} and denoting $\bar{\bm{u}}_1:=\bm{u}_1$ yields
\BS
\begin{align*}
\bm{r}_2&=\bm{W u}_2-(\para{b}_{2,1}\bm{u}_1+\para{b}_{2,2}\bm{u}_2)\\
&\explain{\eqref{Eqn:AMP_u2}}{=}\bm{W}\left(\bm{r}_1 + \bar{\bm{u}}_{2}\right)-\para{b}_{2,1}\bar{\bm{u}}_{1}-\para{b}_{2,2}\left(\bm{r}_1+\bar{\bm{u}}_2\right)\\
&\explain{\eqref{Eqn:AMP_r1}}{=}{\left(\bm{W}Q_1(\bm{W})-\para{b}_{2,2}Q_1(\bm{W})-\para{b}_{2,1}\bm{I}\right)}\bar{\bm{u}}_1+{\left(\bm{W}-\para{b}_{2,2}\bm{I}\right)}\bar{\bm{u}}_2\\
&=\underbrace{\mathcal{T}\big(\bm{W}Q_1(\bm{W})-\para{b}_{2,2}Q_1(\bm{W})-\para{b}_{2,1}\bm{I}\big)}_{Q_2(\bm{W})}\bar{\bm{u}}_1+\underbrace{\mathcal{T}\left(\bm{W}-\para{b}_{2,2}\bm{I}\right)}_{Q_1(\bm{W})}\bar{\bm{u}}_2,
\end{align*}
\ES
where the last step holds when $\para{b}_{2,1}$ and $\para{b}_{2,2}$ are chosen to center the respective matrices:
\BE\label{Eqn:r2}
\begin{split}
\para{b}_{2,2} &=\lim_{N\to\infty} \frac{1}{N}\text{tr}\left(\bm{W}\right)=m_1,\\
\para{b}_{2,1} &=\lim_{N\to\infty} \frac{1}{N}\text{tr}\left(\bm{W}Q_1(\bm{W})-\para{b}_{2,2}Q_1(\bm{W})\right)=\lim_{N\to\infty} \frac{1}{N}\text{tr}\left(\bm{W}Q_1(\bm{W})\right)\explain{\eqref{Eqn:AMP_r1}}{=}m_2-m_1^2.
\end{split}
\EE
Again, by Theorem \ref{The:OAMP_SE} in Section \ref{Sec:OAMP_SE}, vectors in the form of $P(\bm{W})\bar{\bm{u}}$ are asymptotically Gaussian with $P(\cdot)$ a polynomial and $P(\bm{W})$ \emph{trace-free}, and $\bar{\bm{u}}$ \emph{divergence-free}. Hence, both $Q_2(\bm{W})\bar{\bm{u}}_1$ and $Q_1(\bm{W})\bar{\bm{u}}_2$ are asymptotically Gaussian, and so is $\bm{r}_2=Q_2(\bm{W})\bar{\bm{u}}_1+Q_1(\bm{W})\bar{\bm{u}}_2$. 

We can continue this process. In each step, we decompose the estimate $\eta_t(\bm{r}_t)$ as a linear term plus a divergence-free term. {To illustrate the main idea and simplify discussions, we assume that $\langle \partial_t(\bm{u}_{t+1})\rangle=1$ $\forall t\ge1$ in this section.} It is not difficult to show that, under proper choices of the de-biasing coefficients, we could represent $(\bm{r}_t)_{t\ge1}$ as:
\BE\label{Eqn:r1_rt}
\begin{split}
\bm{r}_1 &=Q_1(\bm{W})\bar{\bm{u}}_1,\\
\bm{r}_2 &=Q_2(\bm{W})\bar{\bm{u}}_1+Q_1(\bm{W})\bar{\bm{u}}_2,\\
\bm{r}_3 &=Q_3(\bm{W})\bar{\bm{u}}_1+Q_2(\bm{W})\bar{\bm{u}}_2 +Q_1(\bm{W})\bar{\bm{u}}_3,\\
&\qquad\qquad \vdots\\
\bm{r}_t &=Q_t(\bm{W})\bar{\bm{u}}_1+Q_{t-1}(\bm{W})\bar{\bm{u}}_2 +Q_{t-2}(\bm{W})\bar{\bm{u}}_3+\cdots +Q_1(\bm{W})\bar{\bm{u}}_t,
\end{split}
\EE
where $\bar{\bm{u}}_1:=\bm{u}_1$. In the above display, $({Q}_t)_{t\ge1}$ is a sequence of polynomials that have zero-mean w.r.t. $\mu$ (i.e., $Q_t(\bm{W})$ is trace-free) and they satisfy certain recursive relationship. 

In our approach, the way to set the de-biasing coefficients is conceptually very simple: \textit{we choose $\paraB{B}_t$ to center the matrices in the representations of $(\bm{r}_t)_{t\ge1}$ as linear combinations of $(\bar{\bm{u}}_t)_{t\ge1}$}. We shall prove in the next subsections that such choice of $\paraB{B}_t$ is unique, and is precisely the one used in RI-AMP.

%--------------------- % ----------------------------%
%--------------------- % ----------------------------%
%--------------------- % ----------------------------%

\subsection{Derivation of Rotationally-Invariant AMP}\label{Sec:main}

In this section, we present our approach to derive the RI-AMP algorithm. The basic idea is to start with a FOM and figure out the correct de-biasing matrix such that the FOM can be recursively reduced to certain OAMP algorithm.

Similar to the strategy used in Section \ref{Sec:overview}, we introduce a set of intermediate variables $(\bar{\bm{u}}_t)_{t\ge1}$. We reformulate the iterates $(\bm{r}_t)_{t\ge1}$ in a FOM as linear combinations of $(\bar{\bm{u}}_t)_{t\ge1}$, which we record in Lemma \ref{Lem:RI_AMP_reformulation} below. We emphasize that the results in this lemma are purely \textit{algebraic}. In particular, the quantities $(\para{d}_{t,i})$ are understood as free parameters in this lemma and need not be the divergences of the de-noisers.

\begin{lemma}[Reformulation of FOM]\label{Lem:RI_AMP_reformulation}
Let $(\bm{u}_t)_{t\ge1}$ and $(\bm{r}_t)_{t\ge1}$ be generated as in \eqref{eq:ZFAMP_general_def}. Let $(\para{d}_{t,i})_{1\le t,1\le i\le t}$ be an arbitrary sequence. Define
\BE\label{Eqn:ut_bar_def}
\bar{\bm{u}}_t:= {\bm{u}}_t-\left(\para{d}_{t-1,1}\cdot\bm{r}_1+\cdots+\para{d}_{t-1,t-1}\cdot\bm{r}_{t-1}\right),\quad\forall t>1,
\EE
and $\bar{\bm{u}}_1:=\bm{u}_1$. Then, $(\bm{r}_t)_{t\ge1}$ can be represented as
\BE\label{Eqn:lemma_reparametrization}
\begin{bmatrix}
\bm{r}_1\\
\bm{r}_2\\
\vdots\\
\bm{r}_t
\end{bmatrix}
=
\begin{bmatrix}
P_{1,1}(\bm{W})&  &   &   \\
P_{2,1}(\bm{W})& P_{2,2}(\bm{W})&   &   \\
\vdots & \vdots & \ddots & \\
P_{t,1}(\bm{W})& P_{t,1}(\bm{W})& \cdots  &  P_{t,t}(\bm{W})
\end{bmatrix}
\begin{bmatrix}
\bar{\bm{u}}_1\\
\bar{\bm{u}}_2\\
\vdots\\
\bar{\bm{u}}_t
\end{bmatrix},
\EE
where $(P_{t,i})_{1\le t,1\le i\le t}$ is a sequence of polynomials. Let $\paraB{P}_t:\mathbb{R}\mapsto\mathbb{R}^{t\times t}$ be a collection of these polynomials:
\BE\label{Eqn:Pt_def_first}
\paraB{P}_t(\lambda):=
\begin{bmatrix}
P_{1,1}(\lambda)&  &   &  \\
P_{2,1}(\lambda)& P_{2,2}(\lambda)&   &   \\
\vdots & \vdots & \ddots & \\
P_{t,1}(\lambda)& P_{t,1}(\lambda)& \cdots  &  P_{t,t}(\lambda)
\end{bmatrix},
\quad\forall \lambda\in\mathbb{R}.
%=\sum_{i=0}^{t-1}(\lambda \para{D}_t-\para{B}_t\para{D}_t)^i\left(\lambda\bm{I}_t-\para{B}_t\right) 
\EE
Then, $\paraB{P}_t(\lambda)$ admits the following explicit expression:
\BE
\paraB{P}_t(\lambda)=(\paraB{I}_t-\lambda\paraB{D}_t+\paraB{B}_t\paraB{D}_t)^{-1}(\lambda\paraB{I}_t-\paraB{B}_t),\quad\forall \lambda\in\mathbb{R}.
\EE
In the above equation, $\paraB{B}_t\in\mathbb{R}^{t\times t}$ denotes the de-biasing matrix \eqref{Eqn:B_t_def} and $\paraB{D}_t\in\mathbb{R}^{t\times t}$ is defined as
\BE\label{Eqn:D_def}
\paraB{D}_t :=
\begin{bmatrix}
0&  &  &  &  \\
\para{d}_{1,1}& 0 &  &  & \\
\para{d}_{2,1}& \para{d}_{2,2} &0 & & \\
\vdots & \vdots & \ddots &  &  \\
\para{d}_{t-1,1} & \para{d}_{t-1,2} & \cdots &\para{d}_{t-1,t-1}  & 0
\end{bmatrix}.
\EE
\end{lemma}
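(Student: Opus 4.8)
The lemma is purely algebraic, so the plan is to prove it by ``vectorizing'' the first $t$ iterations. Stack the iterates into block vectors $\bm{r}=(\bm{r}_1^\UT,\dots,\bm{r}_t^\UT)^\UT$, $\bm{u}=(\bm{u}_1^\UT,\dots,\bm{u}_t^\UT)^\UT$ and $\bar{\bm{u}}=(\bar{\bm{u}}_1^\UT,\dots,\bar{\bm{u}}_t^\UT)^\UT$ in $\mathbb{R}^{tN}$. Reading the FOM update \eqref{eq:ZFAMP_a} block-row by block-row gives the single identity
\[
\bm{r}=\bigl((\paraB{I}_t\otimes\mW)-(\paraB{B}_t\otimes\mI_N)\bigr)\bm{u}=:\mM\bm{u},
\]
where $\paraB{B}_t$ is exactly the lower-triangular de-biasing matrix \eqref{Eqn:B_t_def}, and reading the definition \eqref{Eqn:ut_bar_def} of $\bar{\bm{u}}_t$ (together with $\bar{\bm{u}}_1=\bm{u}_1$) gives $\bar{\bm{u}}=\bm{u}-(\paraB{D}_t\otimes\mI_N)\bm{r}$, i.e.\ $\bm{u}=\bar{\bm{u}}+(\paraB{D}_t\otimes\mI_N)\bm{r}$, with $\paraB{D}_t$ the strictly-lower-triangular matrix \eqref{Eqn:D_def}. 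Substituting the second identity into the first and collecting $\bm{r}$ yields
\[
\bigl(\mI_{tN}-\mM(\paraB{D}_t\otimes\mI_N)\bigr)\bm{r}=\mM\,\bar{\bm{u}}.
\]

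Next I would simplify the left-hand coefficient. By the mixed-product rule, $\mM(\paraB{D}_t\otimes\mI_N)=(\paraB{D}_t\otimes\mW)-(\paraB{B}_t\paraB{D}_t\otimes\mI_N)$, so
\[
\mI_{tN}-\mM(\paraB{D}_t\otimes\mI_N)=\bigl((\paraB{I}_t+\paraB{B}_t\paraB{D}_t)\otimes\mI_N\bigr)-\paraB{D}_t\otimes\mW.
\]
Since $\paraB{D}_t$ and $\paraB{B}_t\paraB{D}_t$ are strictly lower triangular, this matrix is block-lower-triangular with every diagonal block equal to $\mI_N$, hence invertible, and therefore $\bm{r}=\bigl(\mI_{tN}-\mM(\paraB{D}_t\otimes\mI_N)\bigr)^{-1}\mM\,\bar{\bm{u}}$.

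It then remains to identify this $tN\times tN$ matrix with the ``evaluation at $\bm{W}$'' of the polynomial matrix $\paraB{P}_t(\lambda)=(\paraB{I}_t-\lambda\paraB{D}_t+\paraB{B}_t\paraB{D}_t)^{-1}(\lambda\paraB{I}_t-\paraB{B}_t)$. For this I would work in the polynomial ring $\mathbb{R}^{t\times t}[\lambda]$ and use the $\mathbb{R}$-algebra homomorphism $\sum_k\bm{C}_k\lambda^k\mapsto\sum_k\bm{C}_k\otimes\mW^k$ into $\mathbb{R}^{tN\times tN}$ (the homomorphism property is again the mixed-product rule). The key point is that $\paraB{A}(\lambda):=\paraB{I}_t-\lambda\paraB{D}_t+\paraB{B}_t\paraB{D}_t$ is unipotent lower triangular over $\mathbb{R}[\lambda]$ (it has determinant $1$), so its strictly-lower-triangular part $\paraB{N}(\lambda):=\paraB{B}_t\paraB{D}_t-\lambda\paraB{D}_t$ satisfies $\paraB{N}(\lambda)^t=0$ and $\paraB{A}(\lambda)^{-1}=\sum_{k=0}^{t-1}(\lambda\paraB{D}_t-\paraB{B}_t\paraB{D}_t)^k$ is a genuine polynomial matrix; consequently $\paraB{P}_t(\lambda)$ is a block-lower-triangular polynomial matrix, and the polynomials $(P_{t,i})$ of the statement are precisely its entries. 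Applying the homomorphism to the polynomial identity $\paraB{A}(\lambda)\paraB{P}_t(\lambda)=\lambda\paraB{I}_t-\paraB{B}_t$ turns it into $\bigl(\mI_{tN}-\mM(\paraB{D}_t\otimes\mI_N)\bigr)\cdot[\text{image of }\paraB{P}_t]=\mM$, so the image of $\paraB{P}_t$ equals $\bigl(\mI_{tN}-\mM(\paraB{D}_t\otimes\mI_N)\bigr)^{-1}\mM$; reading the resulting identity $\bm{r}=[\text{image of }\paraB{P}_t]\,\bar{\bm{u}}$ block-by-block is exactly \eqref{Eqn:lemma_reparametrization} with the claimed closed form for $\paraB{P}_t(\lambda)$.

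The main thing to be careful about is this last identification: a priori $\paraB{A}(\lambda)^{-1}$ only lives over the field of rational functions $\mathbb{R}(\lambda)$, and ``substituting $\lambda=\bm{W}$'' is only legitimate once one knows the inverse is polynomial — which is exactly where the strict lower-triangularity (equivalently, nilpotence) of $\paraB{D}_t$, and hence $\det\paraB{A}(\lambda)\equiv 1$, is used. Everything else is Kronecker-product bookkeeping. An alternative, more computational route is an induction on $t$: peel off $\bm{u}_t=\bar{\bm{u}}_t+\sum_{i<t}\para{d}_{t-1,i}\bm{r}_i$, plug in the already-established representations of $\bm{r}_1,\dots,\bm{r}_{t-1}$, and read off a recursion for the $P_{t,i}$; one would then have to check separately that this recursion is solved by the stated formula, so the block-matrix elimination above is the cleaner path.
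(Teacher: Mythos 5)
Your proposal is correct, and its skeleton coincides with the paper's proof in Appendix \ref{App:reformulation}: both stack the first $t$ iterations into $\bm{r}=(\paraB{I}_t\otimes\bm{W}-\paraB{B}_t\otimes\bm{I}_N)\bm{u}$, substitute $\bm{u}=\bar{\bm{u}}+(\paraB{D}_t\otimes\bm{I}_N)\bm{r}$, and invert the resulting unit-lower-triangular block matrix. Where you diverge is the final identification of $\bigl(\bm{I}_{tN}-\paraB{D}_t\otimes\bm{W}+\paraB{B}_t\paraB{D}_t\otimes\bm{I}_N\bigr)^{-1}(\paraB{I}_t\otimes\bm{W}-\paraB{B}_t\otimes\bm{I}_N)$ with the blockwise evaluation of $\paraB{P}_t$ at $\bm{W}$: the paper diagonalizes $\bm{W}=\bm{O}\bm{\Lambda}\bm{O}^\UT$ and conjugates by the commutation (perfect-shuffle) permutation $\bm{\Pi}$ so that the big matrix becomes $\mathrm{diag}\{\paraB{P}_t(\lambda_1),\ldots,\paraB{P}_t(\lambda_N)\}$, whereas you apply the algebra homomorphism $\sum_k\bm{C}_k\lambda^k\mapsto\sum_k\bm{C}_k\otimes\bm{W}^k$ to the polynomial identity $\paraB{A}(\lambda)\paraB{P}_t(\lambda)=\lambda\paraB{I}_t-\paraB{B}_t$, after first checking that $\paraB{A}(\lambda)^{-1}$ is itself polynomial because $\paraB{A}(\lambda)$ is unipotent. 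Your route avoids the spectral decomposition and the permutation-matrix bookkeeping entirely, and in particular does not use symmetry or diagonalizability of $\bm{W}$; the paper's route is more concrete and meshes directly with how $f_t(\bm{W})$ is defined via eigenvalues elsewhere in the text. You also correctly isolate the one genuine subtlety --- that substituting $\bm{W}$ into $\paraB{A}(\lambda)^{-1}$ is only licit once the inverse is known to be a polynomial matrix, which is exactly where the strict lower-triangularity of $\paraB{D}_t$ enters --- so there is no gap.
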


\begin{proof}
See Appendix \ref{App:reformulation}.
\end{proof}

In Lemma \ref{Lem:RI_AMP_reformulation}, we have rewritten the iterate $\bm{r}_t$ as a linear combination of the variables $(\bar{\bm{u}}_1,\ldots,\bar{\bm{u}}_t)$, which we defined in \eqref{Eqn:ut_bar_def}. (The reason why we introduce these new variables will be clear in our later discussions.) Towards reducing the FOM to a OAMP algorithm and as the second step of our derivation, we impose a trace-free constraint on the matrices  $(P_{t,i})_{1\le t,1\le i\le t}$ in \eqref{Eqn:ut_bar_def}, in order to cast the FOM into certain long-memory OAMP algorithm. The following lemma shows that this trace-free constraint naturally yields the form of the de-biasing matrix $\paraB{B}_t$ used in RI-AMP. 

\begin{lemma}[Choice of de-biasing matrix $\paraB{B}_t$]\label{Lem:RI_AMP_zero_trace}
Let $\paraB{B}\in\mathbb{R}^{t\times t}$ and $\paraB{D}_t\in\mathbb{R}^{t\times t}$ be two deterministic matrices defined as in \eqref{Eqn:B_t_def} and \eqref{Eqn:D_def}. Define $\paraB{P}_t:\mathbb{R}\mapsto\mathbb{R}^{t\times t}$ as
\BE\label{Eqn:Pt_inverse_def}
\paraB{P}_t(\lambda):=(\paraB{I}_t-\lambda\paraB{D}_t+\paraB{B}_t\paraB{D}_t)^{-1}(\lambda\paraB{I}_t-\paraB{B}_t),\quad\forall \lambda\in\mathbb{R}.
\EE
Then, the following hold true.
\begin{enumerate}
\item[(1)] For any given $\paraB{D}_t$, the following equation has a unique solution in $\paraB{B}_t$:
\BE\label{Eqn:trace_free_eqn}
\mathbb{E}\left[\paraB{P}_t({\mathsf{\Lambda}})\right]=\bm{0}_{t\times t},\quad \mathsf{\Lambda}\sim\mu.
\EE
\item[(2)] The matrix $\paraB{B}_t$ that solves \eqref{Eqn:trace_free_eqn} can be represented as
\BE\label{Eqn:B_solution}
\paraB{B}_t = \sum_{i=1}^{t}\alpha_i\paraB{D}_t^{i-1},
\EE
where we adopted the convention $\paraB{D}_t^0:=\bm{I}$. Let $(Q_{n})_{n\ge0}$ be a sequence of polynomials recursively defined by ($Q_0(\lambda):=1,\forall\lambda\in\mathbb{R}$):
\BS\label{Eqn:poly_recursive_def}
\BE
Q_n(\lambda)=\lambda Q_{n-1}(\lambda) - \sum_{i=1}^n \mathbb{E}\left[\mathsf{\Lambda}  Q_{i-1}(\mathsf{\Lambda})\right]\cdot Q_{n-i}(\lambda),\quad\forall \lambda\in\mathbb{R},n\ge1.
\EE
The coefficients $(\alpha_n)_{n\ge1}$ in \eqref{Eqn:B_solution} are given by
\BE
\alpha_{n} :=\mathbb{E}\left[\mathsf{\Lambda}  Q_{n-1}(\mathsf{\Lambda})\right],\quad\forall n\ge1.
\EE
\ES
\item[(3)] The matrix defined in \eqref{Eqn:B_solution} can be alternatively represented as
\BE
\paraB{B}_t = \sum_{i=1}^{t}\kappa_i\paraB{D}_t^{i-1},
\EE
where $(\kappa_i)_{i\ge1}$ denotes the free cumulants of $\mu$.
\item[(4)] With $\paraB{B}_t$ given by \eqref{Eqn:B_solution}, $\paraB{P}_t(\lambda) $ can be represented as
\BE
\paraB{P}_t(\lambda) = \sum_{i=1}^{t}Q_i(\lambda)\paraB{D}_t^{i-1}.
\EE 
%where the sequence of polynomials $(P_i)_{i\ge1}$ are defined in \eqref{Eqn:poly_recursive_def}.
\end{enumerate}
\end{lemma}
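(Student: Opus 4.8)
The plan is to treat the four claims as a single linked chain. The central object is the matrix-valued rational function $\paraB{P}_t(\lambda)=(\paraB{I}_t-\lambda\paraB{D}_t+\paraB{B}_t\paraB{D}_t)^{-1}(\lambda\paraB{I}_t-\paraB{B}_t)$. Because $\paraB{D}_t$ is strictly lower-triangular (hence nilpotent, $\paraB{D}_t^t=\bm 0$), and $\paraB{B}_t$ is lower-triangular, all the matrices appearing here are lower-triangular and the inverse is a \emph{polynomial} in $\paraB{D}_t$ and $\paraB{B}_t$ of degree at most $t-1$; there are no convergence issues. First I would establish a normal form: I claim that for the de-biasing matrices of interest, $\paraB{B}_t$ is a polynomial in $\paraB{D}_t$, and moreover the whole algebra generated by $\paraB{D}_t$ acts like the truncated polynomial ring $\mathbb{R}[X]/(X^t)$. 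Concretely, I would prove by induction on $t$ (adding one row/column at a time) the ansatz $\paraB{B}_t=\sum_{i=1}^t \beta_i \paraB{D}_t^{i-1}$ for scalars $\beta_i$ to be determined, substitute into $\paraB{P}_t$, and use the ring isomorphism to reduce the matrix equation $\mathbb{E}[\paraB{P}_t(\mathsf\Lambda)]=\bm 0$ to a scalar identity between formal power series truncated at order $t$.

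For part~(1) and~(2) together: under the ansatz, write $B(X):=\sum_{i\ge1}\beta_i X^{i-1}$ and $P(\lambda,X):=(1-\lambda X+B(X)X)^{-1}(\lambda - B(X))$, all computed in $\mathbb{R}[[X]]$ (then truncated mod $X^t$). The equation $\mathbb{E}_\mu[P(\mathsf\Lambda,X)]=0$ is one equation on the coefficient of each power $X^{j}$, $0\le j\le t-1$, and solving it coefficient-by-coefficient determines $\beta_1,\dots,\beta_t$ \emph{uniquely} in terms of $(m_n)$ — the $X^{j}$-equation pins down $\beta_{j+1}$ given $\beta_1,\dots,\beta_j$. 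That uniqueness is exactly claim~(1). To identify the $\beta_i$ as the $\alpha_i=\mathbb{E}[\mathsf\Lambda Q_{i-1}(\mathsf\Lambda)]$ of claim~(2), I would rearrange the vanishing condition: $\mathbb{E}_\mu[P(\mathsf\Lambda,X)]=0$ is equivalent to $\mathbb{E}_\mu[(\lambda - B(X))\,\prod\text{-stuff}]=0$, which after clearing the inverse becomes $\mathbb{E}_\mu[\lambda\,R(\lambda,X)] = B(X)\,\mathbb{E}_\mu[R(\lambda,X)]$ for a suitable generating object $R$; unwinding this recursion in $X$ reproduces precisely the recursion $Q_n(\lambda)=\lambda Q_{n-1}(\lambda)-\sum_{i=1}^n\mathbb{E}[\mathsf\Lambda Q_{i-1}]\,Q_{n-i}(\lambda)$ defining the $Q_n$, together with $\mathbb{E}[\paraB{P}_t(\mathsf\Lambda)]$ being the ``remainder'' that the $Q_n$ are designed to kill. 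This simultaneously gives part~(4): plugging the solved $\beta_i=\alpha_i$ back in and using the $Q_n$-recursion telescopes $\paraB{P}_t(\lambda)$ into $\sum_{i=1}^t Q_i(\lambda)\paraB{D}_t^{i-1}$. Part~(3) is then immediate: the $\alpha_i$ of part~(2) are defined by exactly the recursion of Proposition~\ref{Lem:cumulants} (with $\mathsf\Lambda$ there equal to $\mathsf\Lambda\sim\mu$ here), so $\alpha_i=\mathbb{E}[\mathsf\Lambda Q_{i-1}(\mathsf\Lambda)]=\kappa_i$.

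The main obstacle is the bookkeeping in the inductive/power-series step — specifically, verifying cleanly that the coefficient-extraction from $(1-\lambda X+B(X)X)^{-1}(\lambda-B(X))$ produces the $Q_n$-recursion, rather than merely \emph{some} triangular recursion. I expect the cleanest route is to \emph{not} solve for $\beta_i$ blindly but instead to define $Q_n$ first via their recursion (as the lemma does), form the candidate $\tilde{\paraB{B}}_t:=\sum_i\alpha_i\paraB{D}_t^{i-1}$ and candidate $\tilde{\paraB{P}}_t:=\sum_i Q_i\paraB{D}_t^{i-1}$, and then verify directly two algebraic identities: $(\paraB{I}_t-\lambda\paraB{D}_t+\tilde{\paraB{B}}_t\paraB{D}_t)\tilde{\paraB{P}}_t(\lambda)=\lambda\paraB{I}_t-\tilde{\paraB{B}}_t$ (which says $\tilde{\paraB P}_t=\paraB P_t$ for $\paraB B_t=\tilde{\paraB B}_t$, giving~(4)) and $\mathbb{E}_\mu[\tilde{\paraB{P}}_t(\mathsf\Lambda)]=\bm 0$ (giving existence in~(1), via $Q_0=1$ and $\mathbb{E}[Q_n(\mathsf\Lambda)]=0$ for $n\ge1$, which itself follows from the $Q_n$-recursion). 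Both identities reduce, under the $\mathbb{R}[X]/(X^t)$ picture, to the defining recursion of $Q_n$ and can be checked by matching coefficients of $X^{j}$. Uniqueness in~(1) then comes from the triangular structure: the map $\paraB{B}_t\mapsto\mathbb{E}[\paraB{P}_t(\mathsf\Lambda)]$, restricted to lower-triangular $\paraB B_t$, has an injective differential (the leading term in each diagonal is linear and invertible), so the solution just constructed is the only one. Once these two identities are in hand, parts (2), (3), (4) are corollaries and (1) is complete.
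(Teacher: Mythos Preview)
Your treatment of parts~(2), (3), and~(4) is essentially the paper's own argument: the paper also adopts the polynomial ansatz $\paraB{B}_t=\sum_i\alpha_i\paraB{D}_t^{i-1}$, expands the defining identity $(\paraB{I}_t-\lambda\paraB{D}_t+\paraB{B}_t\paraB{D}_t)\paraB{P}_t(\lambda)=\lambda\paraB{I}_t-\paraB{B}_t$ as a polynomial in $\paraB{D}_t$, and reads off the $Q_n$-recursion by matching coefficients. Your ``verify-the-candidate'' variant is a clean reorganization of the same computation, and part~(3) via Proposition~\ref{Lem:cumulants} is exactly what the paper does.

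The gap is in part~(1). Your coefficient-by-coefficient argument in $\mathbb{R}[X]/(X^t)$ establishes uniqueness of $(\beta_1,\dots,\beta_t)$ only \emph{under the polynomial ansatz} $\paraB{B}_t=\sum_i\beta_i\paraB{D}_t^{i-1}$; but the lemma claims uniqueness among \emph{all} lower-triangular $\paraB{B}_t$, a $t(t+1)/2$-parameter family, not a $t$-parameter one. Your fallback---that the map $\paraB{B}_t\mapsto\mathbb{E}[\paraB{P}_t(\mathsf\Lambda)]$ has an ``injective differential''---does not close this: an injective differential yields only local uniqueness, and in any case you have not verified the claim. What is actually needed is that the equations $\mathbb{E}[\paraB{P}_t(\mathsf\Lambda)]=\bm 0$ form a \emph{globally} triangular system in the entries of $\paraB{B}_t$. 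The paper supplies exactly this via an inductive block decomposition: writing
\[
\paraB{B}_t=\begin{bmatrix}\paraB{B}_{t-1}&\bm 0\\ \paraB{b}_t & \para{b}_{t,t}\end{bmatrix},\qquad
\paraB{D}_t=\begin{bmatrix}\paraB{D}_{t-1}&\bm 0\\ \paraB{d}_t & 0\end{bmatrix},
\]
computing $\paraB{P}_t(\lambda)$ blockwise, and showing that once $\paraB{B}_{t-1}$ is fixed by the inductive hypothesis (so that $\mathbb{E}[\paraB{P}_{t-1}(\mathsf\Lambda)]=\bm 0$), the last-row equations force $\para{b}_{t,t}=m_1$ and $\paraB{b}_t=\paraB{d}_t\,\mathbb{E}[\mathsf\Lambda\,\paraB{P}_{t-1}(\mathsf\Lambda)]$ uniquely and \emph{affinely}. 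That row-by-row argument is the missing ingredient in your proposal.
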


\begin{proof}
See Appendix \ref{App:zero_trace}.
\end{proof}

\begin{remark}[On the appearance of free cumulants]
Lemma \ref{Lem:RI_AMP_zero_trace} shows that, to make the matrices in the linear representation \eqref{Eqn:lemma_reparametrization} asymptotically trace-free (in the sense of \eqref{Eqn:trace_free_eqn}), the de-biasing matrix $\paraB{B}_t$ is uniquely given by a polynomial of $\paraB{D}_t$, whose coefficients are free cumulants of the limiting spectral law $\mu$. We remark that, in our derivation of RI-AMP, the appearance of the recursion \eqref{Eqn:poly_recursive_def} is natural, while its connection with free cumulants (as established in Proposition \ref{Lem:cumulants}) may not be immediately clear. In Section \ref{Sec:AMP_variant}, we will introduce a variant of RI-AMP  where the Onsager term is a linear combination of all past divergence-free estimates (which may be interpreted as ``essential" non-Gaussian terms). In this variant of RI-AMP, the de-biasing coefficients satisfy a recurrence different from \eqref{Eqn:poly_recursive_def} and has no direct relationship with the free cumulants.
\end{remark}

The above lemma shows that choosing de-biasing matrix $\paraB{B}_t = \sum_{i=1}^{t}\kappa_i\paraB{D}_t^{i-1}$ ensures the matrices $(\bm{P}_{i,j}(\bm{W}))_{1\le i,1\le j\le i}$ to be asymptotically trace-free. To fully specify the FOM, it remains to determine the matrix $\paraB{D}_t$ which appears in the definition of $(\bar{\bm{u}}_t)_{t\ge1}$. In RI-AMP, this matrix is set to be $\paraB{D}_t=\hat{\paraB{\Phi}}_t$, where $\hat{\paraB{\Phi}}_t$ collects the empirical divergences of the de-noisers; see \eqref{Eqn:Phi_def}. Lemma \ref{Lem:FOM_D} below summarizes our new formulation of RI-AMP in terms of the intermediate variables $(\bar{\bm{u}}_t)_{t\ge1}$.

\begin{lemma}[Choice of $\paraB{D}_t$ and final RI-AMP algorithm]\label{Lem:FOM_D}
Let $(\bm{r}_t)_{t\ge1}$ be the iterates generated by the RI-AMP algorithm. With slight abuse of notations, define $(\bar{\bm{u}}_t)_{t\ge1}$ as in \eqref{Eqn:ut_bar_def}, but with $\paraB{D}_t=\hat{\paraB{\Phi}}_t$. Namely, $\bar{\bm{u}}_1:=\bm{u}_1$ and
\BE\label{Eqn:ut_bar_def2}
\bar{\bm{u}}_t:= 
{\bm{u}}_t-\left(\langle\partial_1 \bm{u}_t\rangle\cdot\bm{r}_1+\cdots+\langle\partial_{t-1} \bm{u}_t\rangle \cdot\bm{r}_{t-1}\right), \quad\forall t>1.
\EE
Then, $(\bm{r}_t)_{t\ge1}$ can be represented as
\BE\label{Eqn:lemma_reparametrization_recall}
\begin{bmatrix}
\bm{r}_1\\
\bm{r}_2\\
\vdots\\
\bm{r}_t
\end{bmatrix}
=
\begin{bmatrix}
\hat{P}_{1,1}(\bm{W})& &    &    \\
\hat{P}_{2,1}(\bm{W})& \hat{P}_{2,2}(\bm{W})&    &    \\
\vdots & \vdots & \ddots & \\
\hat{P}_{t,1}(\bm{W})& \hat{P}_{t,1}(\bm{W})& \cdots  &  \hat{P}_{t,t}(\bm{W})
\end{bmatrix}
\begin{bmatrix}
\bar{\bm{u}}_1\\
\bar{\bm{u}}_2\\
\vdots\\
\bar{\bm{u}}_t
\end{bmatrix},
\EE
where $(\hat{P}_{t,i})_{1\le t,1\le i\le t}$ are a sequence of polynomials. Let $\hat{\paraB{P}}_t(\lambda)\in\mathbb{R}^{t\times t}$ be the collection of these polynomials (cf.~\eqref{Eqn:Pt_def_first}). We have
\BE\label{Eqn:Pt_Phi_t_lemma1}
\hat{\paraB{P}}_t(\lambda)=\sum_{i=1}^t Q_i(\lambda)\hat{\paraB{\Phi}}_t^{i-1},\quad\forall \lambda\in\mathbb{R},
\EE
where $(Q_i(\lambda))_{i\ge1}$ are defined recursively
\BE\label{Eqn:Q_poly_recursion_lemma}
Q_n(\lambda)=\lambda Q_{n-1}(\lambda)-\sum_{i=1}^{n}\mathbb{E}[\mathsf{\Lambda} Q_{i-1}(\mathsf{\Lambda})] \cdot Q_{n-i}(\lambda),\quad\forall\lambda\in\mathbb{R},n\ge1,
\EE
under the initialization $ Q_0(\lambda)=1,\forall\lambda\in\mathbb{R}$.
\end{lemma}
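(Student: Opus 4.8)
The plan is to deduce this lemma directly from Lemma \ref{Lem:RI_AMP_reformulation} and Lemma \ref{Lem:RI_AMP_zero_trace} by specializing the free parameters to the divergences of the RI-AMP denoisers. First, observe that Lemma \ref{Lem:RI_AMP_reformulation} is purely algebraic: for \emph{any} choice of the sequence $(\para{d}_{t,i})$, the iterates $(\bm{r}_t)_{t\ge1}$ of a FOM with de-biasing matrix $\paraB{B}_t$ satisfy the linear representation \eqref{Eqn:lemma_reparametrization} with $\paraB{P}_t(\lambda)=(\paraB{I}_t-\lambda\paraB{D}_t+\paraB{B}_t\paraB{D}_t)^{-1}(\lambda\paraB{I}_t-\paraB{B}_t)$. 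Here the RI-AMP algorithm is the FOM whose de-biasing matrix is $\paraB{B}_t=\sum_{i=1}^t\kappa_i\hat{\paraB{\Phi}}_t^{i-1}$ (Definition \ref{Def:RI-AMP}), and I choose $\paraB{D}_t=\hat{\paraB{\Phi}}_t$, which is precisely the choice made in \eqref{Eqn:ut_bar_def2}. With this substitution, $\bar{\bm{u}}_t$ as defined in \eqref{Eqn:ut_bar_def} coincides with \eqref{Eqn:ut_bar_def2}, and Lemma \ref{Lem:RI_AMP_reformulation} immediately furnishes \eqref{Eqn:lemma_reparametrization_recall} with some polynomials $\hat{P}_{t,i}$ collected into $\hat{\paraB{P}}_t(\lambda)=(\paraB{I}_t-\lambda\hat{\paraB{\Phi}}_t+\paraB{B}_t\hat{\paraB{\Phi}}_t)^{-1}(\lambda\paraB{I}_t-\paraB{B}_t)$.

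It then remains to verify the closed form \eqref{Eqn:Pt_Phi_t_lemma1}. For this I would invoke part (4) of Lemma \ref{Lem:RI_AMP_zero_trace}, which states that whenever $\paraB{B}_t=\sum_{i=1}^t\alpha_i\paraB{D}_t^{i-1}$ with $\alpha_n=\mathbb{E}[\mathsf{\Lambda}Q_{n-1}(\mathsf{\Lambda})]$ (equivalently $\alpha_n=\kappa_n$ by part (3)), the resulting $\paraB{P}_t(\lambda)$ equals $\sum_{i=1}^t Q_i(\lambda)\paraB{D}_t^{i-1}$, with $(Q_i)$ obeying the recursion \eqref{Eqn:poly_recursive_def}. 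Applying this with $\paraB{D}_t=\hat{\paraB{\Phi}}_t$ gives exactly \eqref{Eqn:Pt_Phi_t_lemma1} and \eqref{Eqn:Q_poly_recursion_lemma}. One small bookkeeping point to confirm is that the RI-AMP de-biasing matrix in Definition \ref{Def:RI-AMP} is indeed of the form required by Lemma \ref{Lem:RI_AMP_zero_trace}(3) with $\paraB{D}_t$ replaced by $\hat{\paraB{\Phi}}_t$ — but this is immediate since \eqref{Eqn:RI-AMP_B_Phi} reads $\paraB{B}_t=\sum_{i=1}^t\kappa_i\hat{\paraB{\Phi}}_t^{i-1}$, matching the formula in part (3) verbatim. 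Thus the lemma follows by simply chaining the two preceding lemmas together; strictly speaking there is no new content to prove, only a specialization to record.

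The only genuine subtlety — and the place I would be most careful — is the consistency of the recursion \eqref{Eqn:Q_poly_recursion_lemma} with the statement of Proposition \ref{Lem:cumulants}: there $\mathbb{E}[\mathsf{\Lambda}Q_{n-1}]=\kappa_n$, so \eqref{Eqn:Q_poly_recursion_lemma} is literally the same recursion as \eqref{Eqn:Q_def}, which reassures us that the $Q_i$ appearing here are the same polynomials whose coefficients are governed by Corollary \ref{Cor:poly_coefficients}. I would state this identification explicitly so the reader sees that the $\hat{P}_{t,i}$ are traceless polynomials of $\bm{W}$ (each $Q_i(\bm{W})$ is trace-free by \eqref{Eqn:trace_free_eqn}), which is precisely the property needed in the next step of the paper to cast RI-AMP as an OAMP instance and invoke Theorem \ref{The:OAMP_SE}. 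No analytic estimates or random-matrix arguments enter here — the content is entirely algebraic and is inherited wholesale from Lemmas \ref{Lem:RI_AMP_reformulation} and \ref{Lem:RI_AMP_zero_trace}.
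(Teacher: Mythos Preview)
Your approach is essentially the same as the paper's: specialize Lemma~\ref{Lem:RI_AMP_reformulation} and Lemma~\ref{Lem:RI_AMP_zero_trace} to $\paraB{D}_t=\hat{\paraB{\Phi}}_t$ and read off the polynomial form from part~(4). However, you miss the one subtle point the paper explicitly flags, and instead highlight a non-issue. Lemma~\ref{Lem:RI_AMP_zero_trace} is stated for \emph{deterministic} $\paraB{B}_t,\paraB{D}_t$, whereas here $\hat{\paraB{\Phi}}_t$ contains empirical divergences and is random and correlated with $\bm{W}$; so you cannot invoke part~(4) as a black box. The paper addresses this by observing that the relevant content of Lemma~\ref{Lem:RI_AMP_zero_trace}(2)--(4) is a purely algebraic polynomial identity: since $\hat{\paraB{\Phi}}_t$ is strictly lower triangular, $(\paraB{I}_t-\lambda\hat{\paraB{\Phi}}_t+\paraB{B}_t\hat{\paraB{\Phi}}_t)^{-1}$ expands as a finite Neumann sum, and with $\paraB{B}_t=\sum_i\kappa_i\hat{\paraB{\Phi}}_t^{i-1}=\sum_i\mathbb{E}[\mathsf{\Lambda}Q_{i-1}(\mathsf{\Lambda})]\hat{\paraB{\Phi}}_t^{i-1}$ (the equality using Proposition~\ref{Lem:cumulants}), the same coefficient-matching argument from the proof of Lemma~\ref{Lem:RI_AMP_zero_trace}(2) yields \eqref{Eqn:Pt_Phi_t_lemma1} for each realization of $\hat{\paraB{\Phi}}_t$. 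Your instinct that ``no analytic estimates enter'' is correct, but you should state explicitly that the deterministic hypothesis of Lemma~\ref{Lem:RI_AMP_zero_trace} is not needed for the polynomial identity itself---only for the trace-free interpretation---and that this is why the specialization is legitimate. The ``subtlety'' you identify (consistency of \eqref{Eqn:Q_poly_recursion_lemma} with Proposition~\ref{Lem:cumulants}) is just a sanity check, not a gap.
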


\begin{proof}
Note that RI-AMP is an instance of FOM with $\paraB{B}_t = \sum_{i=1}^{t}\kappa_i\hat{\paraB{\Phi}}_t^{i-1}$. The claimed result is a consequence of Lemma \ref{Lem:RI_AMP_reformulation} and Lemma \ref{Lem:RI_AMP_zero_trace}. However, we emphasize one subtle point here. Note that we have assumed $\paraB{B}_t$ and $\paraB{D}_t$ to be deterministic in Lemma \ref{Lem:RI_AMP_zero_trace}, based on which $\mathbb{E}_{\mathsf{\Lambda}\sim\mu}\left[\paraB{P}_t(\mathsf{\Lambda})\right]$ was calculated. On the other hand, when $\paraB{D}_t=\hat{\paraB{\Phi}}_t$, which contains the empirical divergences of the de-noisers, the matrix $\paraB{D}_t$ is no longer deterministic and correlated with $\bm{W}$. Nevertheless, Lemma \ref{Lem:FOM_D} still holds. To see this, note that by Lemma \ref{Lem:RI_AMP_reformulation}, \eqref{Eqn:lemma_reparametrization_recall} holds with 
\BS\label{Eqn:Pt_temp}
\begin{align}
\hat{\paraB{P}}_t(\lambda)&=(\paraB{I}_t-\lambda\hat{\paraB{\Phi}}_t+\paraB{B}_t\hat{\paraB{\Phi}}_t)^{-1}(\lambda\paraB{I}_t-\paraB{B}_t)\\
&=\sum_{i=1}^t\left(\lambda\hat{\paraB{\Phi}}_t-\paraB{B}_t\hat{\paraB{\Phi}}_t\right)^{i-1}(\lambda\paraB{I}_t-\paraB{B}_t),
\end{align}
\ES
where the second step is due to the following identity for a strictly lower triangular matrix $\paraB{A}\in\mathbb{R}^{t\times t}$: $(\paraB{I}_t-\paraB{A})^{-1}=\paraB{I}+\paraB{A}+\paraB{A}^2+\cdots+\paraB{A}^{t-1}$. Now, $\paraB{B}_t=\sum_{i=1}^t \kappa_i\hat{\paraB{\Phi}}^{i-1}_t $, we have
\BE\label{Eqn:Bt_temp}
\paraB{B}_t=\sum_{i=1}^t \kappa_i\hat{\paraB{\Phi}}^{i-1}_t=\sum_{i=1}^t \mathbb{E}[\mathsf{\Lambda} Q_{i-1}(\mathsf{\Lambda})]\cdot \hat{\paraB{\Phi}}^{i-1}_t,
\EE
where $(Q_i(\lambda))_{i\ge1}$ are defined recursively in \eqref{Eqn:Q_poly_recursion_lemma}. The second equality in the above equation is due to the recursive characterization of free cumulants in Lemma \ref{Lem:cumulants}. From \eqref{Eqn:Pt_temp} and \eqref{Eqn:Bt_temp}, $\hat{\paraB{P}}_t(\lambda)$ is a polynomial of $\hat{\paraB{\Phi}}_t$. Applying similar algebraic calculations as the proof of Lemma \ref{Lem:RI_AMP_zero_trace}-(2) leads to the expression of $\hat{\paraB{\Phi}}_t$ \eqref{Eqn:Pt_Phi_t_lemma1} that we aim to derive.
\end{proof}

\begin{remark}[RI-AMP with univariate denoisers]
When the de-noiser $\eta_{t+1}(\cdot)$ only depends on $\bm{r}_t$, the above representation of RI-AMP can be further simplified. In particular, if $\langle \partial_{t}\bm{u}_{t+1}\rangle=1,\forall t\ge1$, the matrix $\hat{\paraB{P}}_t(\lambda)$ becomes (see \eqref{Eqn:Pt_Phi_t_lemma1} and \eqref{Eqn:Phi_def})
\BS
\begin{align*}
\hat{\paraB{P}}_t(\lambda)&=\sum_{i=1}^t Q_i(\lambda)
\begin{bmatrix}
0&  &  &  &  \\
1& 0 &  &  & \\
0& 1 &0 & & \\
\vdots & \vdots & \ddots &  &  \\
0& 0& \cdots &1& 0
\end{bmatrix}^{i-1}
=
\begin{bmatrix}
Q_1(\lambda)&  &  &  &  \\
Q_2(\lambda)& Q_1(\lambda) &  &  & \\
Q_3(\lambda)& Q_2(\lambda) &Q_1(\lambda) & & \\
\vdots & \vdots & \ddots &  &  \\
Q_t(\lambda)& Q_{t-1}(\lambda)& \cdots &Q_2(\lambda)& Q_1(\lambda)
\end{bmatrix}.
\end{align*}
\ES
Hence, the representation of $(\bm{r}_t)_{t\ge1}$ in \eqref{Eqn:lemma_reparametrization_recall} recovers \eqref{Eqn:r1_rt} which we introduced in Section \ref{Sec:overview}.
\end{remark}

Notice that the representation of the iterates $(\bm{r}_t)_{t\ge1}$ of RI-AMP in \eqref{Eqn:lemma_reparametrization_recall} is \textit{deterministic}. At this point, we have rewritten the RI-AMP algorithm as an OAMP algorithm. The asymptotic Gaussian distribution of $(\bm{r}_t)_{t\ge1}$ then follows from the state evolution results of OAMP. Theorem \ref{Th:AMP_SE} below summarizes the state evolution of RI-AMP, which reproduces the state evolution result of RI-AMP \citep{fan2022approximate}, but presented differently.

\begin{theorem}[State evolution of RI-AMP]\label{Th:AMP_SE}
Let $(\bm{r}_t)_{t\ge1}$ be generated by the RI-AMP algorithm. Suppose that Assumption \ref{Ass:RI-AMP} holds. Then, the following holds as $N\to\infty$:
\BS
\BE\label{Eqn:AMP_SE_convergence}
(\bm{r}_1,\ldots,\bm{r}_t)\overset{W_2}{\longrightarrow}(\mathsf{R}_1,\ldots,\mathsf{R}_t)\sim\mathcal{N}(\bm{0}_t,\paraB{\Sigma}_t),\quad \forall t\ge1,
\EE
where 
\BE\label{Eqn:Cov_SE_new}
\paraB{\Sigma}_t = \mathbb{E}_{\mathsf{\Lambda}\sim\mu}\left[{\paraB{P}}_t(\mathsf{\Lambda}) \, \bar{\paraB{\Delta}}_t\,{\paraB{P}}_t(\mathsf{\Lambda})^\UT\right],
\EE
with
\BE\label{Eqn:P_population}
{\paraB{P}}_t(\mathsf{\Lambda}):=\sum_{i=1}^t Q_i(\mathsf{\Lambda}){\paraB{\Phi}}_t^{i-1}.
\EE
In the above equations,
\begin{itemize}
\item The sequence of polynomials $(Q_i)_{i\ge1}$ are defined in \eqref{Eqn:poly_recursive_def}.
\item $\paraB{\Phi}_t$ is defined as in \eqref{Eqn:Phi_def} but with the empirical divergence $\langle\partial_i \bm{u}_j\rangle:=\langle\partial_i \eta_j(\bm{r}_1,\ldots,\bm{r}_{j-1})\rangle$ replaced by $\mathbb{E}\left[\partial_i\eta_j(\mathsf{R}_1,\ldots,\mathsf{R}_{j-1})\right]$, for all $j\in[t]$ and $i\in[j-1]$.
\item $\bar{\paraB{\Delta}}_t$ is defined as
\BE\label{Eqn:Delta_bar_def}
\bar{\paraB{\Delta}}_t :=
\begin{bmatrix}
\mathbb{E}\left[\bar{\mathsf{U}}_1\bar{\mathsf{U}}_1\right]&\mathbb{E}\left[\bar{\mathsf{U}}_1\bar{\mathsf{U}}_2\right]   &  \cdots & \mathbb{E}\left[\bar{\mathsf{U}}_1\bar{\mathsf{U}}_t\right]  \\
\mathbb{E}\left[\bar{\mathsf{U}}_2\bar{\mathsf{U}}_1\right] & \mathbb{E}\left[\bar{\mathsf{U}}_2\bar{\mathsf{U}}_2\right]   &  \cdots &\mathbb{E}\left[\bar{\mathsf{U}}_2\bar{\mathsf{U}}_t\right]  \\
%\langle\partial_1 \bm{u}_3\rangle& \langle\partial_2 \bm{u}_3 \rangle &0 & &\\
\vdots& \vdots &\ddots  & \vdots \\
\mathbb{E}\left[\bar{\mathsf{U}}_t\bar{\mathsf{U}}_1\right] & \mathbb{E}\left[\bar{\mathsf{U}}_t\bar{\mathsf{U}}_2\right]   & \cdots  & \mathbb{E}\left[\bar{\mathsf{U}}_t\bar{\mathsf{U}}_t\right] 
\end{bmatrix},
\EE
where $\bar{\mathsf{U}}_j:=\eta_j(\mathsf{R}_1,\ldots,\mathsf{R}_{j-1})-\sum_{i=1}^{j-1}\mathbb{E}\left[\partial_i\eta_j(\mathsf{R}_1,\ldots,\mathsf{R}_{j-1})\right]\cdot \mathsf{R}_i$, $\forall j\ge2$.
\end{itemize}
\ES
\end{theorem}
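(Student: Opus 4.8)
The plan is to exhibit the RI-AMP iteration as an instance of the long-memory OAMP algorithm of \defref{Def:LM_OAMP} and then transcribe its state evolution from \thref{The:OAMP_SE}. \lemref{Lem:FOM_D} already does the algebra: with the RI-AMP choice $\paraB{B}_t=\sum_{i=1}^t\kappa_i\hat{\paraB{\Phi}}_t^{i-1}$ one has the exact finite-$N$ identity $\bm{r}_s=\sum_{i=1}^s \hat P_{s,i}(\bm{W})\bar{\bm{u}}_i$ with $\hat{\paraB{P}}_t(\lambda)=\sum_{i=1}^t Q_i(\lambda)\hat{\paraB{\Phi}}_t^{i-1}$, where $\bar{\bm{u}}_1=\bm{u}_1$, $\bar{\bm{u}}_t=\bm{u}_t-\sum_{i<t}\langle\partial_i\bm{u}_t\rangle\bm{r}_i$, and each $Q_i$ is a polynomial that is mean-zero under $\mu$, so $Q_i(\bm{W})$ is (asymptotically) trace-free by \lemref{Lem:RI_AMP_zero_trace}. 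I would introduce OAMP iterates indexed by pairs, $\bm{x}^{(i,j)}:=Q_j(\bm{W})\bar{\bm{u}}_i$, i.e.\ the deterministic trace-free matrix denoiser $Q_j$ applied to $\bar{\bm{u}}_i$. Because the entries of $\hat{\paraB{\Phi}}_t^{k}$ are scalars, each $\bm{r}_t$ is a scalar linear combination $\bm{r}_t=\sum_{i,j}c_{t,i,j}\bm{x}^{(i,j)}$, and the chain rule shows that the map $(\bm{x}^{(i,j)})\mapsto\bar{\bm{u}}_{t+1}$ is precisely the divergence-free denoiser $g_{t+1}$ of \eqref{Eqn:LM_OAMP_recall}: subtracting $\sum_{(i,j)}\langle\partial_{(i,j)}g_{t+1}\rangle\bm{x}^{(i,j)}$ reproduces $\sum_i\langle\partial_i\eta_{t+1}\rangle\bm{r}_i$. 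This realizes RI-AMP inside the OAMP template.

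The next step is an induction on $t$ that also disposes of the randomness of $\hat{\paraB{\Phi}}_t$. The base case $t=1$ is \eqref{Eqn:AMP_r1}. Suppose $(\bm{r}_1,\dots,\bm{r}_{t-1})\wc(\mathsf{R}_1,\dots,\mathsf{R}_{t-1})\sim\mathcal{N}(\bm{0},\paraB{\Sigma}_{t-1})$ together with convergence of the relevant empirical moments. Since $\eta_t$ is Lipschitz and continuously differentiable, $\partial_i\eta_t$ is bounded and continuous, so using this convergence with the test function $\partial_i\eta_t$ in the sense of \defref{Eqn:W2_convergence} gives $\langle\partial_i\bm{u}_t\rangle\pc\mathbb{E}[\partial_i\eta_t(\mathsf{R}_1,\dots,\mathsf{R}_{t-1})]$, hence $\hat{\paraB{\Phi}}_t\pc\paraB{\Phi}_t$ (deterministic) and $\hat{\paraB{P}}_t(\lambda)\pc\paraB{P}_t(\lambda)=\sum_{i=1}^t Q_i(\lambda)\paraB{\Phi}_t^{i-1}$ entrywise. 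Using $\|\bm{W}\|_{\op}\le C$ from \assumpref{Ass:RI-AMP}, replacing $\hat{\paraB{\Phi}}_t$ by $\paraB{\Phi}_t$ in the representation changes the iterates only by an $o_P(1)$ perturbation in the sense of \defref{Eqn:W2_convergence}, so the reduced algorithm has \emph{deterministic} matrix and iterate denoisers satisfying \assumpref{Ass:LM_OAMP}. \thref{The:OAMP_SE} then gives joint Gaussian convergence of the $\bm{x}^{(i,j)}$'s, hence of $(\bm{r}_1,\dots,\bm{r}_t)$, and simultaneously $\bar{\bm{u}}_j\wc\bar{\mathsf{U}}_j$ with $\bar{\mathsf{U}}_j$ the divergence-free limit in the statement; \propref{Pro:orthogonality} supplies the asymptotic orthogonality of the $\bar{\bm{u}}_i$'s and identifies the limiting Gram matrix of the $\bar{\bm{u}}$-block as $\bar{\paraB{\Delta}}_t$ of \eqref{Eqn:Delta_bar_def}. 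This closes the induction and gives \eqref{Eqn:AMP_SE_convergence}.

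It remains to read off $\paraB{\Sigma}_t$. From $\bm{r}_s=\sum_a \hat P_{s,a}(\bm{W})\bar{\bm{u}}_a$ and the decoupling in \eqref{Eqn:decoupling_intro} — the standard random-matrix fact that $\tfrac1N\bar{\bm{u}}_a^\UT R(\bm{W})\bar{\bm{u}}_b\simeq\big(\tfrac1N\Tr R(\bm{W})\big)\,\tfrac1N\bar{\bm{u}}_a^\UT\bar{\bm{u}}_b$ for a fixed polynomial $R$, valid because the $\bar{\bm{u}}$'s are asymptotically independent of $\bm{W}$ — one obtains $\tfrac1N\bm{r}_s^\UT\bm{r}_t\pc\sum_{a,b}\mathbb{E}_{\mathsf{\Lambda}\sim\mu}\big[\hat P_{s,a}(\mathsf{\Lambda})\hat P_{t,b}(\mathsf{\Lambda})\big]\mathbb{E}[\bar{\mathsf{U}}_a\bar{\mathsf{U}}_b]$, which is the $(s,t)$ entry of $\mathbb{E}_{\mathsf{\Lambda}\sim\mu}[\paraB{P}_t(\mathsf{\Lambda})\bar{\paraB{\Delta}}_t\paraB{P}_t(\mathsf{\Lambda})^\UT]$, i.e.\ \eqref{Eqn:Cov_SE_new}; the entries are finite because $U_1$ has moments of all orders, $\|\bm{W}\|_{\op}\le C$, and the $\eta$'s are Lipschitz.

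The main obstacle is the second step: carefully setting up the pair-indexed OAMP correspondence (verifying trace-freeness of the $Q_i(\bm{W})$ and divergence-freeness of the multivariate $\bar{\bm{u}}_t$), and, above all, rigorously justifying the replacement of the $\bm{W}$-correlated \emph{random} matrix $\hat{\paraB{\Phi}}_t$ by its deterministic limit $\paraB{\Phi}_t$ so that \thref{The:OAMP_SE} — which is stated for deterministic denoisers — applies; once this is done, Gaussianity and the covariance formula are immediate from \sref{Sec:OAMP_SE}.
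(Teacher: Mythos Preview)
Your proposal is correct and follows essentially the same approach as the paper: both use \lemref{Lem:FOM_D} to reformulate RI-AMP, run an induction in which $\hat{\paraB{\Phi}}_t\pc\paraB{\Phi}_t$ is established and then used (together with the operator-norm bound on $\bm{W}$) to replace the random $\hat{\paraB{P}}_t$ by the deterministic $\paraB{P}_t$, and finally invoke \thref{The:OAMP_SE} to obtain Gaussianity and the covariance \eqref{Eqn:Cov_SE_new}. The only cosmetic difference is that the paper re-indexes via $\bm{z}_{t,i}=\hat P_{t,i}(\bm{W})\bar{\bm{u}}_i$ whereas you use the finer pair-indexed iterates $\bm{x}^{(i,j)}=Q_j(\bm{W})\bar{\bm{u}}_i$, pushing the randomness into the scalar coefficients rather than the matrix denoisers; the approximation argument and the covariance computation are identical. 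One small caveat: \assumpref{Ass:RI-AMP} only requires $\eta_t$ Lipschitz, not continuously differentiable, so your appeal to bounded continuous $\partial_i\eta_t$ (which the paper also relies on via \assumpref{Ass:LM_OAMP}) tacitly strengthens the hypothesis.
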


\begin{proof}
See Appendix \ref{App:AMP_SE}.
\end{proof}

The state evolution of RI-AMP in Theorem \ref{Th:AMP_SE} is presented in a way that is most natural from the perspective of OAMP, and is in a different format as that given in \citep[Section 4]{fan2022approximate}. The following proposition shows that these two formulations are equivalent.

\begin{proposition}[Consistency with the state evolution in \citep{fan2022approximate}]\label{Pro:equivalence}
The covariance $\paraB{\Sigma}_t$ \eqref{Eqn:Cov_SE_new} can be equivalently written as
\BS\label{Eqn:cov_Fan}
\begin{align}
\paraB{\Sigma}_t = \sum_{j=0}^{\infty}\sum_{i=0}^j \kappa_{j+2}\paraB{\Phi}_t^i\paraB{\Delta}_t\left((\paraB{\Phi}_t)^{j-i}\right)^\UT,
\end{align}
where 
\BE\label{Eqn:Delta_def_Fan}
\paraB{\Delta}_t:=
\begin{bmatrix}
\mathbb{E}\left[{\mathsf{U}}_1{\mathsf{U}}_1\right]&\mathbb{E}\left[{\mathsf{U}}_1{\mathsf{U}}_2\right]   &  \cdots & \mathbb{E}\left[{\mathsf{U}}_1{\mathsf{U}}_t\right]  \\
\mathbb{E}\left[{\mathsf{U}}_2{\mathsf{U}}_1\right] & \mathbb{E}\left[{\mathsf{U}}_2{\mathsf{U}}_2\right]   &  \cdots &\mathbb{E}\left[{\mathsf{U}}_2{\mathsf{U}}_t\right]  \\
%\langle\partial_1 \bm{u}_3\rangle& \langle\partial_2 \bm{u}_3 \rangle &0 & &\\
\vdots& \vdots &\ddots  & \vdots \\
\mathbb{E}\left[{\mathsf{U}}_t{\mathsf{U}}_1\right] & \mathbb{E}\left[{\mathsf{U}}_t{\mathsf{U}}_2\right]   & \cdots  & \mathbb{E}\left[{\mathsf{U}}_t{\mathsf{U}}_t\right] 
\end{bmatrix}
\EE
\ES
with ${\mathsf{U}}_j:=\eta_j(\mathsf{R}_1,\ldots,\mathsf{R}_{j-1})$, and $(\mathsf{R}_1,\ldots,\mathsf{R}_t)$ are the state evolution random variables described in \eqref{Eqn:AMP_SE_convergence}.
\end{proposition}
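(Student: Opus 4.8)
The plan is to eliminate $\bar{\paraB{\Delta}}_t$ in favour of $\paraB{\Delta}_t$, rewrite \eqref{Eqn:Cov_SE_new} as a linear fixed-point equation for $\paraB{\Sigma}_t$, and then solve that equation by a two-variable generating-function computation whose only non-routine input is \propref{Lem:cumulants}.

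\emph{Step 1 (a fixed-point equation).} First I would exploit Gaussianity of $(\mathsf{R}_1,\dots,\mathsf{R}_t)$. By \eqref{Eqn:Delta_bar_def} one has $\bar{\mathsf{U}}=\mathsf{U}-\paraB{\Phi}_t\mathsf{R}$ with $\mathsf{U}_j=\eta_j(\mathsf{R}_1,\dots,\mathsf{R}_{j-1})$ and $\paraB{\Phi}_t$ the deterministic strictly-lower-triangular matrix of \thref{Th:AMP_SE}. The multivariate Stein lemma (Gaussian integration by parts, valid since the $\eta_j$ are Lipschitz) gives $\mathbb{E}[\mathsf{U}\mathsf{R}^\UT]=\paraB{\Phi}_t\paraB{\Sigma}_t$, so expanding $\bar{\paraB{\Delta}}_t=\mathbb{E}[\bar{\mathsf{U}}\bar{\mathsf{U}}^\UT]$ collapses to $\bar{\paraB{\Delta}}_t=\paraB{\Delta}_t-\paraB{\Phi}_t\paraB{\Sigma}_t\paraB{\Phi}_t^\UT$. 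By \lemref{Lem:FOM_D} and \eqref{Eqn:P_population}, $\paraB{P}_t(\mathsf{\Lambda})=\sum_{i\ge1}Q_i(\mathsf{\Lambda})\paraB{\Phi}_t^{i-1}$ is a polynomial in $\paraB{\Phi}_t$, hence commutes with $\paraB{\Phi}_t$; substituting the expression for $\bar{\paraB{\Delta}}_t$ into \eqref{Eqn:Cov_SE_new} and pulling the $\paraB{\Phi}_t$'s outside $\mathbb{E}_{\mathsf{\Lambda}}$ then yields
\[
\paraB{\Sigma}_t=\mathcal{L}(\paraB{\Delta}_t)-\paraB{\Phi}_t\,\mathcal{L}(\paraB{\Sigma}_t)\,\paraB{\Phi}_t^\UT,\qquad \mathcal{L}(X):=\mathbb{E}_{\mathsf{\Lambda}\sim\mu}\!\left[\paraB{P}_t(\mathsf{\Lambda})\,X\,\paraB{P}_t(\mathsf{\Lambda})^\UT\right].
\]
Since $\paraB{\Phi}_t$ is nilpotent ($\paraB{\Phi}_t^t=\bm{0}$), the map $X\mapsto\paraB{\Phi}_t\mathcal{L}(X)\paraB{\Phi}_t^\UT$ is nilpotent, so this equation has a unique solution $\paraB{\Sigma}_t=\sum_{k\ge0}(-1)^k\paraB{\Phi}_t^k\,\mathcal{L}^{k+1}(\paraB{\Delta}_t)\,(\paraB{\Phi}_t^\UT)^k$ (a finite sum).

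\emph{Step 2 (generating functions).} Every operator above sends $X$ to $\sum_{a,b\ge0}\phi_{a,b}\paraB{\Phi}_t^aX(\paraB{\Phi}_t^\UT)^b$; I would encode it by the formal power series $\phi(x,y)=\sum_{a,b}\phi_{a,b}x^ay^b\in\mathbb{R}[[x,y]]$. Under this dictionary, composition of operators becomes multiplication of series and $X\mapsto\paraB{\Phi}_tX\paraB{\Phi}_t^\UT$ becomes multiplication by $xy$, and specialising $x\mapsto\paraB{\Phi}_t$, $y\mapsto\paraB{\Phi}_t^\UT$ is legitimate by nilpotency. The series of $\mathcal{L}$ is $\sum_{i,j\ge1}\mathbb{E}[Q_i(\mathsf{\Lambda})Q_j(\mathsf{\Lambda})]x^{i-1}y^{j-1}=(xy)^{-1}(C(x,y)-1)$, with $C(x,y):=\mathbb{E}_{\mathsf{\Lambda}}[\hat{Q}_{\mathsf{\Lambda}}(x)\hat{Q}_{\mathsf{\Lambda}}(y)]$ and $\hat{Q}_\lambda(z):=\sum_{n\ge0}Q_n(\lambda)z^n$ (using $\mathbb{E}[Q_n(\mathsf{\Lambda})]=0$ for $n\ge1$, i.e.\ trace-freeness, which follows from \eqref{Eqn:Q_poly_recursion_lemma} and \propref{Lem:cumulants}). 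From the recursion \eqref{Eqn:Q_poly_recursion_lemma} together with $\mathbb{E}[\mathsf{\Lambda}Q_{i-1}(\mathsf{\Lambda})]=\kappa_i$ one gets $\hat{Q}_\lambda(z)=(1-\lambda z+K(z))^{-1}$ with $K(z):=\sum_{n\ge1}\kappa_nz^n$, and hence $\mathbb{E}_{\mathsf{\Lambda}}[\hat{Q}_{\mathsf{\Lambda}}(z)]=1$. Writing $A_x:=1+K(x)$, $A_y:=1+K(y)$ and using the identity $A_yx-A_xy=x(A_y-\mathsf{\Lambda}y)-y(A_x-\mathsf{\Lambda}x)$ (note $\frac{x}{A_x-\mathsf{\Lambda}x}=x\hat{Q}_{\mathsf{\Lambda}}(x)$), one finds $(A_yx-A_xy)\,C(x,y)=x\,\mathbb{E}[\hat{Q}_{\mathsf{\Lambda}}(x)]-y\,\mathbb{E}[\hat{Q}_{\mathsf{\Lambda}}(y)]=x-y$. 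A short computation gives the factorization $A_yx-A_xy=(x-y)+(xK(y)-yK(x))=(x-y)\big(1-xy\,E(x,y)\big)$, where $E(x,y):=\sum_{p,q\ge0}\kappa_{p+q+2}x^py^q$; cancelling the non-zero-divisor $x-y$ in $\mathbb{R}[[x,y]]$ gives $C(x,y)=(1-xy\,E(x,y))^{-1}$, so the series of $\mathcal{L}$ equals $\Theta(x,y):=E/(1-xyE)$.

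\emph{Step 3 (conclusion) and the hard part.} In series form the fixed-point equation of Step 1 reads $\Sigma=\Theta-xy\,\Sigma\,\Theta$, so $\Sigma=\Theta/(1+xy\Theta)=E$; translating back, $\paraB{\Sigma}_t=\sum_{p,q\ge0}\kappa_{p+q+2}\paraB{\Phi}_t^p\paraB{\Delta}_t(\paraB{\Phi}_t^\UT)^q$, which is exactly \eqref{Eqn:cov_Fan} after setting $j=p+q$, $i=p$ (the sums being finite since $\paraB{\Phi}_t$ is strictly lower triangular). The one genuinely non-routine step is the collapse of the double generating function $C(x,y)$: one must recognise $\mathbb{E}_{\mathsf{\Lambda}}[\hat{Q}_{\mathsf{\Lambda}}(z)]=1$ as the content of \propref{Lem:cumulants} (equivalently the transform relation $\mathcal{G}(\mathcal{R}(z)+1/z)=z$), and perform the partial-fraction manipulation as an identity of formal power series, noticing that the denominator factors so that cancelling $x-y$ leaves the unit $1-xy\,E(x,y)$ to invert. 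Steps 1 and 3 are bookkeeping with the nilpotent matrix $\paraB{\Phi}_t$.
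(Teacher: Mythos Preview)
Your proof is correct and shares the paper's overall skeleton: both derive the relation $\bar{\paraB{\Delta}}_t=\paraB{\Delta}_t-\paraB{\Phi}_t\paraB{\Sigma}_t\paraB{\Phi}_t^\UT$ from orthogonality/Stein, substitute into \eqref{Eqn:Cov_SE_new} to obtain the fixed-point equation $\paraB{\Sigma}_t=\mathcal{L}(\paraB{\Delta}_t)-\paraB{\Phi}_t\mathcal{L}(\paraB{\Sigma}_t)\paraB{\Phi}_t^\UT$, note uniqueness by nilpotency, and then verify that \eqref{Eqn:cov_Fan} solves it. The difference lies in the verification step. The paper isolates the scalar identity $\Omega_{I,J}=\sum_{m=1}^{I}\sum_{n=1}^{J}\Omega_{I-m,J-n}\,\kappa_{m+n}$ (its \lemref{Lem:QQ_kappa}) and proves it by a hands-on telescoping argument on $Q_IQ_J$, then plugs the candidate \eqref{Eqn:cov_Fan} into the fixed-point equation and matches coefficients. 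You instead package everything into the two-variable generating function $C(x,y)=\E[\hat Q_{\sfLambda}(x)\hat Q_{\sfLambda}(y)]$, use the closed form $\hat Q_\lambda(z)=(1+K(z)-\lambda z)^{-1}$ together with $\E[\hat Q_{\sfLambda}(z)]=1$ (equivalently \propref{Lem:cumulants}) and a partial-fraction step to get $C(x,y)=(1-xy\,E(x,y))^{-1}$, from which the solution $\Sigma=E$ drops out algebraically. Your identity $C=(1-xyE)^{-1}$ is exactly the generating-function form of the paper's \lemref{Lem:QQ_kappa}, so the two routes are equivalent in content; your packaging is more systematic and makes the role of the $R$-transform series $K(z)$ transparent, while the paper's telescoping proof stays closer to the recursion \eqref{Eqn:Q_def} and avoids any formal-series bookkeeping.
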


\begin{proof}
See Appendix \ref{App:SE_equivalence}.
\end{proof}

\section{Generalizations and Applications}\label{Sec:generalizations}

In the preceding section, we demonstrated how our approach can be employed to derive an existing AMP algorithm. In this section, we showcase the versatility of our approach by devising two novel AMP variants. The first variant employs a different form of Onsager term. The second variant allows an additional matrix denoising step, inspired by the recent line of work \cite{barbier2023fundamental,dudeja2024optimality,barbier2024information}. It is possible to produce numerous other variants of AMP, but we do not pursue it further in this paper. Finally, we discuss the applications in spiked models. 

\subsection{Generalization of {RI-AMP}: Different Form of Onsager Terms}\label{Sec:AMP_variant}

In the RI-AMP algorithm, the Onsager term $\para{b}_{t,1}\bm{u}_1+\cdots+\para{b}_{t,t}\bm{u}_t$ is employed to cancel out the non-Gaussian component within $\mW\vu_t $. Our derivation of RI-AMP in Section \ref{sec:AMP_single} indicates that it is possible to achieve the same goal by using an Onsager term that is a linear combination of $(\bar{\bm{u}}_1,\ldots,\bar{\bm{u}}_t)$. Motivated by this insight, we introduce a new variant of RI-AMP, coined {RI-AMP-DF}.

\begin{definition}[A variant of RI-AMP: {RI-AMP-DF}]
Starting from an initialization $\bm{u}_1=\bar{\bm{u}}_1\in\mathbb{R}^N$, {RI-AMP-DF} proceeds as
\BS\label{Eqn:RI-AMP-DF-def}
\begin{align}
\vr_t &= \mW\vu_t - \left(\para{c}_{t,1}\bar{\bm{u}}_1+\para{c}_{t,2}\bar{\bm{u}}_2+\cdots+\para{c}_{t,t}\bar{\bm{u}}_t\right), \quad\forall t\ge1,\\
\vu_{t+1} &= \eta_{t+1} (\vr_1,\ldots,\vr_t),\\
\bar{\bm{u}}_{t+1} &=\eta_{t+1} (\vr_1,\ldots,\vr_t)-\left(\langle\partial_1 \bm{u}_{t+1}\rangle\cdot\bm{r}_1+\cdots+\langle\partial_{t} \bm{u}_{t+1}\rangle \cdot\bm{r}_{t}\right),
\end{align}
\ES
where $\eta_{t+1}:\mathbb{R}^t\mapsto\mathbb{R}$ acts separately on the $N$ rows of $(\bm{r}_1,\ldots,\bm{r}_t)\in\mathbb{R}^{N\times t}$. The de-biasing coefficients are set to
\BE\label{Eqn:C_t_def}
\paraB{C}_t:=
\begin{bmatrix}
\para{c}_{1,1}& & & \\
\para{c}_{2,1}& \para{c}_{2,2} & & \\
\vdots & \vdots & \ddots & \\
\para{c}_{t,1}& \para{c}_{t,2}& \cdots & \para{c}_{t,t}
\end{bmatrix}
=\sum_{i=1}^t \gamma_i (\hat{\paraB{\Phi}}_t)^{i-1},
\EE 
where $\hat{\paraB{\Phi}}_t$ is defined in \eqref{Eqn:Phi_def} and the sequence $(\gamma_n)_{n\in\mathbb{N}}$ are defined as
\BE
\gamma_{n}=\mathbb{E}\left[\mathsf{\Lambda}H_{n-1}(\mathsf{\Lambda})\right],\quad \forall n\ge1, \quad \mathsf{\Lambda}\sim\mu,
\EE
with $(H_i(\lambda))_{i\ge1}$ a sequence of polynomials satisfying the recurrence ($H_0(\lambda):=1$): 
\BE\label{Eqn:variant_poly_recursive_def}
H_{n}(\lambda)=\lambda H_{n-1}(\lambda)-\mathbb{E}\left[\mathsf{\Lambda}H_{n-1}(\mathsf{\Lambda})\right],\quad \forall n\ge1,\quad \lambda\in\mathbb{R}.
\EE
\end{definition}

The de-biasing matrix $\paraB{C}_t$ \eqref{Eqn:C_t_def} is derived using the approach detailed in Section \ref{Sec:main}. Note that both the de-biasing matrix $\paraB{B}_t$ \eqref{Eqn:RI-AMP_B_Phi} for RI-AMP and the de-biasing matrix $\paraB{C}_t$ for {RI-AMP-DF} are polynomials of the divergence matrix $\hat{\paraB{\Phi}}_t$. Moreover, the coefficients in both polynomial representations admit recursive characterization; see \eqref{Eqn:poly_recursive_def} and \eqref{Eqn:variant_poly_recursive_def}.

As before, we can reduce {RI-AMP-DF} to a certain OAMP algorithm. The following theorem summarizes the reduction result.

\begin{theorem}[Reduction of {RI-AMP-DF} to OAMP and state evolution]\label{The:RI-AMP-DF}
Let $(\bm{r}_t)_{t\ge1}$ and $(\bar{\bm{u}}_t)_{t\ge1}$ be generated via the {RI-AMP-DF} algorithm. Then, the following statements hold. 
\begin{itemize}
\item[(1)] For all $t\ge1$,
\BS
\begin{align}
\begin{bmatrix}
\vr_1 \\
\vr_2 \\
\vdots \\
\vr_t
\end{bmatrix} 
&=
\begin{bmatrix}
\hat{G}_{1,1}(\bm{W}) &  &  & \\
\hat{G}_{2,1}(\bm{W}) &\hat{G}_{2,2}(\bm{W})  &  & \\
\vdots & \vdots &\ddots  & \\
\hat{G}_{t,1}(\bm{W}) & \hat{G}_{t,2}(\bm{W}) &\cdots &\hat{G}_{t,t}(\bm{W})
\end{bmatrix}
\begin{bmatrix}
\bar{\vu}_1 \\
\bar{\vu}_2 \\
\vdots \\
\bar{\vu}_t 
\end{bmatrix},
\end{align}
Let $\hat{\paraB{G}}_t(\lambda)$ be the matrix that collects the polynomials $(\hat{G}_{i,j})_{1\le i\le t,1\le j\le i}$ (cf.~\eqref{Eqn:Pt_def_first}).
\ES
Then, $\hat{\paraB{G}}_t(\lambda)$ admits the following representation:
\BE
\hat{\paraB{G}}_t(\lambda) =\sum_{i=1}^t H_i(\lambda)\hat{\paraB{\Phi}}_t^{i-1},\quad\forall \lambda\in\mathbb{R},
\EE
where $(H_n(\lambda))_{n\ge1}$ are defined in \eqref{Eqn:variant_poly_recursive_def}.
\item[(2)] Suppose that Assumption \ref{Ass:RI-AMP} holds. As $N\to\infty$,
\BS
\BE\label{Eqn:AMP-DF_SE_convergence}
(\bm{r}_1,\ldots,\bm{r}_t)\overset{W_2}{\longrightarrow}(\mathsf{R}_1,\ldots,\mathsf{R}_t)\sim\mathcal{N}(\bm{0}_t,\paraB{\Delta}_t),\quad \forall t\ge1,
\EE
where 
% \BE\label{Eqn:Cov-DF_SE_new}
% \paraB{\Delta}_t = \sum_{i=1}^t\sum_{j=1}^t \mathbb{E}_{\mathsf{\Lambda}\sim\mu}\left[H_i(\mathsf{\Lambda})H_j(\mathsf{\Lambda})\right]\cdot \paraB{\Phi}_t^{i-1}\, \bar{\paraB{\Delta}}_t\, (\paraB{\Phi}_t^{j-1})^\UT .
% \EE
\BE\label{Eqn:Cov-DF_SE_new}
\paraB{\Delta}_t = \mathbb{E}_{\mathsf{\Lambda}\sim\mu}\left[{\paraB{G}}_t(\mathsf{\Lambda})\, \bar{\paraB{\Delta}}_t\, {\paraB{G}}_t(\mathsf{\Lambda})^\UT\right]
\EE
with
\BE
{\paraB{G}}_t(\mathsf{\Lambda}) :=\sum_{i=1}^t H_i(\mathsf{\Lambda}){\paraB{\Phi}}_t^{i-1}.
\EE
\ES
In the above equation, $\bar{\paraB{\Delta}}_t$ and $\paraB{\Phi}_t$ are defined similarly as in Theorem \ref{Th:AMP_SE}.
\end{itemize}
\end{theorem}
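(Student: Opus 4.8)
The plan is to prove Theorem~\ref{The:RI-AMP-DF} by the reduction-to-OAMP strategy of Section~\ref{Sec:main}, with the free-cumulant recursion \eqref{Eqn:poly_recursive_def} replaced by the simpler ``center-once'' recursion \eqref{Eqn:variant_poly_recursive_def}. The only structural difference between RI-AMP and RI-AMP-DF is that the Onsager term of the latter is a linear combination of the divergence-free iterates $\bar{\bm{u}}_i$ rather than the raw iterates $\bm{u}_i$; this in fact lightens the algebra, since one never re-expands the Onsager term in terms of the $\bm{u}$'s. The argument splits into the two items of the theorem: an exact algebraic reformulation, followed by an appeal to the OAMP state evolution.

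For part~(1), I would first write $\bm{u}_1=\bar{\bm{u}}_1$ and $\bm{u}_j=\bar{\bm{u}}_j+\sum_{i=1}^{j-1}\langle\partial_i\bm{u}_j\rangle\bm{r}_i$ for $j\ge 2$ (equivalently $\bm{U}=\bar{\bm{U}}+\hat{\paraB{\Phi}}_t\bm{R}$ in block-vector notation) and substitute this \emph{only} into the term $\bm{W}\bm{u}_t$ of \eqref{Eqn:RI-AMP-DF-def}. A straightforward induction on $t$, as in the proof of Lemma~\ref{Lem:RI_AMP_reformulation}, shows $\bm{r}_t$ is a $\bm{W}$-polynomial combination of $\bar{\bm{u}}_1,\ldots,\bar{\bm{u}}_t$; collecting coefficients into $\hat{\paraB{G}}_t(\lambda)$ and treating $\bm{W}$ as the formal variable $\lambda$ gives the fixed-point identity $(\bm{I}_t-\lambda\hat{\paraB{\Phi}}_t)\hat{\paraB{G}}_t(\lambda)=\lambda\bm{I}_t-\paraB{C}_t$, hence $\hat{\paraB{G}}_t(\lambda)=(\bm{I}_t-\lambda\hat{\paraB{\Phi}}_t)^{-1}(\lambda\bm{I}_t-\paraB{C}_t)$. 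Substituting $\paraB{C}_t=\sum_{i\ge1}\gamma_i\hat{\paraB{\Phi}}_t^{i-1}$ and the ansatz $\hat{\paraB{G}}_t(\lambda)=\sum_{i\ge1}H_i(\lambda)\hat{\paraB{\Phi}}_t^{i-1}$ (both sums terminate since $\hat{\paraB{\Phi}}_t$ is strictly lower triangular) and matching the coefficient of $\hat{\paraB{\Phi}}_t^{i-1}$ in $(\bm{I}_t-\lambda\hat{\paraB{\Phi}}_t)\hat{\paraB{G}}_t(\lambda)=\lambda\bm{I}_t-\paraB{C}_t$ yields, with the convention $H_0\equiv 1$, the recursion $H_i(\lambda)=\lambda H_{i-1}(\lambda)-\gamma_i$; since $\gamma_i=\mathbb{E}[\mathsf{\Lambda}H_{i-1}(\mathsf{\Lambda})]$, this is exactly \eqref{Eqn:variant_poly_recursive_def}, and invertibility of $(\bm{I}_t-\lambda\hat{\paraB{\Phi}}_t)$ shows the ansatz is the unique solution. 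Exactly as in the proof of Lemma~\ref{Lem:FOM_D}, this identity is purely algebraic and therefore survives even though $\hat{\paraB{\Phi}}_t$ is random and correlated with $\bm{W}$.

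For part~(2), I would check that the reformulated recursion is a long-memory OAMP of the type handled in the proof of Theorem~\ref{Th:AMP_SE}. The matrix denoisers $\hat{G}_{i,j}(\bm{W})$ are asymptotically trace-free: $\mathbb{E}_{\mathsf{\Lambda}\sim\mu}[H_n(\mathsf{\Lambda})]=\mathbb{E}[\mathsf{\Lambda}H_{n-1}(\mathsf{\Lambda})]-\gamma_n=0$ for every $n\ge 1$, and since $\hat{\paraB{G}}_t$ involves only $H_1,\ldots,H_t$ we get $\tfrac1N\tr\hat{G}_{i,j}(\bm{W})=\sum_k\tfrac1N\tr H_k(\bm{W})\cdot(\hat{\paraB{\Phi}}_t^{k-1})_{i,j}\to 0$ (each $H_k$ is a fixed polynomial with $\mathbb{E}_\mu[H_k]=0$, and the entries of $\hat{\paraB{\Phi}}_t^{k-1}$ stay bounded). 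The iterate map $\bar{\bm{u}}_{t+1}=\eta_{t+1}(\bm{r}_1,\ldots,\bm{r}_t)-\sum_i\langle\partial_i\bm{u}_{t+1}\rangle\bm{r}_i$ is divergence-free in $(\bm{r}_1,\ldots,\bm{r}_t)$ by construction. Invoking the OAMP state evolution (the long-memory version used in the proof of Theorem~\ref{Th:AMP_SE}) then gives $(\bm{r}_1,\ldots,\bm{r}_t)\wc(\mathsf{R}_1,\ldots,\mathsf{R}_t)$ jointly Gaussian and $\bar{\bm{u}}_j\wc\bar{\mathsf{U}}_j$; the covariance follows from the decoupling $\tfrac1N\bm{r}_s^\UT\bm{r}_t\simeq\sum_{j,k}\big(\tfrac1N\tr\hat{G}_{s,j}(\bm{W})\hat{G}_{t,k}(\bm{W})\big)\tfrac1N\bar{\bm{u}}_j^\UT\bar{\bm{u}}_k$, which in the limit, and after replacing $\hat{\paraB{\Phi}}_t$ by its deterministic limit $\paraB{\Phi}_t$, equals $\big[\mathbb{E}_\mu[\paraB{G}_t(\mathsf{\Lambda})\bar{\paraB{\Delta}}_t\paraB{G}_t(\mathsf{\Lambda})^\UT]\big]_{s,t}$, i.e.\ \eqref{Eqn:Cov-DF_SE_new}.

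The step needing the most care is the passage from the empirical divergence matrix $\hat{\paraB{\Phi}}_t$ to its population counterpart $\paraB{\Phi}_t$ inside the covariance formula, together with the related point that the OAMP reduction must be performed for the data-dependent $\hat{\paraB{\Phi}}_t$ rather than a deterministic matrix. These are precisely the subtleties already resolved in the proofs of Theorem~\ref{Th:AMP_SE} and Lemma~\ref{Lem:FOM_D}: one uses $\langle\partial_i\bm{u}_j\rangle\pc\mathbb{E}[\partial_i\eta_j(\mathsf{R}_1,\ldots,\mathsf{R}_{j-1})]$, which follows from the Gaussian limit of the $\bm{r}$'s plus continuity of $\partial_i\eta_j$, so that $\hat{\paraB{\Phi}}_t\pc\paraB{\Phi}_t$ and hence $\hat{\paraB{G}}_t\to\paraB{G}_t$ entrywise. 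Consequently the bulk of the work is bookkeeping, and the proof can be written to follow that of Theorem~\ref{Th:AMP_SE} almost verbatim, with $(Q_i,\paraB{B}_t)$ replaced throughout by $(H_i,\paraB{C}_t)$. An alternative, essentially equivalent route is to invoke the change of variables relating RI-AMP-DF and RI-AMP (announced in Section~\ref{Sec:generalizations}) and transport the conclusion of Theorem~\ref{Th:AMP_SE}; I would keep the direct argument as the primary proof because it makes the role of the recursion \eqref{Eqn:variant_poly_recursive_def} transparent.
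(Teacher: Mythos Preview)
Your proposal is correct and follows essentially the same approach as the paper's proof. In particular, the paper also derives the identity $\hat{\paraB{G}}_t(\lambda)=(\paraB{I}_t-\lambda\hat{\paraB{\Phi}}_t)^{-1}(\lambda\paraB{I}_t-\paraB{C}_t)$ by stacking the iterates and substituting $\bm{u}=\bar{\bm{u}}+\hat{\paraB{\Phi}}_t\bm{r}$, then matches coefficients of $\hat{\paraB{\Phi}}_t^{i-1}$ to obtain $H_i(\lambda)=\lambda H_{i-1}(\lambda)-\gamma_i$, and defers item~(2) to the same OAMP state-evolution argument used for Theorem~\ref{Th:AMP_SE}.
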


\begin{proof}
See Appendix \ref{App:RI-AMP-DF-reduction}.
\end{proof}

\begin{remark}[{RI-AMP-DF} can implement any GFOM]
The Onsager terms of {RI-AMP-DF} and the original RI-AMP algorithm are slightly different. However, these two algorithms are essentially equivalent up to a proper change of variables. In fact, both {RI-AMP-DF} and RI-AMP are equivalent to a broader class algorithms, namely, the generalized first order methods (GFOM) introduced in \cite{montanari2022statistically,celentano2020estimation}. More details about this reduction can be found in Appendix \ref{App:RI-AMP-DF-GFOM}.
\end{remark}

\subsection{Generalization of RI-AMP: Matrix Processing}\label{Sec:AMP_varying_spectrum}
Motivated by \cite{barbier2023fundamental}, we introduce another variant of RI-AMP that applies a nonlinear processing function on the matrix $\bm{W}$ in each iteration. In the following, we call this algorithm {RI-AMP-MP}. It is shown in \cite{barbier2023fundamental} that adding the matrix processing operation could lead to better signal estimation for spiked models.

\begin{definition}[A variant of RI-AMP: {RI-AMP-MP}]\label{Def:RI_AMP_MP}
Starting from an initialization $\bm{u}_1\in\mathbb{R}^N$, {RI-AMP-MP} proceeds as follows
\BS\label{Eqn:RI-AMP-MP-def}
\begin{align}
\vr_t &= f_t(\mW)\vu_t - \left(\para{e}_{t,1}\bm{u}_1+\para{e}_{t,2}\bm{u}_2+\cdots+\para{e}_{t,t}\bm{u}_t\right), \\
\vu_{t+1} &= \eta_t (\vr_1,\ldots,\bm{r}_t),
\end{align}
\ES
where $f_t:\mathbb{R}\mapsto\mathbb{R}$ is continuous and applied to the eigenvalues of $\bm{W}$ with the eigenvectors unchanged.  Let $\paraB{E}_t\in\mathbb{R}^{t\times t}$ be a matrix collecting the de-biasing coefficients:
\BE\label{Eqn:E_t_def}
\paraB{E}_t:=
\begin{bmatrix}
\para{e}_{1,1}& & & \\
\para{e}_{2,1}& \para{e}_{2,2} & & \\
\vdots & \vdots & \ddots & \\
\para{e}_{t,1}& \para{e}_{t,2}& \cdots & \para{e}_{t,t}
\end{bmatrix}.
\EE 
The de-biasing matrix $\paraB{E}_t$ will be set according to Lemma \ref{The:RI-AMP-MP} below.
\end{definition}

\vspace{3pt}

We make two remarks about the {RI-AMP-MP} algorithm.
\begin{itemize}
\item[(1)] If the matrix processing function $(f_t)_{t\ge1}$ are identical across different iterations, then {RI-AMP-MP} effectively reduces to an RI-AMP with the original rotationally-invariant matrix replaced by $\hat{\bm{W}}:=f(\bm{W})$. Employing an iteration-dependent matrix processing matrix complicates the derivations of the algorithm. 
\item[(2)] In \cite{barbier2023fundamental}, the matrix processing functions $(f_t)_{t\ge1}$ are assumed to be polynomials, and the Onsager terms and the associated state evolution are derived by mapping the corresponding algorithm to certain RI-AMP algorithm together with re-indexing. Here, we consider general continuous matrix processing functions.
\end{itemize}

The following lemma summarizes the reduction of {RI-AMP-MP} to OAMP. Its proof is similar to that of Lemma \ref{Lem:RI_AMP_reformulation} and Lemma \ref{Lem:RI_AMP_zero_trace}, and thus omitted.

\begin{lemma}[Reduction of {RI-AMP-MP} to OAMP]\label{The:RI-AMP-MP}
Let $(\bm{r}_t)_{t\ge1}$ be generated via the {RI-AMP-MP} algorithm and let $(\bar{\bm{u}}_t)_{t\ge1}$ be defined as in \eqref{Eqn:ut_bar_def2}. Then, the following holds for all $t\ge1$:
\begin{itemize}
\item [(1)] For any fixed $\paraB{E}_t$, we have
\BS\label{Eqn:RI-AMP-MP-OAMP}
\begin{align}
\begin{bmatrix}
\vr_1 \\
\vr_2 \\
\vdots \\
\vr_t
\end{bmatrix} 
&=
\begin{bmatrix}
\hat{J}_{1,1}(\bm{W}) &  &  & \\
\hat{J}_{2,1}(\bm{W}) &\hat{J}_{2,2}(\bm{W})  &  & \\
\vdots & \vdots &\ddots  & \\
\hat{J}_{t,1}(\bm{W}) & \hat{J}_{t,2}(\bm{W}) &\cdots &\hat{J}_{t,t}(\bm{W})
\end{bmatrix}
\begin{bmatrix}
\bar{\vu}_1 \\
\bar{\vu}_2 \\
\vdots \\
\bar{\vu}_t 
\end{bmatrix},
\end{align}
where $(\hat{J}_{i,j})_{1\le i,1\le j\le i}$ is a sequence of functions. Let $\hat{\paraB{J}}_t:\mathbb{R}\mapsto\mathbb{R}^{t\times t}$ be a matrix representation of these functions (similar to \eqref{Eqn:Pt_def_first}).
Then, $\hat{\paraB{J}}_t(\lambda)$ can be written into the following explicit form:
\BE\label{Eqn:J_def}
\hat{\paraB{J}}(\lambda):=
\left(\paraB{I}_t-\mathrm{diag}\{f_1(\lambda),\ldots,f_t(\lambda)\}\hat{\paraB{\Phi}}_t +\paraB{E}_t\hat{\paraB{\Phi}}_t \right)^{-1}\left(\mathrm{diag}\{f_1(\lambda),\ldots,f_t(\lambda)\}-\paraB{E}_t\right),\quad\forall \lambda\in\mathbb{R}.
\EE
\ES
\item[(2)] For any $\hat{\paraB{\Phi}}_t\in\mathbb{R}^{t\times t}$, the following equation has a unique solution in $\paraB{E}_t$:
\BE\label{Eqn:E_equation}
\mathbb{E}\left[\hat{\paraB{J}}(\mathsf{\Lambda})\right]=\bm{0}_{t\times t},
\EE 
where the expectation is taken w.r.t. $\mathsf{\Lambda}\sim\mu$, with $\mathsf{\Lambda}\sim\mu$ independent of $\hat{\paraB{\Phi}}_t$.
\end{itemize}
\end{lemma}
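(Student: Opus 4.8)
The plan is to follow the same two-step recipe used for Lemma~\ref{Lem:RI_AMP_reformulation} and Lemma~\ref{Lem:RI_AMP_zero_trace}, working in the eigenbasis of $\bm{W}$ so that the iteration-dependent matrix processing $f_t(\bm{W})$ can be handled eigenvalue-by-eigenvalue.

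\emph{Step 1 (the algebraic reformulation, part (1)).} Write $\bm{W}=\bm{O}\,\mathrm{diag}(\bm{\lambda})\,\bm{O}^\UT$ and pass to the rotated iterates $\tilde{\bm{r}}_i:=\bm{O}^\UT\bm{r}_i$, $\tilde{\bm{u}}_i:=\bm{O}^\UT\bm{u}_i$, $\bar{\tilde{\bm{u}}}_i:=\bm{O}^\UT\bar{\bm{u}}_i$, and collect the $n$-th coordinates of these into vectors $\tilde{\bm{r}}[n],\tilde{\bm{u}}[n],\bar{\tilde{\bm{u}}}[n]\in\mathbb{R}^t$. By the definition \eqref{Eqn:ut_bar_def2} of $\bar{\bm{u}}_i$ together with the bookkeeping in \eqref{Eqn:Phi_def}, one has $\bm{u}_i=\bar{\bm{u}}_i+\sum_{j<i}\langle\partial_j\bm{u}_i\rangle\,\bm{r}_j$, which in coordinates reads $\tilde{\bm{u}}[n]=\bar{\tilde{\bm{u}}}[n]+\hat{\paraB{\Phi}}_t\,\tilde{\bm{r}}[n]$ for every $n$; and the recursion \eqref{Eqn:RI-AMP-MP-def} becomes $\tilde{\bm{r}}[n]=(\paraB{F}(\lambda_n)-\paraB{E}_t)\,\tilde{\bm{u}}[n]$, where $\paraB{F}(\lambda):=\mathrm{diag}\{f_1(\lambda),\ldots,f_t(\lambda)\}$. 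Substituting the first relation into the second and solving for $\tilde{\bm{r}}[n]$ gives $\tilde{\bm{r}}[n]=(\paraB{I}_t-(\paraB{F}(\lambda_n)-\paraB{E}_t)\hat{\paraB{\Phi}}_t)^{-1}(\paraB{F}(\lambda_n)-\paraB{E}_t)\,\bar{\tilde{\bm{u}}}[n]$. Since $\paraB{F}(\lambda)-\paraB{E}_t$ is lower triangular and $\hat{\paraB{\Phi}}_t$ is strictly lower triangular, the matrix $(\paraB{F}(\lambda)-\paraB{E}_t)\hat{\paraB{\Phi}}_t$ is strictly lower triangular, so the inverse exists and equals the finite sum $\sum_{k=0}^{t-1}((\paraB{F}(\lambda)-\paraB{E}_t)\hat{\paraB{\Phi}}_t)^k$; in particular the resulting matrix is lower triangular and each of its entries is a continuous function of $\lambda$. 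Undoing the rotation by $\bm{O}$ yields exactly \eqref{Eqn:RI-AMP-MP-OAMP}, and rewriting $(\paraB{F}(\lambda)-\paraB{E}_t)\hat{\paraB{\Phi}}_t=\paraB{F}(\lambda)\hat{\paraB{\Phi}}_t-\paraB{E}_t\hat{\paraB{\Phi}}_t$ gives the closed form \eqref{Eqn:J_def}. This is the verbatim computation of Lemma~\ref{Lem:RI_AMP_reformulation} with $\bm{W}$ replaced by $f_t(\bm{W})$ in the $t$-th line, so the details will be brief.

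\emph{Step 2 (existence and uniqueness of $\paraB{E}_t$, part (2)).} I would induct on $t$, exploiting that the map $\paraB{E}_t\mapsto\hat{\paraB{J}}(\cdot)$ is nested in a triangular way. Using the push-through identity write $\hat{\paraB{J}}(\lambda)=\paraB{M}(\lambda)\paraB{N}(\lambda)$ with $\paraB{M}(\lambda):=\paraB{F}(\lambda)-\paraB{E}_t$ and $\paraB{N}(\lambda):=(\paraB{I}_t-\hat{\paraB{\Phi}}_t\paraB{M}(\lambda))^{-1}$. Two observations drive the induction: (i) because a block of a product (resp.\ inverse) of lower-triangular matrices is the product (resp.\ inverse) of the blocks, the top-left $(t-1)\times(t-1)$ block of $\hat{\paraB{J}}(\lambda)$ coincides with the matrix $\hat{\paraB{J}}$ of size $t-1$ built from the top-left block $\paraB{E}_{t-1}$ of $\paraB{E}_t$ and from $\hat{\paraB{\Phi}}_{t-1},f_1,\ldots,f_{t-1}$; and (ii) since $\hat{\paraB{\Phi}}_t$ is strictly lower triangular, $\hat{\paraB{\Phi}}_t\paraB{M}(\lambda)$ — hence $\paraB{N}(\lambda)$ — does not involve the last row $(e_{t,1},\ldots,e_{t,t})$ of $\paraB{E}_t$. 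Granting inductively that $\mathbb{E}[\hat{\paraB{J}}_{t-1}(\mathsf{\Lambda})]=\bm{0}$ has a unique solution $\paraB{E}_{t-1}$, observation (i) forces the top-left block of any solution $\paraB{E}_t$ of \eqref{Eqn:E_equation} to equal this $\paraB{E}_{t-1}$; then, by (ii), the last row of $\mathbb{E}[\hat{\paraB{J}}(\mathsf{\Lambda})]$ is the affine function $\mathbb{E}[f_t(\mathsf{\Lambda})\,\paraB{N}_{t,:}(\mathsf{\Lambda})]-(e_{t,1},\ldots,e_{t,t})\,\mathbb{E}[\paraB{N}(\mathsf{\Lambda})]$ of the remaining unknowns, and since $\paraB{N}(\lambda)$ is unipotent lower triangular, $\mathbb{E}[\paraB{N}(\mathsf{\Lambda})]$ is unipotent lower triangular, hence invertible; this gives a unique last row, completing both uniqueness and (by running the construction) existence. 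The base case $t=1$ is $\hat{\paraB{J}}_1(\lambda)=f_1(\lambda)-e_{1,1}$, solved uniquely by $e_{1,1}=\mathbb{E}[f_1(\mathsf{\Lambda})]$. All expectations are finite because each $f_i$ is continuous and $\mu$ is compactly supported.

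\emph{Main obstacle.} The genuine difficulty is that $\paraB{E}_t\mapsto\mathbb{E}[\hat{\paraB{J}}(\mathsf{\Lambda})]$ is a high-degree polynomial map in the entries of $\paraB{E}_t$ (because of the matrix inverse), so a bare dimension count does not establish invertibility; the triangular decoupling in Step~2 is what reduces the nonlinear system to a sequence of affine systems with invertible (unipotent) coefficient matrices, one row at a time. A secondary point to address is that $\hat{\paraB{\Phi}}_t$ is in general correlated with $\bm{W}$, since it records empirical divergences; this is harmless because Step~1 is purely algebraic and conditions on $\hat{\paraB{\Phi}}_t$, while the $\paraB{E}_t$ produced in Step~2 is an explicit deterministic function of $\hat{\paraB{\Phi}}_t$, $\mu$ and $(f_i)_{i\le t}$ — exactly as in the passage from Lemma~\ref{Lem:RI_AMP_zero_trace} to Lemma~\ref{Lem:FOM_D}.
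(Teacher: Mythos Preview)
Your proposal is correct and follows essentially the same route as the paper, which simply states that the proof is similar to that of Lemma~\ref{Lem:RI_AMP_reformulation} and Lemma~\ref{Lem:RI_AMP_zero_trace} and omits the details. Your use of the push-through identity to write $\hat{\paraB{J}}(\lambda)=\paraB{M}(\lambda)\paraB{N}(\lambda)$ is a minor but clean variant of the block-partition computation in Appendix~\ref{App:zero_trace}; both arguments exploit the same triangular nesting to reduce the nonlinear system \eqref{Eqn:E_equation} to a sequence of affine equations, one row of $\paraB{E}_t$ at a time, with unipotent (hence invertible) coefficient matrices.
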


Some comments are in order:
\begin{itemize} 
\item[(1)] It seems that the de-biasing matrix $\paraB{E}_t$ in {RI-AMP-MP}, determined by \eqref{Eqn:E_equation}, cannot be expressed as a simple polynomial of $\hat{\paraB{\Phi}}_t$. While we can solve \eqref{Eqn:E_equation} for $\paraB{E}_t$ recursively (namely, row-by-row), as in the proof of Lemma \ref{Lem:RI_AMP_zero_trace}-(1), the non-commutativity of $\text{diag}\{f_1(\lambda),\ldots,f_t(\lambda)\}$ and $\hat{\paraB{\Phi}}_t$ precludes a simple polynomial form.
\item[(2)] Similar to {RI-AMP-DF}, we may use an Onsager term that is a linear combination of the divergence-free estimates. In this case, the de-biasing matrix can be expressed explicitly, but still cannot be be written as a simple polynomial of $\hat{\paraB{\Phi}}_t$.
\item[(3)] The state evolution equations of {RI-AMP-MP} can be readily derived from the representation \eqref{Eqn:RI-AMP-MP-OAMP}. We omit the details here.
\end{itemize}

\subsection{Application: Spiked Matrix Model}

Until now, we have focused on the derivation of general AMP iterations. In this section, we illustrate the application of our approach to one concrete signal estimation problem, namely, spiked models. AMP algorithms have been instrumental to the theoretical underpinnings of spiked models \cite{rangan2012iterative,parker2014bilinear,kabashima2016phase,montanari2021estimation,li2022non,li2023approximate}. We shall show that our approach can conveniently recover and generalize existing AMP algorithms in the context of spiked models.

Consider a \textit{spiked matrix model} where the observation matrix $\bm{Y}\in\mathbb{R}^{N\times N}$ reads:
\BE\label{Eqn:spiked_model}
\bm{Y}=\frac{\theta}{N}\bm{x}_\star\bm{x}_\star^\UT+\bm{W}.
\EE
In the above equation, $\bm{x}_\star\in\mathbb{R}^N$ is the signal vector to be estimated, $\theta>0$ is a parameter that dictates the SNR, and $\bm{W}$ is a noise matrix which we assume to be symmetric and rotationally-invariant. We first recall the definitions of the OAMP algorithm introduced in \cite{dudeja2024optimality} for spiked models. Note that compared with the OAMP algorithms in \eqref{Eqn:LM_OAMP_recall}, the only difference is that the matrix $\bm{W}$ in \eqref{Eqn:LM_OAMP_recall} is replaced by the observation matrix $\bm{Y}$. 

\begin{definition}[OAMP algorithm for spiked models \cite{dudeja2024optimality}]\label{Def:OAMP_spiked}
Starting from an initialization $\bar{\bm{x}}_1\in\mathbb{R}^N$, an orthogonal AMP algorithm for the spiked model \eqref{Eqn:spiked_model} proceeds as follows:
\BS\label{Eqn:LM_OAMP_spiked}
\begin{align}
\bm{x}_t &= \left(f_t(\bm{Y})-\frac{\text{tr}\left(f_t(\bm{Y})\right)}{N}\cdot\bm{I}_N\right)\bar{\bm{x}}_{t},\\
\bar{\bm{x}}_{t+1} &= g_{t+1} (\bm{x}_1,\ldots,\bm{x}_t;\bm{a})-\sum_{i=1}^t \langle\partial_i  g_{t+1} (\bm{x}_1,\ldots,\bm{x}_t;\bm{a})\rangle\cdot \bm{x}_i,
\end{align}
\ES
where $f_{t}:\mathbb{R}\mapsto\mathbb{R}$ and $g_{t+1}:\mathbb{R}^t\times\mathbb{R}^k\mapsto\mathbb{R}$ are defined similarly as in Definition \ref{Def:LM_OAMP}.
\end{definition}

The above definition of OAMP is slightly different from that in \cite{dudeja2024optimality} in the sense that the empirical estimates $\text{tr}\left(f_t(\bm{Y})\right)/N$ and $\langle\partial_i  g_{t+1} (\bm{z}_1,\ldots,\bm{z}_t;\bm{a})\rangle$ are involved in \eqref{Eqn:LM_OAMP_spiked}, whereas the OAMP in \cite{dudeja2024optimality} uses their corresponding large-$N$ limits. Despite this difference, the state evolution result in \cite{dudeja2024optimality} still applies and we record the results in the following lemma. (This can be shown following the same arguments as in the proof of Theorem \ref{The:OAMP_SE}, and we omit the details.) We also refer the reader to \cite[Section~4.1]{dudeja2024optimality} for a heuristic derivation of the state evolution equations.

\begin{lemma}[State evolution of OAMP for spiked models \cite{dudeja2024optimality}]\label{Lem:OAMP_spiked}
Let $(\bm{x}_t)_{t\in\mathbb{N}}$ be generated via \eqref{Eqn:LM_OAMP_spiked}. Suppose Assumption \ref{Ass:LM_OAMP} holds. Assume additionally that $(\bm{x}_\star;\bm{a})\overset{W_2}{\longrightarrow}(\mathsf{X}_\star;\mathsf{A})\sim\pi$ where $\mathbb{E}[\mathsf{X}_\star^2]=1$ and $\mathbb{E}[\|\mathsf{A}\|^2]<\infty$. Then, the following holds for all $t\ge1$: 
\BS
\BE
(\bm{x}_\star,\bm{x}_1,\ldots,\bm{x}_t;\bm{a})\overset{W_2}{\longrightarrow}(\mathsf{X}_\star,\mathsf{X}_1,\ldots,\mathsf{X}_t;\mathsf{A}),
\EE
where $(\mathsf{X}_\star,\mathsf{A})\sim\pi$ and 
\begin{align}
\left[\mathsf{X}_1,\mathsf{X}_2,\ldots,\mathsf{X}_t\right]^\UT
=
\paraB{\beta}_t
\mathsf{X}_\star+
\left[\mathsf{Z}_1,\mathsf{Z}_2,\ldots,\mathsf{Z}_t\right]^\UT,
\end{align}
where $[\mathsf{Z}_1,\ldots,\mathsf{Z}_t]\sim\mathcal{N}(\bm{0},\paraB{\Omega}_{t,1}+\paraB{\Omega}_{t,2})$ is independent of $\mathsf{X}_\star$, and $(\paraB{\beta}_t,\paraB{\Omega}_{t,1},\paraB{\Omega}_{t,2})$ are given by
\begin{align}
\paraB{\beta}_t&=
\begin{bmatrix}
\mathbb{E}\left[\bar{f}_1(\mathsf{\Lambda}_\nu)\right]\\
%\mathbb{E}\left[\bar{f}_2(\mathsf{\Lambda}_\nu)\right]\\
\vdots\\
\mathbb{E}\left[\bar{f}_t(\mathsf{\Lambda}_\nu)\right]
\end{bmatrix}\circ
\paraB{\alpha}_t,\\
\paraB{\Omega}_{t,1}&=
\begin{bmatrix}
\mathrm{Cov}_\nu\left[\bar{f}_1,\bar{f}_1\right]   &  \cdots & \mathrm{Cov}_\nu\left[\bar{f}_1,\bar{f}_t\right]    \\
%\mathrm{Cov}_\nu\left[\bar{f}_2,\bar{f}_1\right]    &  \cdots & \mathrm{Cov}_\nu\left[\bar{f}_2,\bar{f}_t\right]   \\
\vdots& \ddots  & \vdots \\
\mathrm{Cov}_\nu\left[\bar{f}_t,\bar{f}_1\right]   &  \cdots & \mathrm{Cov}_\nu\left[\bar{f}_t,\bar{f}_t\right]   
\end{bmatrix}
\circ(\paraB{\alpha}_t\paraB{\alpha}_t^\UT),\\
\paraB{\Omega}_{t,2}&=
\begin{bmatrix}
\mathrm{Cov}_\mu\left[\bar{f}_1,\bar{f}_1\right]   &  \cdots & \mathrm{Cov}_\mu\left[\bar{f}_1,\bar{f}_t\right]    \\
%\mathrm{Cov}_\mu\left[\bar{f}_2,\bar{f}_1\right]    &  \cdots & \mathrm{Cov}_\mu\left[\bar{f}_2,\bar{f}_t\right]   \\
\vdots& \ddots  & \vdots \\
\mathrm{Cov}_\mu\left[\bar{f}_t,\bar{f}_1\right]   &  \cdots & \mathrm{Cov}_\mu\left[\bar{f}_t,\bar{f}_t\right]   
\end{bmatrix}
\circ(\bar{\paraB{\Delta}}_t-\paraB{\alpha}_t\paraB{\alpha}_t^\UT),
\end{align}
where $\bar{f}_i(\lambda):=f_i(\lambda)-\mathbb{E}_{\mathsf{\Lambda}\sim\mu}[\mathsf{\Lambda}],\forall \lambda\in\mathbb{R},i\in[t]$ and $\mathrm{Cov}_\mu\left[\bar{f}_i,\bar{f}_j\right]$ is defined as in Theorem \ref{The:OAMP_SE}, and $(\paraB{\alpha}_t,\bar{\paraB{\Delta}}_t)$ are defined as
\BE
\paraB{\alpha}_t =
\begin{bmatrix}
\mathbb{E}\left[\mathsf{X}_\star\bar{\mathsf{X}}_1\right]\\
%\mathbb{E}\left[\bar{f}_2(\mathsf{\Lambda}_\nu)\right]\\
\vdots\\
\mathbb{E}\left[\mathsf{X}_\star\bar{\mathsf{X}}_t\right]
\end{bmatrix}\quad \text{and}\quad
\bar{\paraB{\Delta}}_t =
\begin{bmatrix}
\mathbb{E}\left[\bar{\mathsf{X}}_1\bar{\mathsf{X}}_1\right]   &  \cdots & \mathbb{E}\left[\bar{\mathsf{X}}_1\bar{\mathsf{X}}_t\right]  \\
%\langle\partial_1 \bm{u}_3\rangle& \langle\partial_2 \bm{u}_3 \rangle &0 & &\\
\vdots& \ddots  & \vdots \\
\mathbb{E}\left[\bar{\mathsf{X}}_t\bar{\mathsf{X}}_1\right]  & \cdots  & \mathbb{E}\left[\bar{\mathsf{X}}_t\bar{\mathsf{X}}_t\right] 
\end{bmatrix}.
\EE
In the above equations, $\nu$ is a probability measure whose Stieltjes transform (defined as $m_\nu(z):=\int_{\mathbb{R}}\frac{\nu(\mathrm{d}\lambda)}{z-\lambda},\forall z\in\mathbb{C}\backslash\mathbb{R}$) is given by:
\BE
m_\nu(z)=\frac{m_\mu(z)}{1-\theta m_\mu(z)},\quad\forall z\in\mathbb{C}\backslash\mathbb{R}.
\EE
Further,
\BE
\bar{\mathsf{X}}_{t}:=g_{t}(\mathsf{X}_1,\ldots,\mathsf{X}_{t-1};\mathsf{A})-\sum_{i=1}^{t-1}\mathbb{E}\left[\partial_i g_{t}(\mathsf{X}_1,\ldots,\mathsf{X}_{t-1};\mathsf{A})\right]\cdot \mathsf{X}_i,\quad\forall t\ge2.
\EE
\ES
\end{lemma}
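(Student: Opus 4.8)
The plan is to follow the same conditioning-based argument as in the proof of Theorem~\ref{The:OAMP_SE}, with additional bookkeeping to track the rank-one spike. Write $\bm{v}:=\bm{x}_\star/\sqrt{N}$, so that $\bm{Y}=\bm{W}+\theta\bm{v}\bm{v}^\UT$, where $\bm{v}$ is independent of the Haar matrix $\bm{O}$ (the spike enters only through the deterministic instance $\bm{x}_\star$) and $\tfrac1N\|\bm{x}_\star\|^2=\|\bm{v}\|^2\to\mathbb{E}[\mathsf{X}_\star^2]=1$. Since $\mu$ is compactly supported and $\|\bm{W}\|_{\op}$ is bounded by an $N$-independent constant, all eigenvalues of $\bm{Y}$ lie in a fixed compact interval with high probability (the single outlier is at most $\|\bm{W}\|_{\op}+\theta\|\bm{v}\|^2+o(1)$). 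Hence each continuous $f_t$ is uniformly approximable there by polynomials, and a standard Lipschitz/approximation argument (as in the proof of Theorem~\ref{The:OAMP_SE}) reduces the claim to the case of polynomial $f_t$.

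For polynomial $f_t$, expand $f_t(\bm{Y})$ through the noncommutative binomial expansion of the powers of $\bm{Y}=\bm{W}+\theta\bm{v}\bm{v}^\UT$ and group the terms by the number of $\bm{v}\bm{v}^\UT$ factors. Collapsing consecutive rank-one factors turns every such term into a product of scalars of the form $\bm{v}^\UT\bm{W}^{a}\bm{v}$, which concentrate on the moments $m_a$ of $\mu$ because $\bm{O}^\UT\bm{v}$ is uniform on a sphere and independent of $\bm{\lambda}$, multiplied by ``edge'' vectors $\bm{W}^{b}\bm{v}$. Resumming the resulting series, equivalently applying the Sherman--Morrison identity to the resolvent, yields $\bm{v}^\UT\phi(\bm{Y})\bm{v}\to\int\phi\,\mathrm{d}\nu=\mathbb{E}[\phi(\mathsf{\Lambda}_\nu)]$ for any fixed polynomial $\phi$, where $\nu$ is exactly the measure with $m_\nu(z)=m_\mu(z)/(1-\theta m_\mu(z))$. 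More generally one obtains, up to $o(1)$ error in the $\tfrac1{\sqrt{N}}\|\cdot\|$-metric, a decomposition of $f_t(\bm{Y})\bar{\bm{x}}_t$ into a component along $\bm{x}_\star$ with coefficient $\mathbb{E}[f_t(\mathsf{\Lambda}_\nu)]\cdot\tfrac1N\bm{x}_\star^\UT\bar{\bm{x}}_t$ plus a ``bulk'' component whose law, jointly with $(\bm{x}_\star,\bm{a},\bm{x}_1,\dots,\bm{x}_{t-1})$, is governed by $\mu$ on the part of $\bar{\bm{x}}_t$ orthogonal to $\bm{x}_\star$ and by $\nu$ on the part aligned with $\bm{x}_\star$. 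After subtracting $\tfrac1N\tr f_t(\bm{Y})\to\mathbb{E}_\mu[f_t(\mathsf{\Lambda})]$ (the trace is dominated by the bulk), the signal coefficient becomes $\mathbb{E}[\bar{f}_t(\mathsf{\Lambda}_\nu)]\cdot\tfrac1N\bm{x}_\star^\UT\bar{\bm{x}}_t$, whose limit $\mathbb{E}[\bar{f}_t(\mathsf{\Lambda}_\nu)]\,\alpha_t$ is the $t$-th entry of $\paraB{\beta}_t$.

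The remainder is an induction on $t$ (the base case $t=1$ being immediate since $\bar{\bm{x}}_1$ is independent of $\bm{O}$) using the Haar-measure conditioning technique of Rangan--Schniter--Fletcher and Takeuchi, as in the proof of Theorem~\ref{The:OAMP_SE}. Assume inductively that $(\bm{x}_\star,\bm{x}_1,\dots,\bm{x}_{t-1};\bm{a})$ converges to the claimed ``Gaussian $+$ signal'' limit. Condition on the $\sigma$-algebra generated by $\bm{x}_\star,\bm{a},\bm{x}_1,\dots,\bm{x}_{t-1}$ together with the linear constraints these impose on $\bm{O}$; under this conditioning $\bm{Y}$ is again, on the orthogonal complement of $\Span\{\bm{x}_1,\dots,\bm{x}_{t-1}\}$, of the spiked form with the same bulk $\mu$ and with the signal overlaps correctly propagated, so the matrix-denoising output $\bm{x}_t=(f_t(\bm{Y})-\tfrac1N\tr f_t(\bm{Y}))\bar{\bm{x}}_t$ splits into a part fixed by the conditioning, which contributes $\beta_t\mathsf{X}_\star$, and a fresh-Gaussian part. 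Its covariance with each $\bm{x}_s$, $s<t$, computed from the scalar-concentration identities of the previous paragraph, separates into the contribution of the $\bm{x}_\star$-aligned component of the denoised iterates (which carries $\mathrm{Cov}_\nu[\bar{f}_i,\bar{f}_j]$ and scales with $\paraB{\alpha}_t\paraB{\alpha}_t^\UT$, producing $\paraB{\Omega}_{t,1}$) and that of their orthogonal component (which carries $\mathrm{Cov}_\mu[\bar{f}_i,\bar{f}_j]$ and scales with $\bar{\paraB{\Delta}}_t-\paraB{\alpha}_t\paraB{\alpha}_t^\UT$, producing $\paraB{\Omega}_{t,2}$). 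The divergence-free denoiser step $\bar{\bm{x}}_{t+1}=g_{t+1}(\cdots)-\sum_i\langle\partial_i g_{t+1}\rangle\bm{x}_i$ then preserves this structure and updates $\paraB{\alpha}$ and $\bar{\paraB{\Delta}}$ via the multivariate Stein lemma, exactly as in Proposition~\ref{Pro:orthogonality} and Theorem~\ref{The:OAMP_SE}.

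The main obstacle is to make the rank-one peeling rigorous and uniform in $t$: the operator $f_t(\bm{Y})-f_t(\bm{W})$ has bounded but $\deg f_t$-dependent rank, and one must show that the many sub-leading terms -- products of fluctuations of the scalars $\bm{v}^\UT\bm{W}^a\bm{v}$ about $m_a$, and cross terms coupling the signal direction to the bulk -- are negligible after the $\tfrac1{\sqrt{N}}\|\cdot\|$ normalization and after trace-free centering, with errors that do not accumulate over the finitely many iterations. As in all AMP proofs, the truly delicate point is the statistical dependence between the iterates $\bar{\bm{x}}_t$ and $\bm{Y}$; the conditioning induction is what controls it, but one must verify carefully that the conditional law of $\bm{Y}$ is again of spiked type with the correct parameters, which is where most of the work -- borrowed from the proof of Theorem~\ref{The:OAMP_SE} and from \cite{dudeja2024optimality} -- resides.
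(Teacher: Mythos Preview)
The paper does not prove this lemma from scratch: it is recorded as a result of \cite{dudeja2024optimality}, and the only argument the paper supplies is the short approximation step (identical to the one in Appendix~\ref{App:proof_OAMP}) bridging the empirical trace and divergence terms $\tfrac1N\tr f_t(\bm{Y})$, $\langle\partial_i g_{t+1}\rangle$ in \eqref{Eqn:LM_OAMP_spiked} with their deterministic limits used in \cite{dudeja2024optimality}. That is the entirety of the paper's ``proof'' of this lemma.

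Your proposal is therefore a genuinely different and much more ambitious enterprise: you are sketching a from-scratch proof of the underlying state evolution from \cite{dudeja2024optimality}, via polynomial approximation, rank-one peeling of $f_t(\bm{Y})$ around $\bm{W}$, Sherman--Morrison resummation to produce $\nu$, and a Haar-conditioning induction. The ingredients you list are the right ones, and the decomposition of $\bar{\bm{x}}_t$ into its $\bm{x}_\star$-aligned and orthogonal parts, carrying $\nu$ and $\mu$ respectively, is exactly how $\paraB{\Omega}_{t,1}$ and $\paraB{\Omega}_{t,2}$ arise. But as you yourself flag, the delicate parts---controlling the sub-leading cross terms after peeling, and verifying that the conditional law of $\bm{Y}$ on the orthogonal complement is again spiked with the correct parameters---are precisely the substantive content of \cite{dudeja2024optimality}, and your sketch does not supply them. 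If the goal is to match the paper, you should instead invoke \cite{dudeja2024optimality} directly and then reproduce the short $\|\bm{x}_t-\bm{w}_t\|^2/N\to 0$ argument of Appendix~\ref{App:proof_OAMP} to pass from population to empirical centering; that is all the paper does here.
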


\begin{remark}[On the measure $\nu$] 
The state evolution equations in the above theorem involve a probability measure $\nu$. As shown in \cite{dudeja2024optimality}, the measure $\nu$ is the large-$N$ limit of the following empirical measure:
\BE
\nu_N:=\frac{1}{N}\sum_{i=1}^N \left(\bm{x}_\star^\UT\bm{u}_i(\bm{Y})\right)^2\cdot \delta_{\lambda_i(\bm{Y})},
\EE
where $(\lambda_i(\bm{Y}))_{1\le i\le N}$ and $(\bm{u}_i(\bm{Y}))_{1\le i\le N}$ denote the eigenvalues and eigenvectors of $\bm{Y}$ respectively. For more explicit characterization of $\nu$ by Stieltjes inversion see \cite[Appendix A.1]{dudeja2024optimality}.
\end{remark}

We can now apply our approach to derive AMP algorithms by reduction to OAMP algorithms. We consider a variant of {RI-AMP-MP} where the matrix processing function does not change across iterations. 

\begin{definition}[{RI-AMP-MP} algorithm for spiked models]
Starting from an initialization $\bm{u}_1\in\mathbb{R}^N$, {RI-AMP-MP} generates a sequence of iterates via
\BS\label{Eqn:RI-AMP-MP-spiked}
\begin{align}
\vr_t &= f(\bm{Y})\vu_t - \left(\para{e}_{t,1}\bm{u}_1+\para{e}_{t,2}\bm{u}_2+\cdots+\para{e}_{t,t}\bm{u}_t\right),\\
\vu_{t+1} &= \eta_t (\vr_1,\ldots,\bm{r}_t),
\end{align}
\ES
where $f:\mathbb{R}\mapsto\mathbb{R}$ is a continuous function that does not change across iterations, and the de-biasing matrix $\paraB{E}_t\in\mathbb{R}^{t\times t}$ is given by
\BE
\paraB{E}_t=\sum_{i=1}^t \tilde{\alpha}_i (\hat{\paraB{\Phi}}_t)^{i-1},
\EE
where $\hat{\paraB{\Phi}}_t$ is defined as in \eqref{Eqn:Phi_def} and $(\tilde{\alpha}_i)_{i\in\mathbb{N}}$ are the free cumulants of the random variable $f(\mathsf{\Lambda})$, $\mathsf{\Lambda}\sim\mu$.
\end{definition}

\begin{remark}[Connection with \cite{barbier2023fundamental}]
The above algorithm is essentially the ``Bayes-optimal AMP'' (BAMP) algorithm proposed in \cite{barbier2023fundamental}. It generalizes BAMP in the sense that the matrix processing function $f(\cdot)$ can be non-polynomial. 
\end{remark}

Again, we can use the same technique to reduce the above {RI-AMP-MP} algorithm to an OAMP algorithm, and then apply Lemma \ref{Lem:OAMP_spiked} to derive a state evolution for RI-AMP-MP. Our results are summarized in the following theorem. 

\begin{theorem}[State evolution of {RI-AMP-MP} for spiked models]\label{The:RI-AMP-MP-SE}
Let $(\bm{r}_t)_{t\ge1}$ be generated via \eqref{Eqn:RI-AMP-MP-spiked} and let $(\bar{\bm{u}}_t)_{t\ge1}$ be defined as in \eqref{Eqn:ut_bar_def2}. Then, the following statements hold. 
\begin{itemize}
\item[(1)] For all $t\ge1$:
\BS\label{Eqn:RI-AMP-MP-spiked-reduction}
\begin{align}
\begin{bmatrix}
\vr_1 \\
\vr_2 \\
\vdots \\
\vr_t
\end{bmatrix} 
&=
\begin{bmatrix}
\hat{J}_{1,1}(\bm{Y}) &  &  & \\
\hat{J}_{2,1}(\bm{Y}) &\hat{J}_{2,2}(\bm{Y})  &  & \\
\vdots & \vdots &\ddots  & \\
\hat{J}_{t,1}(\bm{Y}) & \hat{J}_{t,2}(\bm{Y}) &\cdots &\hat{J}_{t,t}(\bm{Y})
\end{bmatrix}
\begin{bmatrix}
\bar{\vu}_1 \\
\bar{\vu}_2 \\
\vdots \\
\bar{\vu}_t 
\end{bmatrix}.
\end{align}
Let $\hat{\paraB{J}}_t:\mathbb{R}\mapsto\mathbb{R}^{t\times t}$ be a matrix representation of the functions $(\hat{J}_{i,j})_{1\le i\le t,1\le j\le i}$. Then, $\hat{\paraB{J}}_t(\lambda)$ can be written into the following explicit form:
\BE\label{Eqn:Pt_Phi_t_lemma}
\hat{\paraB{J}}_t(\lambda)=\sum_{i=1}^t K_i(\lambda)\hat{\paraB{\Phi}}_t^{i-1},\quad\forall \lambda\in\mathbb{R},
\EE
where the sequence of functions $(K_n)_{n\ge1}$ is specified by the recurrence (with $ K_0(\lambda):=1,\forall\lambda\in\mathbb{R}$)
\BE\label{Eqn:poly_spiked_recursive}
K_n(\lambda)=f(\lambda) K_{n-1}(\lambda) - \sum_{i=1}^n \mathbb{E}_{\mathsf{\Lambda}\sim\mu}\left[f(\mathsf{\Lambda})  K_{i-1}(\mathsf{\Lambda})\right]\cdot K_{n-i}(\lambda),\quad\forall \lambda\in\mathbb{R},\ n\ge1.
\EE
\ES
\item[(2)] Suppose that Assumption \ref{Ass:RI-AMP} holds. Further, assume that $\bm{x}_\star\overset{W_2}{\longrightarrow} \mathsf{X}_\star$ where $\mathbb{E}[\mathsf{X}_\star^2]=1$. As $N\to\infty$:
\BS
\BE
(\bm{x}_\star,\bm{r}_1,\ldots,\bm{r}_t)\overset{W_2}{\longrightarrow}(\mathsf{X}_\star,\mathsf{R}_1,\ldots,\mathsf{R}_t),\quad\forall t\ge1,
\EE
where
\begin{align}
\left[\mathsf{R}_1,\mathsf{R}_2,\ldots,\mathsf{R}_t\right]^\UT
=
\paraB{\beta}_t
\mathsf{X}_\star+
\left[\mathsf{Z}_1,\mathsf{Z}_2,\ldots,\mathsf{Z}_t\right]^\UT,
\end{align}
where $[\mathsf{Z}_1,\ldots,\mathsf{Z}_t]\sim\mathcal{N}(\bm{0},\paraB{\Sigma}_{t,1}+\paraB{\Sigma}_{t,2})$ is independent of $\mathsf{X}_\star$, and $(\paraB{\beta}_t,\paraB{\Sigma}_{t,1},\paraB{\Sigma}_{t,2})$ are given by
\begin{align}
\paraB{\beta}_t&=
\mathbb{E}_{\mathsf{\Lambda}_\nu\sim\nu}\left[\paraB{J}_t(\mathsf{\Lambda}_\nu)\right]
\paraB{\alpha}_t,\\
\paraB{\Sigma}_{t,1}&=
\mathbb{E}_{\mathsf{\Lambda}_\nu\sim\nu}\left[\paraB{J}_t(\mathsf{\Lambda}_\nu)\paraB{\alpha}_t\paraB{\alpha}_t^\UT\paraB{J}_t(\mathsf{\Lambda}_\nu)^\UT\right]-\paraB{\beta}_t\paraB{\beta}_t^\UT,\\
\paraB{\Sigma}_{t,2}&=
\mathbb{E}_{\mathsf{\Lambda}\sim\mu}\left[\paraB{J}_t(\mathsf{\Lambda})(\bar{\paraB{\Delta}}_t-\paraB{\alpha}_t\paraB{\alpha}_t^\UT)\paraB{J}_t(\mathsf{\Lambda})^\UT\right],
\end{align}
where ${\paraB{J}}_t(\lambda):=\sum_{i=1}^t K_i(\lambda){\paraB{\Phi}}_t^{i-1},\,\forall \lambda\in\mathbb{R}$, the probability measure $\nu$ is defined in Lemma \ref{Lem:OAMP_spiked}, and $(\paraB{\alpha}_t,\bar{\paraB{\Delta}}_t)$ are defined as
\BE
\paraB{\alpha}_t =
\begin{bmatrix}
\mathbb{E}\left[\mathsf{X}_\star\bar{\mathsf{X}}_1\right]\\
%\mathbb{E}\left[\bar{f}_2(\mathsf{\Lambda}_\nu)\right]\\
\vdots\\
\mathbb{E}\left[\mathsf{X}_\star\bar{\mathsf{X}}_t\right]
\end{bmatrix}\quad \text{and}\quad
\bar{\paraB{\Delta}}_t =
\begin{bmatrix}
\mathbb{E}\left[\bar{\mathsf{X}}_1\bar{\mathsf{X}}_1\right]   &  \cdots & \mathbb{E}\left[\bar{\mathsf{X}}_1\bar{\mathsf{X}}_t\right]  \\
%\langle\partial_1 \bm{u}_3\rangle& \langle\partial_2 \bm{u}_3 \rangle &0 & &\\
\vdots& \ddots  & \vdots \\
\mathbb{E}\left[\bar{\mathsf{X}}_t\bar{\mathsf{X}}_1\right]  & \cdots  & \mathbb{E}\left[\bar{\mathsf{X}}_t\bar{\mathsf{X}}_t\right] 
\end{bmatrix}.
\EE
\ES
\end{itemize}
\end{theorem}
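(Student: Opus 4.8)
The plan is to deduce the theorem by combining three facts already available: the purely algebraic reduction of Lemma~\ref{The:RI-AMP-MP}, specialized to a matrix-processing function $f$ that does not vary across iterations; the recursive characterization of free cumulants in Proposition~\ref{Lem:cumulants}; and the state evolution of spiked OAMP in Lemma~\ref{Lem:OAMP_spiked}. The guiding observation is that, when $f_t\equiv f$, the iteration \eqref{Eqn:RI-AMP-MP-spiked} is structurally an RI-AMP run with $\bm{W}$ replaced by $\hat{\bm{W}}:=f(\bm{Y})$; since the rank-one spike leaves the limiting spectral distribution of $\bm{Y}$ equal to $\mu$, the correct spectral law for the centering step is the pushforward $f_{*}\mu$, whose free cumulants are precisely the coefficients $(\tilde{\alpha}_i)$.

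For Part~(1), I would apply Lemma~\ref{The:RI-AMP-MP} with $f_1=\cdots=f_t=f$. Because $\mathrm{diag}\{f(\lambda),\dots,f(\lambda)\}=f(\lambda)\paraB{I}_t$ commutes with $\hat{\paraB{\Phi}}_t$, the explicit form \eqref{Eqn:J_def} collapses to $\hat{\paraB{J}}_t(\lambda)=(\paraB{I}_t-f(\lambda)\hat{\paraB{\Phi}}_t+\paraB{E}_t\hat{\paraB{\Phi}}_t)^{-1}(f(\lambda)\paraB{I}_t-\paraB{E}_t)$; expanding the inverse via $(\paraB{I}_t-\paraB{A})^{-1}=\sum_{i=1}^t\paraB{A}^{i-1}$ for strictly lower-triangular $\paraB{A}$ (an algebraic identity, hence valid even though $\hat{\paraB{\Phi}}_t$ is random and correlated with $\bm{Y}$) gives $\hat{\paraB{J}}_t(\lambda)=\sum_{i=1}^t(f(\lambda)\hat{\paraB{\Phi}}_t-\paraB{E}_t\hat{\paraB{\Phi}}_t)^{i-1}(f(\lambda)\paraB{I}_t-\paraB{E}_t)$. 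Now substitute $\paraB{E}_t=\sum_i\tilde{\alpha}_i\hat{\paraB{\Phi}}_t^{i-1}$ and set $K_i:=\tilde{Q}_i\circ f$, where $\tilde{Q}_i$ is the polynomial of Lemma~\ref{Lem:RI_AMP_zero_trace}-(2) associated to the measure $f_{*}\mu$; composing the $\tilde{Q}$-recursion with $f$ shows that $K_i$ satisfies exactly \eqref{Eqn:poly_spiked_recursive}, while Proposition~\ref{Lem:cumulants} applied to $f(\mathsf{\Lambda})$, $\mathsf{\Lambda}\sim\mu$, gives $\tilde{\alpha}_i=\mathbb{E}_\mu[f(\mathsf{\Lambda})K_{i-1}(\mathsf{\Lambda})]$. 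With this choice, repeating the algebra of Lemma~\ref{Lem:RI_AMP_zero_trace}-(2) and Lemma~\ref{Lem:FOM_D} verbatim --- replacing $\lambda$ by $f(\lambda)$ and $\kappa_i$ by $\tilde{\alpha}_i$ --- collapses the double sum to $\hat{\paraB{J}}_t(\lambda)=\sum_{i=1}^t K_i(\lambda)\hat{\paraB{\Phi}}_t^{i-1}$, which is \eqref{Eqn:Pt_Phi_t_lemma}; and since $\mathbb{E}_\mu[K_i(\mathsf{\Lambda})]=0$ for $i\ge1$ (the $\tilde{Q}_i$ are trace-free for $f_{*}\mu$, by an immediate induction on their recursion), taking $\mathbb{E}_\mu$ yields $\mathbb{E}_\mu[\hat{\paraB{J}}_t(\mathsf{\Lambda})]=\bm{0}$, so by the uniqueness in Lemma~\ref{The:RI-AMP-MP}-(2) this $\paraB{E}_t$ is the solution of \eqref{Eqn:E_equation}.

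For Part~(2), note first that $\bm{r}_t=\sum_{j=1}^t\hat{J}_{t,j}(\bm{Y})\bar{\bm{u}}_j$ with $\hat{J}_{t,j}(\lambda)=\sum_i K_i(\lambda)(\hat{\paraB{\Phi}}_t^{i-1})_{t,j}$, and each $\hat{J}_{t,j}(\bm{Y})$ is asymptotically trace-free: $\tfrac1N\mathrm{tr}\,\hat{J}_{t,j}(\bm{Y})\to\mathbb{E}_\mu[\hat{J}_{t,j}(\mathsf{\Lambda})]=0$, since the empirical spectral distributions of $\bm{Y}$ and $\bm{W}$ differ in total variation by at most $1/N$, $\|\bm{Y}\|_{\op}\le C+\theta\|\bm{x}_\star\|^2/N$ stays bounded, and $K_i=\tilde{Q}_i\circ f$ is continuous on the relevant compact interval. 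Hence the reformulated iteration is an instance of the long-memory spiked OAMP of Definition~\ref{Def:OAMP_spiked}: index the OAMP iterates by pairs $(t,j)$, set $\bm{x}_{(t,j)}:=\big(\hat{J}_{t,j}(\bm{Y})-\tfrac1N\mathrm{tr}\,\hat{J}_{t,j}(\bm{Y})\cdot\bm{I}_N\big)\bar{\bm{u}}_j$, and let the iterate denoiser outputting $\bar{\bm{u}}_j$ act, divergence-free, on the linear combinations $\bm{r}_s=\sum_l\bm{x}_{(s,l)}$ for $s<j$, exactly as in the proof of Theorem~\ref{Th:AMP_SE}. Applying Lemma~\ref{Lem:OAMP_spiked} and using linearity of the $W_2$-limit, $\mathsf{R}_t$ decomposes into a signal component $\paraB{\beta}_t\mathsf{X}_\star$ and an independent Gaussian component; the scalar signal overlap $\mathbb{E}_\nu[\bar{f}_i(\mathsf{\Lambda}_\nu)]\alpha_i$ of Lemma~\ref{Lem:OAMP_spiked} becomes the matrix overlap $\paraB{\beta}_t=\mathbb{E}_\nu[\paraB{J}_t(\mathsf{\Lambda}_\nu)]\paraB{\alpha}_t$ with $\paraB{J}_t(\lambda)=\sum_i K_i(\lambda)\paraB{\Phi}_t^{i-1}$ (the empirical divergence matrix $\hat{\paraB{\Phi}}_t$ concentrating on the population $\paraB{\Phi}_t$ by Lipschitzness of $\eta$ and $W_2$-convergence), and the split $\paraB{\Omega}_{t,1}+\paraB{\Omega}_{t,2}$ becomes $\paraB{\Sigma}_{t,1}+\paraB{\Sigma}_{t,2}$ with the stated $\nu$- and $\mu$-weighted matrix expectations, where $\paraB{\alpha}_t$ and $\bar{\paraB{\Delta}}_t$ are the limits of $\tfrac1N\bm{x}_\star^\UT\bar{\bm{u}}_j$ and $\tfrac1N\bar{\bm{u}}_i^\UT\bar{\bm{u}}_j$ supplied by Lemma~\ref{Lem:OAMP_spiked}.

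The step I expect to be the main obstacle is Part~(2): making rigorous the passage from the multi-term, $\bm{Y}$-correlated representation $\bm{r}_t=\sum_j\hat{J}_{t,j}(\bm{Y})\bar{\bm{u}}_j$ to a bona fide spiked OAMP (the re-indexing, and checking that the embedded iterate denoisers are divergence-free Lipschitz functions of strictly earlier iterates), and then correctly propagating the signal/noise split of Lemma~\ref{Lem:OAMP_spiked} through the matrix polynomials to arrive at the two-measure covariance formula. Both of these have exact counterparts --- in the derivation of Theorem~\ref{Th:AMP_SE} and in the proof of Lemma~\ref{Lem:OAMP_spiked} --- so the argument is mostly bookkeeping, but tracking which contributions are weighted by $\nu$ versus $\mu$, and ensuring that $\paraB{\alpha}_t$ and $\bar{\paraB{\Delta}}_t$ are evaluated against the correct limiting objects, is where care is most needed.
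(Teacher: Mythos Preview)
Your proposal is correct and follows essentially the same approach as the paper: for Part~(1) you exploit the purely algebraic reduction (Lemma~\ref{The:RI-AMP-MP} specialized to $f_t\equiv f$, which collapses to the Lemma~\ref{Lem:FOM_D} argument with $\bm{W}$ replaced by $f(\bm{Y})$ and $\mu$ by $f_*\mu$), and for Part~(2) you reduce to a spiked OAMP and invoke Lemma~\ref{Lem:OAMP_spiked}, exactly as the paper does. Your write-up is in fact more detailed than the paper's own proof, which simply says ``analogously to Lemma~\ref{Lem:FOM_D}'' and ``similar to Theorem~\ref{Th:AMP_SE}, appealing to Lemma~\ref{Lem:OAMP_spiked}''.
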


\begin{proof}
Claim (1) can be proved analogously to Lemma \ref{Lem:FOM_D}, with $\bm{W}$ replaced by $\bm{Y}$. Note that the reduction of RI-AMP to OAMP as presented in Lemma \ref{Lem:FOM_D} is a deterministic result, and still holds with $\bm{W}$ replaced by $\bm{Y}$. The proof of Claim (2) is similar to that of Theorem \ref{Th:AMP_SE}, the only difference being that we now appeal to Lemma \ref{Lem:OAMP_spiked} (state evolution of OAMP for spiked models). 
\end{proof}

\section{Numerical Results}\label{Sec:numerical}

We conduct a few numerical experiments to demonstrate the performance of the RI-AMP-MP algorithm (Definition \ref{Def:RI_AMP_MP}) and show the accuracy of its theoretical state evolution prediction. Note that RI-AMP-MP generalizes the BAMP algorithm proposed in \cite{barbier2023fundamental} in the sense that the matrix denoising function $f(\cdot)$ can be non-polynomial. For polynomial $f(\cdot)$, the de-biasing coefficients in BAMP algorithm and the RI-AMP-MP algorithm are slightly different (the former is based on reduction to certain RI-AMP \cite{fan2022approximate} with proper time re-indexing), but equivalent asymptotically.

Fig.~\ref{Fig:MSE_vs_Iteration} shows the empirical and theoretical mean square error (MSE) performances of RI-AMP-MP. We set $f(\cdot)$ using the function proposed in \cite{barbier2023fundamental,barbier2024information}. The theoretical state evolution prediction is based on the formula given in Theorem \ref{The:RI-AMP-MP-SE}. In sub-figure (a), the eigenvalue distribution $\mu$ corresponds to the \textit{trace ensemble} \cite{barbier2023fundamental} with pure quartic potential. Both BAMP and RI-AMP-MP use the single-variate MMSE denoiser. For this setup, the function $f(\cdot)$ is a polynomial and RI-AMP-MP is asymptotically equivalent to BAMP. Indeed, as shown in Fig.~\ref{Fig:MSE_vs_Iteration}-(a), the performances of BAMP and RI-AMP-MP are very close and are close to state evolution prediction.

Fig.~\ref{Fig:MSE_vs_Iteration}-(b) considers a setup where $f(\cdot)$ is non-polynomial and BAMP is not directly applicable. (In principle, it is possible to approximate $f(\cdot)$ using polynomials within BAMP, but the Onager term and the state evolution become somewhat cumbersome as the degree increases.) For experiment purposes, we set $\mu$ to the Marcenko–Pastur distribution to
    \begin{align}
        \mu(\lambda) &= \frac{1}{2\pi } \frac{ \sqrt{( a_+ -\lambda)( \lambda - a_- ) } }{\alpha \lambda}, \quad a_+ := (1+\sqrt{\alpha})^2, \;  a_- := (1-\sqrt{\alpha})^2,
    \end{align}
    where $\alpha>0$ is a parameter (which is set to $\alpha=0.2$ in our experiement). Following \cite{barbier2024information}, we set the matrix denoising function $f(\cdot)$ as
    \BE
    f(\lambda) = \frac{\theta}{\alpha}\left(1 +\frac{\alpha-1}{\lambda}\right) - \frac{\theta^2}{\alpha\lambda}.
    \EE
In the above equation, $\theta>0$ denotes the SNR parameter for the spiked model \eqref{Eqn:spiked_model}. The signal denoisers $\eta_t(\cdot)$ uses the function in \cite[Remark 3.3]{fan2022approximate}, namely, optimal linear combining followed by the univariate MMSE function. Fig.~\ref{Fig:MSE_vs_Iteration}-(b) confirms that our theoretical state evolution prediction is still quite accurate for this setup.

\begin{figure}[htbp]
\begin{center}
\subfloat[Pure quartic distribution.\label{Fig:quartic}]{
\includegraphics[width=0.45\linewidth]{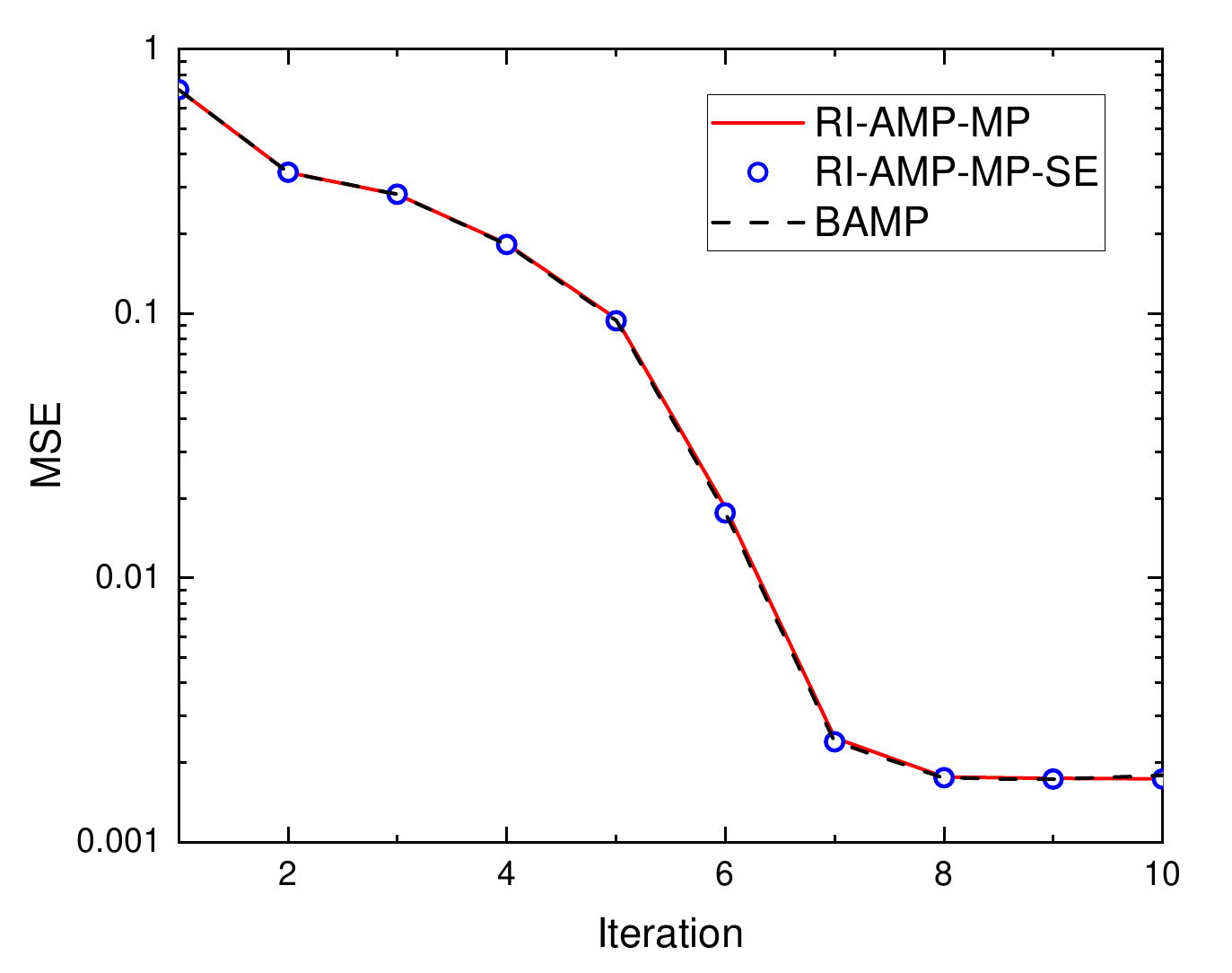}}
\subfloat[Marcenko–Pastur distribution. \label{Fig:MPlaw}]{
\includegraphics[width=0.45\linewidth]{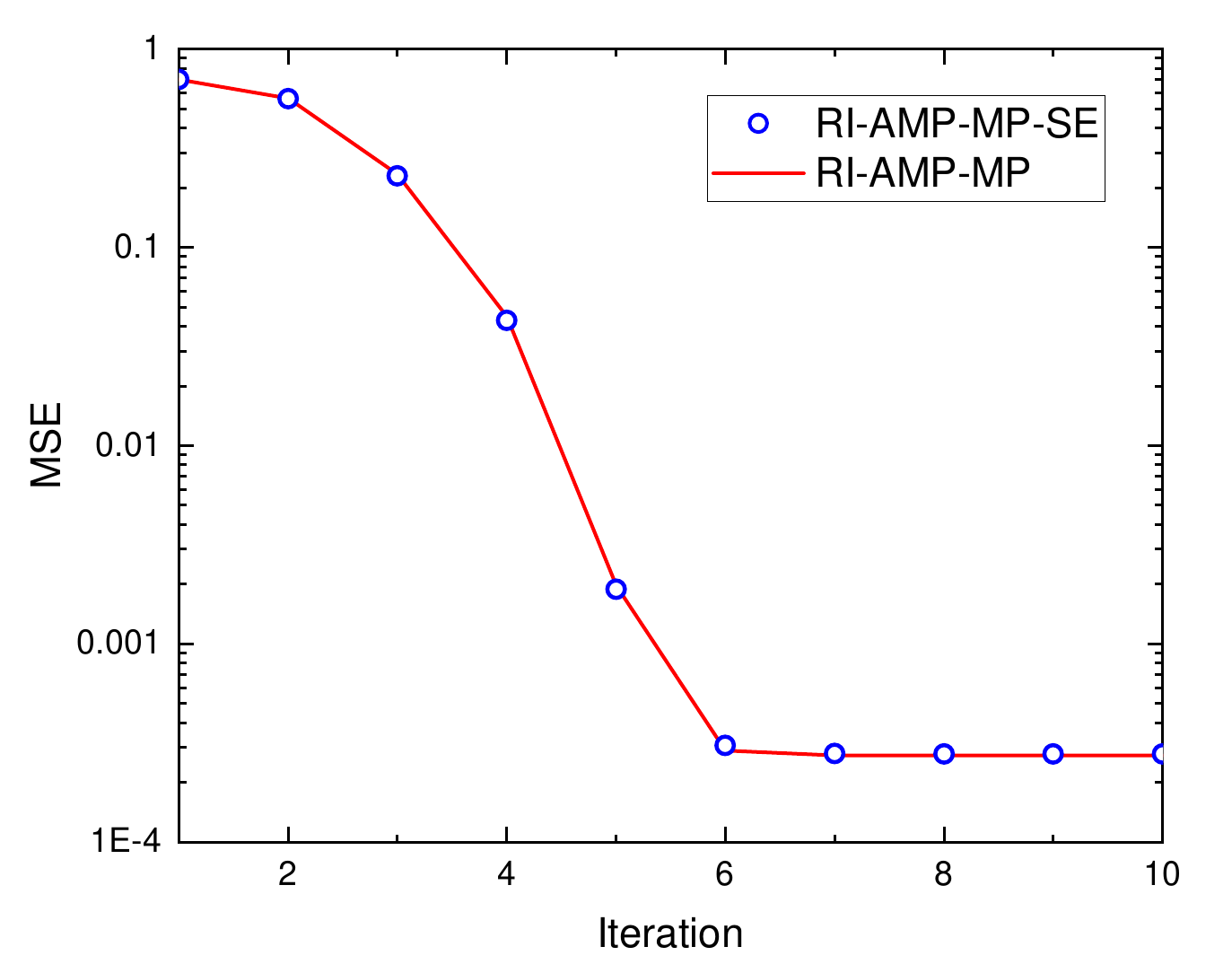} }
\end{center}
\caption{MSE performance of the RI-AMP-MP algorithm and the BAMP algorithm \citep{barbier2023fundamental}. In both experiments, we initialize the algorithms by $ \bm{u}_1 = \sqrt{\omega} \bm{x}_\star + \sqrt{(1-\omega)} \bm{n} $ where $\bm{n}$ is standard Gaussian independent of $\bm{x}_\star$ and $\omega = 0.3$. The empirical results are average over 50 independent runs. $N=5000$. The SNR parameter is $\theta=3$ for figure (a) and $\theta=1.5$ for figure (b).}
\label{Fig:MSE_vs_Iteration}
\end{figure}

\section*{Acknowledgement}
We are grateful to Rishabh Dudeja for many insightful conversations and comments that substantially impacted this work. We also thank Zhou Fan and Roland Speicher for helpful feedback about free cumulants. The second author would like to acknowledge Arian Maleki for stimulating discussions that motivate this work and Jizu Huang for discussions about Lemma \ref{Lem:RI_AMP_zero_trace}.

\bibliographystyle{plainnat}
\bibliography{refs,refs_PCA,Phase_retrieval3}

\appendices

\section{Proofs for Preliminaries}\label{App:preliminary_results}

\subsection{Derivations of the Polynomial Coefficients (Proof of Corollary \ref{Cor:poly_coefficients})}\label{App:poly_coefficients_proof}

Recall that $(Q_n)_{n\ge1}$ satisfies the recursion (see \eqref{Eqn:Q_def}):
\BS
\begin{align}
Q_n&= \mathsf{\Lambda}Q_{n-1}-\sum_{i=1}^n \mathbb{E}[\mathsf{\Lambda}Q_{i-1}]\cdot Q_{n-i},\quad\forall n\ge1\\
&\explain{\eqref{Eqn:kappa_Q}}{=} \mathsf{\Lambda}Q_{n-1}-\sum_{i=1}^n \kappa_i \cdot Q_{n-i}.
\end{align}
\ES
The polynomial representation of $Q_n$ is denoted as (see \eqref{Eqn:Q_poly_def})
\BE
Q_n =\sum_{i=0}^n \alpha_{n,i} \mathsf{\Lambda}^i,\quad\forall n\ge1.
\EE
Combining these two equations yields
\BS
\begin{align}
%\sum_{i=0}^n \alpha_{n,i} \mathsf{\Lambda}^i &=\mathsf{\Lambda}\cdot \sum_{i=0}^{n-1} \alpha_{n-1,i} \mathsf{\Lambda}^i - \sum_{j=1}^n \kappa_j\cdot \sum_{i=0}^{n-j}\alpha_{n-j,i} \mathsf{\Lambda}^i\\
\sum_{i=0}^n \alpha_{n,i} \mathsf{\Lambda}^i &= \sum_{i=0}^{n-1} \alpha_{n-1,i} \mathsf{\Lambda}^{i+1}- \sum_{j=1}^n \sum_{i=0}^{n-j}\kappa_j\alpha_{n-j,i}\mathsf{\Lambda}^i\\
&=  \sum_{i=1}^{n} \alpha_{n-1,i-1} \mathsf{\Lambda}^{i}-\sum_{j=1}^n \sum_{i=0}^{n-j}\kappa_j\alpha_{n-j,i}\mathsf{\Lambda}^i\\
&\explain{(a)}{=}\ \left(\alpha_{n-1,n-1}\mathsf{\Lambda}^{n}+  \sum_{i=1}^{n-1} \alpha_{n-1,i-1} \mathsf{\Lambda}^{i}\right)-\sum_{i=0}^{n-1}\sum_{j=1}^{n-i} \kappa_j\alpha_{n-j,i}\mathsf{\Lambda}^i\\
&=\alpha_{n-1,n-1}\mathsf{\Lambda}^{n}+  \sum_{i=1}^{n-1} \alpha_{n-1,i-1} \mathsf{\Lambda}^{i}-\sum_{j=1}^n \kappa_j\alpha_{n-j,0}- \sum_{i=1}^{n-1}\left(\sum_{j=1}^{n-i}\kappa_j\alpha_{n-j,i}\right)\mathsf{\Lambda}^i,\quad\forall n\ge2\\
&=\alpha_{n-1,n-1}\mathsf{\Lambda}^{n}+  \sum_{i=1}^{n-1} \left(\alpha_{n-1,i-1}-\sum_{j=1}^{n-i}\kappa_j\alpha_{n-j,i} \right)\mathsf{\Lambda}^{i}-\sum_{j=1}^n \kappa_j\alpha_{n-j,0},\quad\forall n\ge2,
\end{align}
\ES
where step (a) is due to a swap of the summation order.
Comparing the coefficients of the two sides shows the following ($\forall n\ge2,\forall 1\le i\le n-1$):
\BS
\begin{align}
\alpha_{n,n} &= \alpha_{n-1,n-1},\\
\alpha_{n,0} &= -\sum_{j=1}^n \kappa_j\alpha_{n-j,0},\\
\alpha_{n,i} &= \alpha_{n-1,i-1}-\sum_{j=1}^{n-i}\kappa_j\alpha_{n-j,i}.
\end{align}
\ES
For $n=1$, it is easy to verify that $\alpha_{1,0}=-\kappa_1$ and $\alpha_{1,1}=1$. Overall, the above recursion can be written into a unified formula \eqref{Eqn:alpha_recursion} together with the initialization $\alpha_{i,i}=1,\alpha_{i,-1}=0$, $\forall i\ge0$.

\subsection{A Reformulation of the Moment-Cumulant Formula}\label{App:moment_cumulant_new}

We will introduce a recursive characterization of free cumulants in Proposition \ref{Lem:cumulants} which is is useful for our derivation of RI-AMP. Before that, we present a useful reformulation of the right-hand side of the moment-cumulant formula \eqref{Eqn:free_cumulant_def}.

\begin{lemma}\label{Lem:moment_cumulant_new}
Let $q_0:=1$. The following holds for any sequence $(q_\ell)_{\ell\ge1}$:
\BE\label{Eqn:cumulant_moment_new}
\sum_{\pi\in\mathrm{NC}(\ell)}\prod_{B\in\pi}q_{|B|}=\sum_{(s_1,\ldots,s_\ell)\in\mathcal{S}(\ell)} \ \prod_{i=1}^\ell q_{s_{i}},\quad \forall \ell\ge1,
\EE
where $\mathcal{S}(\ell)$ denotes the collection of $\ell$-tuple $(s_1,\ldots,s_\ell)$ satisfying
\begin{itemize}
\item[(1)] $s_m\ge0,\ \forall m\in[\ell]$.
\item[(2)] $\sum_{j=1}^m s_j\le m,\  \forall m\in[\ell]$.
\item[(3)] $\sum_{j=1}^\ell s_j = \ell$.
\end{itemize}
\end{lemma}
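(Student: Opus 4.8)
## Proof Proposal for Lemma \ref{Lem:moment_cumulant_new}

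The plan is to establish a bijection between $\mathrm{NC}(\ell)$ and the set $\mathcal{S}(\ell)$ of admissible $\ell$-tuples, in such a way that a non-crossing partition $\pi$ with block sizes $(|B|)_{B\in\pi}$ maps to a tuple $(s_1,\dots,s_\ell)$ whose nonzero entries are exactly those block sizes (each listed once). If such a bijection respects the block-size multiset, then the two products $\prod_{B\in\pi}q_{|B|}$ and $\prod_{i=1}^{\ell}q_{s_i}$ agree (using $q_0=1$ to absorb the zero entries), and summing over both sides gives \eqref{Eqn:cumulant_moment_new}.

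The natural candidate for this bijection is the standard encoding of a non-crossing partition by ``opening positions.'' Given $\pi\in\mathrm{NC}(\ell)$, for each $m\in[\ell]$ let $s_m$ be the size of the block $B\in\pi$ whose \emph{largest} element is $m$, and set $s_m=0$ if $m$ is not the maximum of its block. First I would check the three defining properties of $\mathcal{S}(\ell)$: property (1) is immediate; property (3) holds because each block contributes its size exactly once (at its maximal element), and the block sizes sum to $\ell$; property (2) is the key constraint and is precisely where non-crossingness enters. For (2), one observes that $\sum_{j=1}^{m}s_j$ counts the total size of all blocks whose maximal element is $\le m$; because $\pi$ is non-crossing, every such block is entirely contained in $[m]$ (a block cannot ``reach past'' $m$ and come back, as that would cross a block maximal at some later index), so the sum of their sizes is at most $|[m]|=m$. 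I expect this containment argument — that blocks closing by time $m$ live inside $[m]$ — to be the main obstacle, in the sense that it requires the one genuinely combinatorial observation about non-crossing partitions; everything else is bookkeeping.

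For surjectivity and injectivity, I would describe the inverse map explicitly: given $(s_1,\dots,s_\ell)\in\mathcal{S}(\ell)$, read the indices $1,2,\dots,\ell$ left to right, maintaining a stack of ``open'' singleton positions; at step $m$, if $s_m=0$ push $m$ onto the stack, and if $s_m=k\ge1$, pop the top $k-1$ elements off the stack and form, together with $m$, a block of size $k$ with maximal element $m$. Property (2) guarantees the stack always has enough elements to pop (the number of currently open positions at step $m$ is $m-\sum_{j\le m}s_j\ge 0$, and one checks the relevant inequality at the moment of popping), and property (3) guarantees the stack is empty at the end, so every index is assigned to exactly one block. That the resulting partition is non-crossing follows from the stack (LIFO) discipline: nested or disjoint intervals only, never interleaved. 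One then verifies that these two maps are mutually inverse, which completes the bijection and hence the proof. As an alternative to the explicit bijection, a short induction on $\ell$ is also available — peeling off the block containing $1$ (equivalently, conditioning on the position of the first ``closure'') splits both sides into a product over an interval and a remainder — but the bijective argument is cleaner and more transparent, so that is the route I would take.
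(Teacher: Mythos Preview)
Your approach is essentially identical to the paper's: both construct the same forward map $\pi\mapsto(s_1,\dots,s_\ell)$ that records block sizes at block maxima, verify it lands in $\mathcal{S}(\ell)$, and then exhibit an explicit inverse (your left-to-right stack construction is in fact cleaner than the paper's right-to-left scan). One minor correction: property~(2) does not actually require non-crossingness, since $\max(B)\le m$ trivially implies $B\subseteq[m]$ for \emph{any} partition; the non-crossing hypothesis enters only in making the forward map injective, i.e.\ in ensuring that your stack-based inverse really recovers the original~$\pi$.
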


\begin{proof}
First, following similar reasoning as \citep[pp.~364]{anderson2010introduction}, we can prove that there is a bijection between $\text{NC}(\ell)$ and $\mathcal{S}(\ell)$. For completeness, we include the proof here. We shall provide the explicit forms of the maps for both directions.

\paragraph{Map from $\text{NC}(\ell)$ to $\mathcal{S}(\ell)$:} Consider $\pi:=\{B_1,\ldots,B_N\}\in\text{NC}(\ell)$, where $B_i$ denotes the $i$-th block of $\pi$. Define a map $\phi_\ell:\pi\mapsto(s_1,\ldots,s_\ell)$:
\BE\label{Eqn:map_def}
s_i:=
\begin{cases}
%1, & \text{ if $\{i\}$ is a singleton of $\pi$},\\
|B_{j}|, & \text{ if $i$ is the largest element of some block $B_j$, where $j\in[\ell]$},\\
0, & \text{ otherwise}.
\end{cases}
\EE
We claim that $\phi_\ell(\pi):=(s_1,\ldots,s_\ell)\in\mathcal{S}(\ell)$ for any non-crossing partition $\pi\in\text{NC}(\ell)$. To see this, we verify that such $(s_1,\ldots,s_\ell)$ satisfies the following three conditions: (1) $s_m\ge0,\ \forall m\in[\ell]$; (2) $\sum_{j=1}^m s_j\le m,\  \forall m\in[\ell]$; (3) $\sum_{j=1}^\ell s_j = \ell$. Conditions (1) and (3) directly follow from the definition \eqref{Eqn:map_def}. Condition (2) is a consequence of the definition \eqref{Eqn:map_def} as well as the fact that $\pi$ is non-crossing. 

\paragraph{Map from $\mathcal{S}(\ell)$ to $\text{NC}(\ell)$:} Let $(s_1,\ldots,s_\ell)\in\mathcal{S}(\ell)$. We start from $s_\ell$ and count backwards towards $s_1$ until we meet some nonzero $s_k$. We claim that one of the following scenarios must happen: $s_k=1$ or $(s_{k-j},\ldots,s_k)=(\underbrace{0,\ldots,0}_{j-1 \text{ zeros}},j)$ for some $1\le j\le k-1$. Otherwise, the defining properties of $\mathcal{S}(\ell)$ would be violated. In the latter case, we form a block consisting of $\{s_{k-j},\ldots,s_k\}$. Next, we remove the elements in this block and repeat the whole procedure (starting from the largest elements) until no elements in $[\ell]$ are left. The blocks produced by this procedure constitute a partition of $[\ell]$. Finally, this construction guarantees that the partition is non-crossing.

\paragraph{Proof of Lemma \ref{Lem:moment_cumulant_new}.} Let $\pi\in\text{NC}(\ell)$ be an arbitrary non-crossing partition, and denote $\phi_\ell(\pi)=(s_1(\pi),\ldots,s_n(\pi))\in\mathcal{S}(\ell)$. By the definition of the map $\phi_\ell$ and $q_0:=1$, we have  
\[
q_{\pi}:=\prod_{B\in\pi}q_{|B|}=\prod_{j=1}^\ell q_{s_j(\pi)}.
\]
Because of the bijection between $\text{NC}(\ell)$ and $\mathcal{S}(\ell)$, we have 
\BE
\sum_{\pi\in\text{NC}(\ell)}\prod_{B\in\pi}q_{|B|}=\sum_{(s_1,\ldots,s_\ell)\in\mathcal{S}(\ell)}\ \prod_{j=1}^\ell q_{s_j},
\EE
which completes the proof. 
\end{proof}

\begin{figure}[htbp]
\centering
\includegraphics[width=0.7\textwidth]{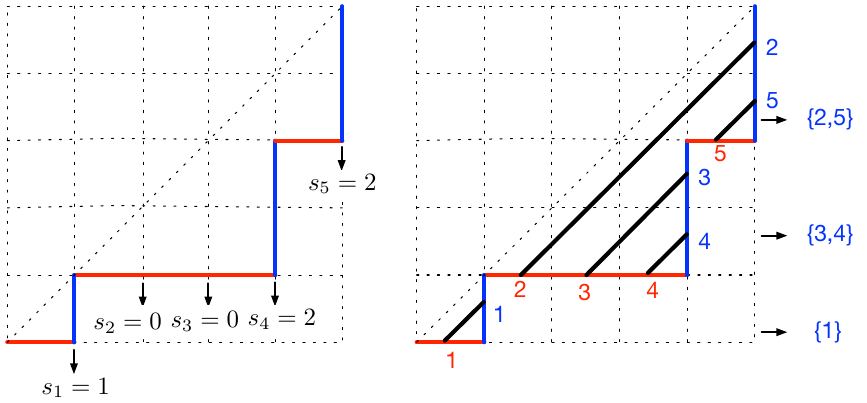}
\caption{\textbf{Left:} Map from $(s_1,\ldots,s_5)=(1,0,0,2,2)\in\mathcal{S}(5)$ to a Dyck path of length 10 marked in solid lines. \textbf{Right:} Map from the Dyck path to a non-crossing partition of $[5]$: $(\{1\}, \{2,5\},\{3,4\})$. (The non-crossing partition is depicted in Fig.~\ref{Fig:Dyck}.) The 5 horizontal/vertical steps of the Dyck path are marked in red/blue colors.}\label{Fig:Dyck}
\end{figure}

\begin{remark}[Connections of $\mathcal{S}(n)$, Dyck paths and non-crossing partitions]
The object $\mathcal{S}(\ell)$ is also closely related to (and bijective to) \textit{Dyck path} \citep{nica2006lectures}. A {Dyck path} of length $2\ell$ is a lattice path in $\mathbb{Z}^2$ from $(0,0)$ to $(k,k)$, consisting of $k$ east steps and $k$ north steps, which lives below the diagonal line $y=x$ (but may touch it).\footnote{Notice that the definition of Dyck path here is a flipped version of the usual definition, but they are equivalent.} Let $\text{Dyck}(\ell)$ be the collection of Dyck paths of length $2\ell$. {An example of a Dyck path of length 10 is shown in Fig.~\ref{Fig:Dyck}.} 

The three objects $\mathcal{S}(n)$, Dyck paths and non-crossing partitions are bijective to each other. For the connection between $\mathcal{S}(n)$ and Dyck paths, we note that each $s_i$ ($\forall i\in[\ell]$) can be interpreted as the vertical increment of a Dyck path at the vertical axis $x=i$; see illustration in the first sub-figure of Fig.~\ref{Fig:Dyck}. Therefore, if $s_i>0$, the Dyck path has a vertical step (marked in blue color) of length $s_i$ at the axis $x=i$; while if $s_i=0$, the Dyck path has no vertical step at the axis $x=i$. The connection between Dyck paths and non-crossing partitions are well-known \cite{nica2006lectures}, and is illustrated in the second sub-figure of Fig.~\ref{Fig:Dyck}. To map a Dyck path to a non-crossing partition, we label the $\ell$ horizontal steps of the Dyck paths in increasing order from left to right. The $\ell$ vertical steps of the Dyck paths are labeled in a way as shown in Fig.~\ref{Fig:Dyck}. We group the labels in the same vertical axis into one block. These blocks form a non-crossing partition of $[\ell]$. From this map, it is clear that the cardinalities of the blocks of the non-crossing partition $(|B_1|,\ldots,|B_k|)$, which appear in the cumulant-moment formula \eqref{Eqn:free_cumulant_def}, correspond to the lengths of the vertical segments of a Dyck path.
\end{remark}

%===========================
\subsection{A Recursive Characterization of Free Cumulants (Proof of Proposition \ref{Lem:cumulants})}\label{App:cumulants}

%\begin{proof}
Let $(Q_n)_{n\ge1}$ be defined as in \eqref{Eqn:Q_def}. Denote
\BE\label{Eqn:qk_def}
q_n:=\mathbb{E}[\mathsf{\Lambda}Q_{n-1}], \quad \forall  n\ge1.
\EE
We first prove that the following holds for all $\ell\ge1$, $n\ge0$:
%\BE\label{Eqn:H_mn}
%\mathcal{H}_{\ell,n}:\quad  \mathbb{E}[X^{\ell}Q_{n}]={\sum_{i_1=0}^{n+1}\sum_{i_2=0}^{n+2-i_1}\cdots\sum_{i_{\ell-1}=0}^{n+(\ell-1)-(i_1+i_2+\cdots +i_{\ell-1})} }q_{i_1}\cdot q_{i_2}\cdots q_{i_{\ell-1}}\cdot q_{\ell+n-(i_1+i_2+\cdots +i_{\ell-1})},
%\EE
\BE\label{Eqn:H_mn}
\mathcal{H}_{\ell,n}:\quad  \mathbb{E}[\mathsf{\Lambda}^{\ell}Q_{n}]=\sum_{(s_1,\ldots,s_\ell)\in\mathcal{S}(\ell,n)}\ \prod_{j=1}^\ell q_{s_j},
\EE
where $q_0:=1$ and $\mathcal{S}(\ell,n)$ denotes the collection of $\ell$-tuple $(s_1,\ldots,s_\ell)$ satisfying
\begin{itemize}
\item[(1)] $s_m\ge0$, $\forall m\in[\ell]$.
\item[(2)] $\sum_{j=1}^m s_j\le m+n$,  $\forall m\in[\ell]$.
\item[(3)] $\sum_{j=1}^\ell s_j=\ell+n$.
\end{itemize}
Next, we shall prove \eqref{Eqn:H_mn} by induction on $\ell$:
\begin{enumerate}
\item Base case: $\{\mathcal{H}_{1,n},\forall n\ge0\}$.
\item Induction step: for all $\ell\ge1$, $\{\mathcal{H}_{\ell,n}, \forall n\ge0\} \  \Longrightarrow \  \{\mathcal{H}_{\ell+1,n}, \forall n\ge0\}$.
\end{enumerate}
\paragraph{Base case:} When $\ell=1$, \eqref{Eqn:H_mn} reduces to
\BE
\mathcal{H}_{1,n}:\quad  \mathbb{E}[\mathsf{\Lambda} Q_{n}] = q_{1+n},
\EE
which follows from the definition in \eqref{Eqn:qk_def}.

\paragraph{Induction step:} We shall prove the following holds for all $\ell\ge1$ and  $n\ge1$:
\BE\label{Eqn:induction}
\mathcal{H}_{\ell,n}\quad \Longrightarrow \quad \mathcal{H}_{\ell+1,n-1}.
\EE
This would imply $\{\mathcal{H}_{\ell,n},\forall n\ge1\} \Longrightarrow\{  \mathcal{H}_{\ell+1,n'},\forall n'\ge0\}$ and hence the induction step we aim to prove $\{\mathcal{H}_{\ell,n},\forall n\ge0\} \Longrightarrow\{  \mathcal{H}_{\ell+1,n'},\forall n'\ge0\}$. To prove \eqref{Eqn:induction}, we unfold $Q_{n+1}$:
\begin{align*}
\mathbb{E}[\mathsf{\Lambda}^\ell Q_{n+1}] &=\mathbb{E}\left[\mathsf{\Lambda}^\ell \left(\mathsf{\Lambda}Q_{n}-\sum_{i=1}^{n+1} \mathbb{E}[\mathsf{\Lambda} Q_{i-1}]\cdot Q_{n+1-i}\right)\right]\\
&=\mathbb{E}[\mathsf{\Lambda}^{\ell+1}Q_{n}]-\sum_{i=1}^{n+1} q_i\cdot \mathbb{E}\left[\mathsf{\Lambda}^\ell Q_{n+1-i}\right].
\end{align*}
Re-arranging terms and noting $q_0=1$:
\BS\label{Eqn:Q_temporary}
\begin{align}
\mathbb{E}[\mathsf{\Lambda}^{\ell+1}Q_{n}] &=\sum_{i=0}^{n+1}q_i\cdot \mathbb{E}\left[\mathsf{\Lambda}^\ell Q_{n+1-i}\right]\\
&\explain{(a)}{=}\sum_{i=0}^{n+1}q_i\cdot\left(\sum_{(s_1,\ldots,s_\ell)\in\mathcal{S}(\ell,n+1-i)}\ q_{s_1}\cdots q_{s_\ell}\right)\\
&\explain{(b)}{=}\sum_{(i,s_1,\ldots,s_\ell)\in\mathcal{S}(\ell+1,n)}\ q_i\cdot q_{s_1}\cdots p_{s_\ell}
\end{align}
\ES
where step (a) is a consequence of the induction hypothesis $\{\mathcal{H}_{\ell,n}, \forall n\ge0\}$ and step (b) is from the following recursion (which can be verified from the definition of $\mathcal{S}(\ell+1,n)$)
\[
\mathcal{S}(\ell+1,n)=\big\{(i,s_1,\ldots,s_\ell): 0\le i\le n+1,\, (s_1,\ldots,s_\ell)\in \mathcal{S}(\ell,n+1-i)\big\}.
\]
This proves \eqref{Eqn:induction} and completes the proof of \eqref{Eqn:H_mn}.

Finally, setting $n=0$ in \eqref{Eqn:H_mn} yields
\BS\label{Eqn:moment_cumulant_recall}
\begin{align}
m_\ell\overset{(a)}{=}\mathbb{E}[\mathsf{\Lambda}^\ell Q_0] &=\sum_{(s_1,\ldots,s_\ell)\in\mathcal{S}(\ell,0)}\ \prod_{j=1}^\ell q_{s_j}\\
&\explain{(b)}{=}\sum_{(s_1,\ldots,s_\ell)\in\mathcal{S}(\ell)}\ \prod_{j=1}^\ell q_{s_j}\\
&\explain{(c)}{=} \sum_{\pi\in\text{NC}(\ell)}\prod_{B\in\pi}q_{|B|},
\end{align}
\ES
where step (a) follows from $Q_0:=1$; step (b) is from the fact that $\mathcal{S}(\ell,0)$ in \eqref{Eqn:moment_cumulant_recall} is equal to $\mathcal{S}(\ell)$ defined in Lemma \ref{Eqn:cumulant_moment_new}; and step (c) is due to Lemma \ref{Lem:moment_cumulant_new}. Note that \eqref{Eqn:moment_cumulant_recall} is precisely the moment-cumulant formula that defines free cumulants. We can invert the above formula \citep[Lecture 10]{nica2006lectures} and conclude that the sequence $(q_{\ell})_{\ell\ge1}$ is equal to the sequence of free cumulants $(\kappa_{\ell})_{\ell\ge1}$.
%\end{proof}

%===================================
\subsection{Connection with the Partial Moments in \cite{fan2022approximate}}\label{App:partial_moments}

\paragraph{Proof of \eqref{Eqn:partial_moment_identity}:} We consider the following cases:
\begin{itemize}
\item $t=0,j=0$: we have $\E [ Q_{0}] = 1 = c_{0,0}$. 
\item $t=0, j\geq 1$: we have $\E [ Q_{j}] = 0 = c_{0,j}$.
\item $t=1$: From the definition \eqref{Eqn:partial_moments_def}, we have
\[
c_{1,j} = \sum_{m=0}^{j+1} c_{0,m} \; \kappa_{j+1-m} \explain{(a)}{=} \kappa_{j+1},
\]
where step (a) is due to the definition of $c_{0,m}$ for the above two corner cases. On the other hand, Proposition \ref{Lem:cumulants} shows that $\E [ \mathsf{\Lambda} Q_{j} ]  = \kappa_{j+1},\forall j\ge0$. Hence,
\[ 
\E [ \mathsf{\Lambda} Q_{j} ] = c_{1,j} ,\quad\forall j\ge0.
\]
\item $t > 1$: we shall prove via induction. Suppose that $c_{k,j} = \E [\mathsf{\Lambda}^k Q_{j}] $ for all $0\le k\le t-1$ and $j\ge0$. Then, the following holds $\forall j\ge0$:
\BS
\begin{align}
\mathbb{E}[\mathsf{\Lambda}^{t}Q_{j}] &\explain{(a)}{=}\sum_{i=0}^{j+1} \kappa_i\cdot \mathbb{E}\left[\mathsf{\Lambda}^{t-1} Q_{j+1-i}\right] \\
&\explain{(b)}{=} \sum_{i=0}^{j+1} \kappa_i\cdot c_{t-1,j+1-i} \\
&\explain{(c)}{=}  \sum_{m=0}^{j+1} \kappa_{j+1-m}\cdot c_{t-1,m} \\
&\explain{(d)}{=} c_{t,j},
\end{align}
\ES
where 
\begin{itemize}
\item[{(a)}] This step is due to \eqref{Eqn:Q_temporary}. Note that from the definition of $q_i$ in \eqref{Eqn:qk_def}, $q_i:=\mathbb{E}[\mathsf{\Lambda}Q_{i-1}]$. On the other hand, Proposition \ref{Lem:cumulants} shows $\mathbb{E}[\mathsf{\Lambda}Q_{i-1}]=\kappa_i$. Substituting $q_i=\kappa_i$ into \eqref{Eqn:Q_temporary} proves this step.
\item[{(b)}] This step follows from the induction hypothesis.
\item[{(c)}] This is a change of variable.
\item[{(d)}] This is from the definition of $c_{t,j}$; see \eqref{Eqn:partial_moments_def}.
\end{itemize}
\end{itemize} 
The proof is now complete.
\section{State Evolution of OAMP (Proof of Theorem \ref{The:OAMP_SE})}\label{App:proof_OAMP}

Consider the following minor variant of the OAMP algorithm in \eqref{Eqn:LM_OAMP_recall}:
\BS\label{Eqn:LM_OAMP_recall_app}
\begin{align}
\bm{w}_t &= \left(f_t(\bm{W})-\mathbb{E}\left[f_t(\mathsf{\Lambda})\right]\cdot\bm{I}_N\right)\bar{\bm{u}}_{t},\\
\bar{\bm{u}}_{t+1} &= g_{t+1} (\bm{w}_{\le t};\bm{a})-\sum_{i=1}^t \mathbb{E}\left[\partial_i  g_{t+1} (\mathsf{X}_{\le t};\mathsf{A})\right]\cdot \bm{w}_i,\label{Eqn:LM_OAMP_recall_app_b}
\end{align}
\ES
where the initialization $\bar{\bm{u}}_1=\bar{\bm{x}}_1$, and the expectations in \eqref{Eqn:LM_OAMP_recall_app_b} are taken w.r.t. the random variables $(\mathsf{X}_1,\ldots,\mathsf{X}_t;\mathsf{A})$ defined recursively in \eqref{Eqn:SE_OAMP_first}-\eqref{Eqn:SE_OAMP_b}. Note that \eqref{Eqn:LM_OAMP_recall_app} is an instance of the vector AMP algorithm as defined in \cite[Section 4.1.2]{dudeja2022spectral}. In the above equation, we have introduced the shorthand
\[
\bm{w}_{\le t}:=(\bm{w}_1,\ldots,\bm{w}_t).
\]
(We will keep this notation throughout this section.) By \cite[Theorem 2]{dudeja2022spectral}, the following convergence holds
\BE\label{Eqn:OAMP_population_variables}
(\bm{w}_1,\bm{w}_2,\ldots,\bm{w}_t;\bm{a})\overset{W_2}{\longrightarrow}(\mathsf{X}_1,\mathsf{X}_2,\ldots,\mathsf{X}_t;\mathsf{A}),
\EE
where the state evolution random variables $(\mathsf{X}_1,\ldots,\mathsf{X}_t;\mathsf{A})$ are defined in \eqref{Eqn:SE_OAMP_first}-\eqref{Eqn:SE_OAMP_b}. Compared with the original OAMP algorithm \eqref{Eqn:LM_OAMP_recall}, the iteration \eqref{Eqn:LM_OAMP_recall_app} replaces $\frac{1}{N}\text{tr}\left(f_t(\bm{W})\right)$ and $\left( \langle\partial_i  g_{t+1} (\bm{x}_1,\ldots,\bm{x}_t;\bm{a})\rangle\right)_{i\in[t]}$ by $\mathbb{E}\left[f_t(\mathsf{\Lambda})\right]$ and $\left(\mathbb{E}\left[\partial_i  g_{t+1} (\mathsf{X}_1,\ldots,\mathsf{X}_t;\mathsf{A})\right]\right)_{i\in[t]}$, respectively. Similar to the arguments used in the proof of \cite[Theorem 1]{dudeja2024optimality}, we can show that the difference between these two algorithms is asymptotically negligible.

Following \cite{dudeja2024optimality}, we say $\bm{u}\explain{$\dim\rightarrow \infty$}{\simeq}\bm{v}$ for two random vectors $\bm{u},\bm{v}\in\mathbb{R}^N$, if
\[
\frac{\|\bm{u}-\bm{v}\|^2}{N}\overset{\mathbb{P}}{\longrightarrow}0\quad \text{as}\quad N\to\infty.
\]
Given the convergence result in \eqref{Eqn:OAMP_population_variables}, it suffices to prove that the the iterates generated by the original OAMP algorithm and that generated by the auxiliary iteration \eqref{Eqn:LM_OAMP_recall_app} satisfies 
\BE\label{Eqn:OAMP_SE_app_error_goal}
\bm{x}_t \explain{$\dim\rightarrow \infty$}{\simeq} \bm{w}_t,\quad\forall t\ge1.
\EE
We show this by induction. Suppose that this is true up to iteration $t$. Recall the definitions in \eqref{Eqn:LM_OAMP} and \eqref{Eqn:LM_OAMP_recall_app}:
\BS
\begin{align}
\bm{x}_{t+1} &=\left(f_{t+1}(\bm{W})-\frac{\text{tr}\left(f_{t+1}(\bm{W})\right)}{N}\cdot\bm{I}_N\right)\bar{\bm{x}}_{t+1},\\
\bm{w}_{t+1} &=\left(f_{t+1}(\bm{W})-\mathbb{E}[f_{t+1}(\mathsf{\Lambda})]\cdot\bm{I}_N\right)\bar{\bm{u}}_{t+1}.
\end{align}
\ES
Hence,
\BS\label{Eqn:OAMP_SE_app_error}
\begin{align}
\frac{\|\bm{x}_{t+1} - \bm{w}_{t+1}\|^2}{N} &\le\frac{1}{N}\left\|\left(f_{t+1}(\bm{W})-\frac{\text{tr}\left(f_{t+1}(\bm{W})\right)}{N}\cdot\bm{I}_N\right)\left(\bar{\bm{x}}_{t+1}-\bar{\bm{u}}_{t+1}\right)\right\|^2\nonumber\\
&\quad + \left(\mathbb{E}[f_{t+1}(\mathsf{\Lambda})]-\frac{\text{tr}\left(f_{t+1}(\bm{W})\right)}{N}\right)^2\cdot\frac{1}{N}\left\|\bar{\bm{u}}_{t+1}\right\|^2\\
&\le \left\|f_{t+1}(\bm{W})-\frac{\text{tr}\left(f_{t+1}(\bm{W})\right)}{N}\cdot\bm{I}_N\right\|_{\mathrm{op}}\cdot\frac{1}{N}\left\|\bar{\bm{x}}_{t+1}-\bar{\bm{u}}_{t+1}\right\|^2\nonumber\\
&\quad + \left(\mathbb{E}[f_{t+1}(\mathsf{\Lambda})]-\frac{\text{tr}\left(f_{t+1}(\bm{W})\right)}{N}\right)^2\cdot\frac{1}{N}\left\|\bar{\bm{u}}_{t+1}\right\|^2.
\end{align}
\ES
To control the above term, we bound the operator norm of the matrix $f_{t+1}(\bm{W})-\frac{\text{tr}\left(f_{t+1}(\bm{W})\right)}{N}\cdot\bm{I}_N$:
\BS\label{Eqn:op_bound}
\begin{align}
\left\|f_{t+1}(\bm{W})-\frac{\text{tr}\left(f_{t+1}(\bm{W})\right)}{N}\cdot\bm{I}_N\right\|_{\mathrm{op}}&=\max_{1\le i\le N}\ \left|f_{t+1}(\lambda_i)-\frac{1}{N}\sum_{j=1}^N f_{t+1}(\lambda_j)\right|\\
&\le\max_{1\le i,i'\le N}\ \left|f_{t+1}(\lambda_i)-f_{t+1}(\lambda_{i'})\right|\\
&\le 2 \max_{1\le i\le N}\ \left|f_{t+1}(\lambda_i)\right|\le C',
\end{align}
\ES
for some $N$-independent constant $C'$. The last step is due to the fact that $\max_{1\le i\le N}\ \left|\lambda_i\right|$ is bounded by an $N$-independent constant and $f_{t+1}$ is continuous and independent of $N$; see Assumption \ref{Ass:LM_OAMP}. Further, the empirical eigenvalue distribution of $\bm{W}$ converges to a compactly support measure, hence
\BE\label{Eqn:trace_convergence}
\lim_{N\to\infty}\frac{\text{tr}\left(f_{t+1}(\bm{W})\right)}{N}=\mathbb{E}[f_{t+1}(\mathsf{\Lambda})].
\EE
In light of \eqref{Eqn:OAMP_SE_app_error}-\eqref{Eqn:trace_convergence}, to prove \eqref{Eqn:OAMP_SE_app_error_goal}, it remains to show
\BS
\begin{align}
&\underset{N\to\infty}{\mathrm{plim}}\ \frac{1}{N}\left\|\bar{\bm{x}}_{t+1}-\bar{\bm{u}}_{t+1}\right\|^2 =0,\label{Eqn:OAMP_SE_bar_diff}\\
& \underset{N\to\infty}{\mathrm{plim}\,\mathrm{sup}}\ \frac{1}{N}\left\|\bar{\bm{u}}_{t+1}\right\|^2 <\infty.\label{Eqn:OAMP_SE_bar_bound}
\end{align}
\ES
Note that $\bar{\bm{u}}_{t+1}$ is generated via the auxiliary iteration \eqref{Eqn:LM_OAMP_recall_app}, and \eqref{Eqn:OAMP_SE_bar_bound} follows from the convergence result \eqref{Eqn:OAMP_population_variables}. Finally, to prove \eqref{Eqn:OAMP_SE_bar_diff}, recall the definitions of $\bar{\bm{x}}_{t+1}$ and $\bar{\bm{u}}_{t+1}$ in \eqref{Eqn:LM_OAMP} and \eqref{Eqn:LM_OAMP_recall_app}:
\BS
\begin{align*}
 &\frac{1}{N}\left\|\bar{\bm{x}}_{t+1}-\bar{\bm{u}}_{t+1}\right\|^2\\
 &=\frac{1}{N}\left\|g_{t+1} (\bm{x}_{\le t};\bm{a})-\sum_{i=1}^{t} \langle\partial_i  g_{t} (\bm{x}_{\le t};\bm{a})\rangle\cdot \bm{x}_i-\left(g_{t+1} (\bm{w}_{\le t};\bm{a})-\sum_{i=1}^t \mathbb{E}\left[\partial_i  g_{t+1} (\mathsf{X}_{\le t};\mathsf{A})\right]\cdot \bm{w}_i\right)\right\|^2\\
 &\le \frac{1}{N}\left\|g_{t+1} (\bm{x}_{\le t};\bm{a})-g_{t+1} (\bm{w}_{\le t};\bm{a})\right\|^2+\sum_{i=1}^t \frac{1}{N}\big\| \langle\partial_i  g_{t} (\bm{x}_{\le t})\rangle\cdot \bm{x}_i - \mathbb{E}\left[\partial_i  g_{t+1} (\mathsf{X}_{\le t};\mathsf{A})\right]\cdot \bm{w}_i \big\|^2.
\end{align*}
\ES
The above term converges to zero in probability due to the following facts:
\begin{itemize}
\item Induction hypothesis: $\bm{x}_s \explain{$\dim\rightarrow \infty$}{\simeq} \bm{w}_s,\quad\forall s=1,\ldots,t$.
\item The assumption that $g_{t+1}$ is Lipschitz continuous.
\item The convergence $\langle\partial_i  g_{t} (\bm{x}_{\le t};\bm{a})\rangle\overset{\mathbb{P}}{\longrightarrow}\mathbb{E}\left[\partial_i  g_{t+1} (\mathsf{X}_{\le t};\mathsf{A})\right]$, $\forall i=1,\ldots,t$. This is a consequence of: (1) the induction hypothesis: $\bm{x}_s \explain{$\dim\rightarrow \infty$}{\simeq} \bm{w}_s,\quad\forall s=1,\ldots,t$; (2) the assumption that $g_{t+1}$ is Lipschitz and continuously differentiable (and hence $\partial_i  g_{t+1}$ is continuously bounded).
\end{itemize}
The proof is now complete.

\section{Proofs for Main Results}\label{App:main_results}

\subsection{Reformulation of First-order Method (Proof of Lemma \ref{Lem:RI_AMP_reformulation})}\label{App:reformulation}

%\begin{proof}

We stack the iterates of the RI-AMP algorithm as
\BS\label{Eqn:OAMP_derive_r_u}
\begin{align}
\begin{bmatrix}
\vr_1 \\
\vr_2 \\
\vdots \\
\vr_t
\end{bmatrix}
&=
\begin{bmatrix}
\bm{W}\vu_1 \\
\bm{W}\vu_2 \\
\vdots \\
\bm{W}\vu_t 
\end{bmatrix}
-
\begin{bmatrix}
\para{b}_{1,1}\bm{I}_N & & & \\
\para{b}_{2,1}\bm{I}_N & \para{b}_{2,2}\bm{I}_N & & \\
\vdots & \vdots & \ddots & \\
\para{b}_{t,1} \bm{I}_N& \para{b}_{t,2}\bm{I}_N & \cdots & \para{b}_{t,t}\bm{I}_N
\end{bmatrix} 
\begin{bmatrix}
\vu_1 \\
\vu_2 \\
\vdots \\
\vu_t 
\end{bmatrix}\\
%&=(\bm{I}_t\otimes\bm{W})
%\begin{bmatrix}
%\vu_1 \\
%\vu_2 \\
%\vdots \\
%\vu_t 
%\end{bmatrix}
%-(\mathsf{B}_t\otimes\bm{I}_N)
%\begin{bmatrix}
%\vu_1 \\
%\vu_2 \\
%\vdots \\
%\vu_t 
%\end{bmatrix}\\
&=\left(\paraB{I}_t\otimes\bm{W}-\paraB{B}_t\otimes\bm{I}_N\right)
\begin{bmatrix}
\vu_1 \\
\vu_2 \\
\vdots \\
\vu_t 
\end{bmatrix}.
\end{align}
\ES
Using the definition of $\bar{\bm{u}}_t$ in \eqref{Eqn:ut_bar_def}, we have
\BS\label{Eqn:OAMP_derive_u_ubar}
\begin{align}
\begin{bmatrix}
\bm{u}_1\\
\bm{u}_2\\
\vdots\\
\bm{u}_t
\end{bmatrix}
&=
\begin{bmatrix}
\bar{\bm{u}}_1\\
\bar{\bm{u}}_2\\
\vdots\\
\bar{\bm{u}}_t
\end{bmatrix}+
\begin{bmatrix}
\bm{0}_N & & & &\\
\para{d}_{1,1}\bm{I}_N & \bm{0}_N & & & \\
\para{d}_{2,1}\bm{I}_N &\para{d}_{2,2}\bm{I}_N &\bm{0}_N  & & \\
\vdots & \vdots &\vdots & \ddots &\\
\para{d}_{t-1,1}\bm{I}_N &\para{d}_{t-1,2}\bm{I}_N &\cdots &\para{d}_{t-1,t-1}\bm{I}_N &\bm{0}_N   
\end{bmatrix}
\begin{bmatrix}
\bm{r}_1\\
\bm{r}_2\\
\vdots\\
\bm{r}_t
\end{bmatrix}\\
&=
\begin{bmatrix}
\bar{\bm{u}}_1\\
\bar{\bm{u}}_2\\
\vdots\\
\bar{\bm{u}}_t
\end{bmatrix}
+
(\paraB{D}_t \otimes \bm{I}_N)
\begin{bmatrix}
\bm{r}_1\\
\bm{r}_2\\
\vdots\\
\bm{r}_t
\end{bmatrix}.
\end{align}
\ES
Substituting \eqref{Eqn:OAMP_derive_u_ubar} into \eqref{Eqn:OAMP_derive_r_u} yields
\BS
\begin{align}
\begin{bmatrix}
\vr_1 \\
\vr_2 \\
\vdots \\
\vr_t
\end{bmatrix}
%&=\left(\bm{I}_t\otimes\bm{W}-\mathsf{B}_t\otimes\bm{I}_N\right)\begin{bmatrix}
%\bar{\bm{u}}_1\\
%\bar{\bm{u}}_2\\
%\vdots\\
%\bar{\bm{u}}_t
%\end{bmatrix}+\left(\bm{I}_t\otimes\bm{W}-\mathsf{B}_t\otimes\bm{I}_N\right)(\mathsf{D}_t \otimes \bm{I}_N)
%\begin{bmatrix}
%\bm{r}_1\\
%\bm{r}_2\\
%\vdots\\
%\bm{r}_t
%\end{bmatrix}\\
&=\left(\paraB{I}_t\otimes\bm{W}-\paraB{B}_t\otimes\bm{I}_N\right)\begin{bmatrix}
\bar{\bm{u}}_1\\
\bar{\bm{u}}_2\\
\vdots\\
\bar{\bm{u}}_t
\end{bmatrix}+\left(\paraB{D}_t \otimes\bm{W}- \paraB{B}_t\paraB{D}_t\otimes\bm{I}_N\right)
\begin{bmatrix}
\bm{r}_1\\
\bm{r}_2\\
\vdots\\
\bm{r}_t
\end{bmatrix}.
\end{align}
\ES
Re-arranging the equation, we have
\BE
\left[\bm{I}_{tN}-\left(\paraB{D}_t \otimes\bm{W}- \paraB{B}_t\paraB{D}_t\otimes\bm{I}_N\right)\right]
\begin{bmatrix}
\bm{r}_1\\
\bm{r}_2\\
\vdots\\
\bm{r}_t
\end{bmatrix}
=\left(\paraB{I}_t\otimes\bm{W}-\paraB{B}_t\otimes\bm{I}_N\right)
\begin{bmatrix}
\bar{\bm{u}}_1\\
\bar{\bm{u}}_2\\
\vdots\\
\bar{\bm{u}}_t
\end{bmatrix}.
\EE
Note that both $ \paraB{B}_t$ and $ \paraB{D}_t$ are lower triangular matrices. Moreover, $ \paraB{D}_t$ is strictly lower triangular. We can then verify that $\bm{I}_{tN}-\left(\paraB{D}_t \otimes\bm{W}- \paraB{B}_t\paraB{D}_t\otimes\bm{I}_N\right)$ is lower triangular with diagonal elements all equal to one, and thus invertible. Solving the above equation yields
\BE\label{Eqn:lemma_reparametrization2}
\begin{bmatrix}
\bm{r}_1\\
\bm{r}_2\\
\vdots\\
\bm{r}_t
\end{bmatrix}
=\left[\bm{I}_{tN}-\left(\paraB{D}_t \otimes\bm{W}- \paraB{B}_t\paraB{D}_t\otimes\bm{I}_N\right)\right]^{-1}\left(\paraB{I}_t\otimes\bm{W}-\paraB{B}_t\otimes\bm{I}_N\right)
\begin{bmatrix}
\bar{\bm{u}}_1\\
\bar{\bm{u}}_2\\
\vdots\\
\bar{\bm{u}}_t
\end{bmatrix}.
\EE
At this point, we have written $(\bm{r}_1,\ldots,\bm{r}_t)$ as a linear combination of $(\bar{\bm{u}}_1,\ldots,\bar{\bm{u}}_t)$. It remains to verify the equivalence between \eqref{Eqn:lemma_reparametrization2} and \eqref{Eqn:lemma_reparametrization}.  

Let $\bm{W}=\bm{O\Lambda O}^\UT$ be the eigenvalue decomposition of $\bm{W}$ with $\bm{\Lambda}:=\{\lambda_1,\ldots,\lambda_N\}$. Using this decomposition, we have
\BS
\begin{eqnarray*}
& &\left[\bm{I}_{tN}-\left(\paraB{D}_t \otimes\bm{W}- \paraB{B}_t\paraB{D}_t\otimes\bm{I}_N\right)\right]^{-1}\left(\paraB{I}_t\otimes\bm{W}-\paraB{B}_t\otimes\bm{I}_N\right)\\
&=&(\paraB{I}_t\otimes\bm{O})\left[\bm{I}_{tN}-\paraB{D}_t \otimes\bm{\Lambda}+\paraB{B}_t\paraB{D}_t\otimes\bm{I}_N\right]^{-1}\left(\paraB{I}_t\otimes\bm{\Lambda}-\paraB{B}_t\otimes\bm{I}_N\right)(\paraB{I}_t\otimes\bm{O})^\UT\\
&\explain{(a)}{=}&(\paraB{I}_t\otimes\bm{O})\left[\bm{\Pi}\left(\bm{I}_{tN}-\bm{\Lambda}\otimes \paraB{D}_t +\bm{I}_N\otimes \paraB{B}_t\paraB{D}_t\right)\bm{\Pi}^\UT\right]^{-1}\bm{\Pi}\left(\bm{\Lambda}\otimes \paraB{I}_t-\bm{I}_N\otimes \paraB{B}_t\right)\bm{\Pi}^\UT(\paraB{I}_t\otimes\bm{O})^\UT\\
&=&(\paraB{I}_t\otimes\bm{O})\bm{\Pi}\left[\bm{I}_{tN}-\bm{\Lambda}\otimes \paraB{D}_t +\bm{I}_N\otimes \paraB{B}_t\paraB{D}_t\right]^{-1}\left(\bm{\Lambda}\otimes \paraB{I}_t-\bm{I}_N\otimes \paraB{B}_t\right)\bm{\Pi}^\UT(\paraB{I}_t\otimes\bm{O})^\UT\\
%&=& (\bm{I}_t\otimes\bm{Q})\bm{\Pi}
%\begin{bmatrix}
%(\bm{I}_t-\lambda_1\para{D}_t+\para{B}_t\para{D}_t)^{-1}(\lambda_1\bm{I}_t-\para{B}_t) & &\\
%& \ddots &\\
%& & (\bm{I}_t-\lambda_N\para{D}_t+\para{B}_t\para{D}_t)^{-1}(\lambda_N\bm{I}_t-\para{B}_t) 
%\end{bmatrix}
%\bm{\Pi}^\UT(\bm{I}_t\otimes\bm{Q})^\UT\\
&=& (\paraB{I}_t\otimes\bm{O})\bm{\Pi}\,
\text{diag}\left\{(\paraB{I}_t-\lambda_1\paraB{D}_t+\paraB{B}_t\paraB{D}_t)^{-1}(\lambda_1\paraB{I}_t-\paraB{B}_t) ,\ldots,
 (\paraB{I}_t-\lambda_N\paraB{D}_t+\paraB{B}_t\paraB{D}_t)^{-1}(\lambda_N\paraB{I}_t-\paraB{B}_t) 
\right\}\,
\bm{\Pi}^\UT(\paraB{I}_t\otimes\bm{O})^\UT\\
&\explain{(b)}{=}&(\paraB{I}_t\otimes\bm{O})\,\bm{\Pi}\,
\text{diag}\left\{\paraB{P}(\lambda_1),\ldots,\paraB{P}(\lambda_N)\right\}\,
\bm{\Pi}^\UT\,(\paraB{I}_t\otimes\bm{O})^\UT\\
& \explain{(c)}{=} & (\paraB{I}_t\otimes\bm{O}) 
\begin{bmatrix}
{P}_{1,1}(\bm{\Lambda}) &  &  & \\
{P}_{2,1}(\bm{\Lambda}) &{P}_{2,2}(\bm{\Lambda})  &  & \\
\vdots & \vdots &\ddots  & \\
{P}_{t,1}(\bm{\Lambda}) & {P}_{t,2}(\bm{\Lambda}) &\cdots &{P}_{t,t}(\bm{\Lambda})
\end{bmatrix}
 (\paraB{I}_t\otimes\bm{O})^\UT\\
 & = &\begin{bmatrix}
{P}_{1,1}(\bm{W}) &  &  & \\
{P}_{2,1}(\bm{W}) &{P}_{2,2}(\bm{W})  &  & \\
\vdots & \vdots &\ddots  & \\
{P}_{t,1}(\bm{W}) & {P}_{t,2}(\bm{W}) &\cdots &{P}_{t,t}(\bm{W})
\end{bmatrix},
\end{eqnarray*}
\ES
where
\begin{itemize}
\item In step (a), $\bm{\Pi}\in\mathbb{R}^{t N\times t N}$ denotes the unique permutation matrix that reverses the order of the Kronecker product:
\BS\label{Eqn:Pi_def}
\BE
\bm{M}_t\otimes \bm{M}_N=\bm{\Pi}(\bm{M}_N\otimes \bm{M}_t)\bm{\Pi}^\UT,\quad \forall \bm{M}_t\in\mathbb{R}^{t\times t},\  \bm{M}_N\in\mathbb{R}^{N\times N}.
\EE
Specifically, let $\sigma:[t N]\mapsto [t N]$ be a permutation with the following mapping rule:
\BE
\sigma\left((i_1-1)N+i_2\right) =(i_2-1)t+i_1,\quad \forall i_1\in[t],\ i_2\in[N].
\EE
Then, the operation $\bm{A}\mapsto \bm{\Pi}\bm{A}\bm{\Pi}^\UT$ ($\forall \bm{A}\in\mathbb{R}^{t N\times t N}$) is a re-ordering of the entries of $\bm{A}$ according to
\BE
\bm{A}_{m,n}\mapsto \bm{A}_{\sigma(m),\sigma(n)},\quad \forall m,n\in[tN].
\EE
\ES
\item In step (b), we introduced the notation:
\[
\paraB{P}(\lambda_i):=(\paraB{I}_t-\lambda_i\paraB{D}_t+\paraB{B}_t\paraB{D}_t)^{-1}(\lambda_i\paraB{I}_t-\paraB{B}_t)\in\mathbb{R}^{t\times t},\quad\forall i\in[N].
\]
Note that since $\paraB{B}_t$ and $\paraB{D}_t$ are lower triangular matrices, $(\paraB{P}(\lambda_i))_{i\in[N]}$ are also lower triangular.
%\item In step (c), we denoted
%\[
%P_{m,n}(\bm{\Lambda}):=\text{diag}\left\{\bm{P}(\lambda_1)[m,n],\bm{P}(\lambda_2)[m,n],\ldots,\bm{P}(\lambda_N)[m,n]\right\},\quad \forall m\in[t], n\in[m].
%\]
%where $\bm{P}(\lambda_i)[m,n]$ denotes the $(m,n)$th element of $\bm{P}(\lambda_i)$. Step (c) is consequence of the permutation matrix $\bm{\Pi}$.
\item Step (c) is consequence of the definition of the permutation matrix $\bm{\Pi}$, cf.~\eqref{Eqn:Pi_def}. Here, $(P_{i,j})_{1\le i\le t,1\le j\le i}$ are understood as a sequence of scalar functions, which are defined as follows:
\BS
\begin{align}
\begin{bmatrix}
P_{1,1}(\lambda)& 0 & \cdots  & 0 \\
P_{2,1}(\lambda)& P_{2,2}(\lambda)& \cdots  &  0 \\
\vdots & \vdots & \ddots & \\
P_{t,1}(\lambda)& P_{t,1}(\lambda)& \cdots  &  P_{t,t}(\lambda)
\end{bmatrix}
&=(\paraB{I}_t-\lambda\paraB{D}_t+\paraB{B}_t\paraB{D}_t)^{-1}(\lambda\paraB{I}_t-\paraB{B}_t),\quad\forall \lambda\in\mathbb{R}.\label{Eqn:step_c_proof}
%&=\sum_{i=0}^{t-1}\left(\lambda\bm{D}_t-\bm{B}_t\bm{D}_t\right)^{i}(\lambda\bm{I}_t-\bm{B}_t),
\end{align}
\ES
Note that the lower triangular structure of the matrix on the LHS of \eqref{Eqn:step_c_proof} is a consequence of the lower triangular structure of the matrix on the RHS, which is further due to the lower triangular structures of $\paraB{B}_t$ and $\paraB{D}_t$.
\end{itemize}
This completes the proof.
%\end{proof}

%===============================
\subsection{Choice of De-biasing Matrix (Proof of Lemma \ref{Lem:RI_AMP_zero_trace})}\label{App:zero_trace}

%\begin{proof}

\textbf{Proof of (1).} For $t=1$, we have $\paraB{B}_1=\para{b}_{1,1}$ and $\paraB{D}_1=0$. Then, $\paraB{P}_1(\lambda)=\lambda -\para{b}_{1,1}$. Clearly, $\para{b}_{1,1}=\mathbb{E}[\mathsf{\Lambda}]$ is the unique solution to $\mathbb{E}[\paraB{P}_1(\mathsf{\Lambda})]=\bm{0}_{t\times t}$. In what follows, we assume $t\ge2$. We partition $\paraB{B}_t$ (cf.~\eqref{Eqn:B_t_def}) and $\paraB{D}_t$ (cf.~\eqref{Eqn:D_def}) as follows
\BE\label{Eqn:Bt_partition}
\paraB{B}_t
=
\begin{bmatrix}
\paraB{B}_{t-1} & \bm{0}_{t-1\times 1}\\
{\paraB{b}}_{t} & \para{b}_{t,t} 
\end{bmatrix},
\quad
\paraB{D}_t
=
\begin{bmatrix}
\paraB{D}_{t-1} & \bm{0}_{t-1\times 1}\\
{\paraB{d}}_{t} & 0
\end{bmatrix},
\EE
where $\paraB{b}_t,\paraB{d}_t\in\mathbb{R}^{1\times (t-1)}$. Then, 
\BS
\begin{align*}
\paraB{P}_t(\lambda)&=(\paraB{I}_t-\lambda\paraB{D}_t+\paraB{B}_t\paraB{D}_t)^{-1}(\lambda\paraB{I}_t-\paraB{B}_t)\\
&=
\begin{bmatrix}
\paraB{I}_{t-1}-(\lambda\paraB{I}_{t-1}-\paraB{B}_{t-1})\paraB{D}_{t-1} & \bm{0}_{t-1\times 1}\\
{\paraB{b}}_{t}\paraB{D}_{t-1}-(\lambda-\para{b}_{t,t}\paraB{d}_t) & 1
\end{bmatrix}^{-1}
\begin{bmatrix}
\lambda\paraB{I}_{t-1}-\paraB{B}_{t-1} & \bm{0}_{t-1\times 1}\\
-{\paraB{b}}_{t}& \lambda-\para{b}_{t,t}
\end{bmatrix}\\
&\explain{(a)}{=}
\begin{bmatrix}
\left[\paraB{I}_{t-1}-(\lambda\paraB{I}_{t-1}-\paraB{B}_{t-1}\paraB{D}_{t-1})\right]^{-1} & \bm{0}_{t-1\times 1}\\
\left(-\paraB{b}_{t}\paraB{D}_{t-1}+(\lambda-\para{b}_{t,t})\paraB{d}_t\right)\left[\paraB{I}_{t-1}-(\lambda\paraB{I}_{t-1}-\paraB{B}_{t-1}\paraB{D}_{t-1})\right]^{-1}& 1
\end{bmatrix}
\begin{bmatrix}
\lambda\paraB{I}_{t-1}-\paraB{B}_{t-1} & \bm{0}_{t-1\times 1}\\
-{\paraB{b}}_{t}& \lambda-\para{b}_{t,t}
\end{bmatrix}\\
&\explain{(b)}{=}
\begin{bmatrix}
\paraB{P}_{t-1}(\lambda)& \bm{0}_{t-1\times 1}\\
\left(-\paraB{b}_{t}\paraB{D}_{t-1}+(\lambda-\para{b}_{t,t})\paraB{d}_t\right)\paraB{P}_{t-1}(\lambda)-\paraB{b}_t& \lambda-\para{b}_{t,t},
\end{bmatrix}
\end{align*}
\ES
where step (a) is due to the matrix inverse lemma and step (b) is is due to the definition of $\paraB{P}_{t-1}(\lambda)$,
Now, enforcing the condition $\mathbb{E}\left[ \paraB{P}_t(\mathsf{\Lambda}) \right]=\bm{0}_{t\times t}$ gives us
\BS
\begin{align}
&\mathbb{E}\left[ \paraB{P}_{t-1}(\mathsf{\Lambda}) \right]=\bm{0}_{t-1\times t-1},\\
&\mathbb{E}\left[\left(-\paraB{b}_{t}\paraB{D}_{t-1}+(\mathsf{\Lambda}-\para{b}_{t,t})\paraB{d}_t\right)\paraB{P}_{t-1}(\mathsf{\Lambda})-\paraB{b}_t\right]=\bm{0}_{1\times t-1},\\
& \mathbb{E}\left[\mathsf{\Lambda}-\para{b}_{t,t}\right]=0.
\end{align}
\ES
Notice that $\paraB{P}_{t-1}(\lambda)$ only depends on the sub-matrices $\paraB{B}_{t-1}$ and $\paraB{D}_{t-1}$ (cf.~\eqref{Eqn:Bt_partition}) but not on the last rows of $\paraB{B}_{t}$ and $\paraB{D}_t$. Suppose that the equation $\mathbb{E}\left[ \paraB{P}_{t-1}(\mathsf{\Lambda}) \right]=\bm{0}_{t-1\times t-1}$ uniquely determines the sub-matrix $\paraB{B}_{t-1}$ (as a function of $\paraB{D}_{t-1}$). The last row of $\paraB{B}_{t-1}$, namely $(\paraB{b}_t,\para{b}_{t,t})$ are uniquely determined as 
\BS
\begin{align*}
\para{b}_{t,t} &= m_1,\\
\paraB{b}_t &=-\paraB{b}_{t}\paraB{D}_{t-1}\mathbb{E}\left[\paraB{P}_{t-1}(\mathsf{\Lambda})\right] +\paraB{d}_t\mathbb{E}\left[\left(\mathsf{\Lambda}-m_1\right)\paraB{P}_{t-1}(\mathsf{\Lambda})\right]\\
&=\paraB{d}_t\mathbb{E}\left[\left(\mathsf{\Lambda}-m_1\right)\paraB{P}_{t-1}(\mathsf{\Lambda})\right]\\
&=\paraB{d}_t\mathbb{E}\left[\mathsf{\Lambda}\paraB{P}_{t-1}(\mathsf{\Lambda})\right],
\end{align*}
\ES
where we have used $\mathbb{E}\left[ \paraB{P}_{t-1}(\mathsf{\Lambda}) \right]=\bm{0}_{t-1\times t-1}$. Hence, we have shown that, if $\mathbb{E}\left[ \paraB{P}_{t-1}(\mathsf{\Lambda}) \right]=\bm{0}_{t-1\times t-1}$ uniquely determines the sub-matrix $\paraB{B}_{t-1}$, then $\mathbb{E}\left[ \paraB{P}_{t}(\mathsf{\Lambda}) \right]=\bm{0}_{t\times t}$ uniquely determines the matrix $\paraB{B}_{t}$. Hence, the claim holds by induction.

\textbf{Proof of (2) and (4).} Claim (1) shows that the solution \eqref{Eqn:trace_free_eqn} is unique. It remains to verify that this a solution $\paraB{B}_t$ can be represented as a polynomial of $\paraB{D}_t$. Moreover, the coefficients of the polynomial $(\alpha_i)_{i\ge1}$ only depend on the law $\mu$, but not on $\paraB{D}_t$. In other words, there exists a single polynomial that solves \eqref{Eqn:trace_free_eqn} for all $\paraB{D}_t$. Assume that
\BE\label{Eqn:Bt_poly}
\paraB{B}_t = \sum_{i=1}^t \alpha_i\paraB{D}_t^{i-1},
\EE
where the sequence $(\alpha_i)_{i\ge1}$ are yet to be determined. (Note that $\paraB{D}_t$ is strictly lower triangular and hence $\paraB{D}_t^{i}=\bm{0}_{t\times t},\forall i\ge t$.) The matrix $\paraB{P}_t$ can be written as (cf.~\eqref{Eqn:Pt_inverse_def}):
\BS
\begin{align*}
\paraB{P}_t(\lambda)&=(\paraB{I}_t-\lambda\paraB{D}_t+\paraB{B}_t\paraB{D}_t)^{-1}(\lambda\paraB{I}_t-\paraB{B}_t)\\
&=\sum_{i=1}^t\left(\lambda\paraB{D}_t-\paraB{B}_t\paraB{D}_t\right)^{i-1}(\lambda\paraB{I}_t-\paraB{B}_t),
\end{align*}
\ES
where the second step is due to the following identity for a strictly lower triangular matrix $\paraB{A}\in\mathbb{R}^{t\times t}$: $(\paraB{I}_t-\paraB{A})^{-1}=\paraB{I}+\paraB{A}+\paraB{A}^2+\cdots+\paraB{A}^{t-1}$. Clearly, assuming $\paraB{B}_t$ is a polynomial of $\paraB{D}_t$, $\paraB{P}_t$ is also a polynomial of $\paraB{D}_t$, which we denote as:
\BE\label{Eqn:Pt_poly}
\paraB{P}_t(\lambda)=\sum_{i=1}^t Q_i(\lambda)\paraB{D}_t^{i-1},
\EE
where the coefficients $(Q_i(\lambda))_{i\ge1}$ depend on $\lambda$. We next provide a recursive characterization of $(Q_i(\lambda))_{i\ge1}$ in terms of $(\alpha_i)_{i\ge1}$. From the definition of $\paraB{P}_t(\lambda)$ (cf.~\eqref{Eqn:Pt_inverse_def}), the following identity holds
\BE\label{Eqn:identically_zero_0}
(\paraB{I}_t-\lambda\paraB{D}_t+\paraB{B}_t\paraB{D}_t)\paraB{P}_t(\lambda)-\lambda\paraB{I}_t+\paraB{B}_t=\bm{0}_{t\times t}.
\EE
Using the polynomial representations of $\paraB{B}_t$ and $\paraB{P}_t(\lambda)$ in \eqref{Eqn:Bt_poly} and \eqref{Eqn:Pt_poly}, we write the LHS of the above equation as
\BS\label{Eqn:identically_zero}
\begin{align}
&(\paraB{I}_t-\lambda\paraB{D}_t+\paraB{B}_t\paraB{D}_t)\paraB{P}_t(\lambda)-\lambda\paraB{I}_t+\paraB{B}_t\\
&=\left(\paraB{I}_t-\lambda\paraB{D}_t+\sum_{i=1}^\infty \alpha_i\paraB{D}_t^{i}\right)\sum_{i=1}^\infty Q_i(\lambda)\paraB{D}_t^{i-1}-\lambda\paraB{I}_t+\sum_{i=1}^\infty\alpha_i\paraB{D}_t^{i-1}\\
&=\sum_{i=1}^\infty Q_i(\lambda)\paraB{D}_t^{i-1} -\sum_{i=1}^\infty \lambda Q_i(\lambda)\paraB{D}_t^{i}+\sum_{i=1}^\infty\sum_{j=1}^\infty \alpha_i Q_j(\lambda)\paraB{D}_t^{i+j-1}-\lambda\paraB{I}_t+\sum_{i=1}^\infty\alpha_i\paraB{D}_t^{i-1}\\
%&\explain{(a)}{=}\left(\bm{I}_t-\lambda\bm{D}_t+\sum_{i=1}^\infty \alpha_i\bm{D}_t^{i}\right)\sum_{i=1}^\infty q_i(\lambda)\bm{D}_t^{i-1}-\lambda\bm{I}_t+\sum_{i=1}^\infty\alpha_i\bm{D}_t^{i-1}\\
&=\sum_{i=1}^\infty Q_i(\lambda)\paraB{D}_t^{i-1} -\sum_{i=1}^\infty \lambda Q_i(\lambda)\paraB{D}_t^{i}+\sum_{i=1}^\infty\sum_{k=1+i}^\infty\alpha_iQ_{k-i}(\lambda)\paraB{D}_t^{k-1}-\lambda\paraB{I}_t+\sum_{i=1}^\infty\alpha_i\paraB{D}_t^{i-1}\\
&\explain{(a)}{=}\sum_{i=1}^\infty Q_i(\lambda)\paraB{D}_t^{i-1} -\sum_{i=2}^\infty \lambda Q_{i-1}(\lambda)\paraB{D}_t^{i-1}+\sum_{k=2}^\infty\left(\sum_{i=1}^{k-1}\alpha_iQ_{k-i}(\lambda)\right)\paraB{D}_t^{k-1}-\lambda\paraB{I}_t+\sum_{i=1}^\infty\alpha_i\paraB{D}_t^{i-1}\\
&=(Q_1(\lambda)-\lambda+\alpha_1)\paraB{I}_t+\sum_{n=2}^\infty\left(Q_n(\lambda)-\lambda Q_{n-1}(\lambda)+\sum_{i=1}^{n-1}\alpha_iQ_{n-i}(\lambda)+\alpha_n\right)\paraB{D}_t^{n-1},
\end{align}
\ES
where step (a) is due to a swap of the summation order. Note that we have represented the finite order polynomials of $\paraB{D}_t$ as infinite power series (since $\paraB{D}_t^i=\bm{0}_{t\times t},\forall i\ge t$) in the above equation. This representation is somewhat more convenient for calculations. Since \eqref{Eqn:identically_zero} is identically zero (from \eqref{Eqn:identically_zero_0}), we must have that
\BS\label{Eqn:qn_recursion}
\begin{align}
Q_1(\lambda)&=\lambda-\alpha_1,\\
Q_n(\lambda)&=\lambda Q_{n-1}(\lambda)-\sum_{i=1}^{n-1}\alpha_iQ_{n-i}(\lambda)-\alpha_n,\quad\forall n\ge2,
\end{align}
\ES
which provides a recursive definition of $(Q_i(\lambda))_{i\ge1}$ in terms of $(a_i)_{i\ge1}$. Recall that $\paraB{P}_t(\lambda)=\sum_{i=1}^t Q_i(\lambda)\paraB{D}_t^{i-1}$. Therefore, if $\mathbb{E}[Q_i(\mathsf{\Lambda})]=0,\forall i\in[t]$, then $\mathbb{E}\left[\paraB{P}_t(\mathsf{\Lambda})\right]=\bm{0}_{t\times t}$, $\forall \paraB{D}_t$. Taking expectations over $\mathsf{\Lambda}\sim\mu$ in \eqref{Eqn:qn_recursion} and setting $\mathbb{E}[Q_i(\mathsf{\Lambda})]=0,\forall i\in[t]$ leads to the following choice of $(\alpha_i)_{i\ge1}$:
\BS
\begin{align}
\alpha_1&=\mathbb{E}[\mathsf{\Lambda}],\\
\alpha_n&=\mathbb{E}[\mathsf{\Lambda} Q_{n-1}(\mathsf{\Lambda})],\quad \forall n\ge2.
\end{align}
\ES
Substituting the above equation into \eqref{Eqn:qn_recursion} leads to the following recursive characterization of $(Q_i(\lambda))_{i\ge1}$:
\BS\label{Eqn:qn_recursion2}
\begin{align}
Q_n(\lambda)&=\lambda Q_{n-1}(\lambda)-\sum_{i=1}^{n-1}\alpha_iQ_{n-i}(\lambda)-\alpha_n\\
&=\lambda Q_{n-1}(\lambda)-\sum_{i=1}^{n-1}\mathbb{E}[\mathsf{\Lambda} Q_{i-1}(\mathsf{\Lambda})]\cdot Q_{n-i}(\lambda)-\mathbb{E}[\mathsf{\Lambda} Q_{n-1}(\mathsf{\Lambda})]\\
&=\lambda Q_{n-1}(\lambda)-\sum_{i=1}^{n}\mathbb{E}[\mathsf{\Lambda} Q_{i-1}(\mathsf{\Lambda})]\cdot Q_{n-i}(\lambda),
\end{align}
\ES
where we defined $Q_0(\lambda):=1$ in the last step. This proves the claim.

\textbf{Proof of (3).} Item (2) shows that the matrix $\paraB{B}_t$ that solves \eqref{Eqn:trace_free_eqn} is a polynomial in $\paraB{D}_t$ with coefficients characterized by \eqref{Eqn:poly_recursive_def}. The claimed result is then a consequence of this recursion together with Proposition \ref{Eqn:Q_def}.
%\end{proof}

%=================================================
\subsection{State Evolution of RI-AMP (Proof of Theorem \ref{Th:AMP_SE})}\label{App:AMP_SE}

We first address some subtle points in reducing RI-AMP to OAMP. We then apply the general state evolution result of OAMP to calculate the claimed update equation of the covariance matrix $\paraB{\bm{\Sigma}}_t$.

\paragraph{Reduction to OAMP:} From Lemma \ref{Lem:FOM_D}, the iterates $\bm{r}_t$ in RI-AMP can be written as (cf.~\eqref{Eqn:lemma_reparametrization_recall})
\BS\label{Eqn:OAMP_RI_AMP_app}
\begin{align}
\bm{r}_t &= \hat{P}_{t,1}(\bm{W})\bar{\bm{u}}_1+\cdots + \hat{P}_{t,t}(\bm{W})\bar{\bm{u}}_t,\\
{\bm{u}}_{t+1} &= \eta_{t+1} (\vr_1,\ldots,\vr_t),\\
\bar{\bm{u}}_{t+1} &= \eta_{t+1} (\vr_1,\ldots,\vr_t)-\left(\langle\partial_1 \bm{u}_{t+1}\rangle\cdot\bm{r}_1+\cdots+\langle\partial_{t} \bm{u}_{t+1}\rangle \cdot\bm{r}_{t}\right),
\end{align}
\ES
where the sequence of functions $(\hat{P}_{t,i}(\lambda))_{1\le t,1\le i\le t}$ are given by the last row of $\hat{\paraB{P}}_t(\lambda)\in\mathbb{R}^{t\times t}$:
\BE
\hat{\paraB{P}}_t(\lambda)=\sum_{i=1}^t Q_i(\lambda)\hat{\paraB{\Phi}}_t^{i-1}.
\EE
The above iterations can be written into an OAMP algorithm as defined in Definition \ref{Def:LM_OAMP} by introducing intermediate variables $\bm{z}_{t,i} = \hat{P}_{t,i}(\bm{W})\bar{\bm{u}}_i$ and properly re-indexing the iterates. The only subtle point here is that the functions $(\hat{P}_{t,i}(\lambda))_{1\le t,1\le i\le t}$, which depend on the empirical divergences, are random and satisfy the required trace-free condition in OAMP only in certain asymptotical sense. Nevertheless, similar to the proof in Appendix \ref{App:proof_OAMP}, we can use a simple approximation argument to show that this difference is asymptotically negligible. More specifically, we note that if we replace the functions $(\hat{P}_{t,i}(\lambda))_{1\le t,1\le i\le t}$ by $({P}_{t,i}(\lambda))_{1\le t,1\le i\le t}$, which are given by the last row of the following matrix
\BE
{\paraB{P}}_t(\lambda)=\sum_{i=1}^t Q_i(\lambda){\paraB{\Phi}}_t^{i-1},
\EE
then the state evolution in Appendix \ref{App:proof_OAMP} applies. Here, the empirical divergences $\hat{\paraB{\Phi}}_t$ are replaced by their limits ${\paraB{\Phi}}_t:=\underset{N\to\infty}{\text{plim}}\ \hat{\paraB{\Phi}}_t$. 
(Note that the above limit holds by an inductive argument: we assume $\hat{\paraB{\Phi}}_t\overset{\mathbb{P}}{\to}{\paraB{\Phi}}_t$, then prove the state evolution using our following arguments, and then show $\hat{\paraB{\Phi}}_{t+1}\overset{\mathbb{P}}{\to}{\paraB{\Phi}}_{t+1}$.) This matrix satisfies the trace-free condition: $\mathbb{E}_{\mathsf{\Lambda}\sim\mu}\left[\paraB{P}_t(\mathsf{\Lambda})\right]=\bm{0}_{t\times t}$.

Then, we bound the approximation error between these two versions of OAMP, based on the same arguments used in Appendix \ref{App:proof_OAMP}. We do not repeat the full argument, but it suffices to show $\|P_{t,i}(\bm{W})-\hat{P}_{t,i}(\bm{W})\|_{\text{op}}\overset{\mathbb{P}}{\to}0$ for all $i\in[t]$. Note that $\hat{P}_{t,i}(\lambda)$ and $P_{t,i}(\lambda)$ are linear combinations of $(Q_i(\lambda))_{i\ge1}$, which we denote as
\BS
\begin{align}
\hat{P}_{t,i}(\lambda) &:=\hat{\alpha}_{t,1}Q_1(\lambda)+\hat{\alpha}_{t,2}Q_2(\lambda)+\cdots+\hat{\alpha}_{t,t}Q_t(\lambda),\\
{P}_{t,i}(\lambda) &:={\alpha}_{t,1}Q_1(\lambda)+{\alpha}_{t,2}Q_2(\lambda)+\cdots+{\alpha}_{t,t}Q_t(\lambda).
\end{align}
\ES
Then, the following holds for all $t\ge1$ and $1\le i\le t$:
\BS
\begin{align}
\|P_{t,i}(\bm{W})-\hat{P}_{t,i}(\bm{W})\|_{\text{op}} &=\left\|\sum_{i=1}^t (\alpha_{t,i}-\hat{\alpha}_{t,i})Q_i(\bm{W})\right\|_{\text{op}}\le  \sum_{i=1}^t|\alpha_{t,i}-\hat{\alpha}_{t,i}|\cdot \|Q_i(\bm{W})\|_{\text{op}}\overset{\mathbb{P}}{\longrightarrow}0,
\end{align}
\ES
where the last step is due to the assumption that $\|\bm{W}\|_{\text{op}}$ is bounded by an $N$-independent constant (see Assumption \ref{Ass:RI-AMP}) and $(Q_i)_{i\ge1}$ are continuous functions (in the current case, polynomials), and $\hat{\alpha}_{t,i}\overset{\mathbb{P}}{\to}{\alpha}_{t,i},\forall i=1,\ldots,t$ (from the fact that $\hat{\paraB{\Phi}}_t\overset{\mathbb{P}}{\to}{\paraB{\Phi}}_t$).

\paragraph{Covariance matrix in state evolution:} From the above arguments, we can apply Theorem \ref{The:OAMP_SE} to show that the empirical distributions of $(\bm{r}_1,\ldots,\bm{r}_t)$ converges to a joint Gaussian distribution with zero mean and covariance
\BS
\begin{align}
\paraB{\Sigma}_t &\explain{(a)}{=}\mathbb{E}\left[\paraB{P}_t(\mathsf{\Lambda})
\begin{bmatrix}
\bar{\mathsf{U}}_1\\
\vdots\\
\bar{\mathsf{U}}_t
\end{bmatrix}
\begin{bmatrix}
\bar{\mathsf{U}}_1,\ldots,\bar{\mathsf{U}}_t
\end{bmatrix}
\paraB{P}_t(\mathsf{\Lambda})^\UT
\right],\quad \mathsf{\Lambda}\sim\mu,\\
&\explain{(b)}{=}
\mathbb{E}\left[\paraB{P}_t(\mathsf{\Lambda})
\bar{\paraB{\Delta}}_t
\paraB{P}_t(\mathsf{\Lambda})^\UT,
\right]
% &=\mathbb{E}\left[ \left(\sum_{i=1}^t Q_i(\mathsf{\Lambda}){\paraB{\Phi}}_t^{i-1}\right)
% \bar{\paraB{\Delta}}_t
% \left(\sum_{i=1}^t Q_i(\mathsf{\Lambda}){\paraB{\Phi}}_t^{i-1}\right)^\UT
% \right]\\
% &=\sum_{i=1}^t\sum_{j=1}^t \mathbb{E}_{\mathsf{\Lambda}\sim\mu}\left[Q_i(\mathsf{\Lambda})Q_j(\mathsf{\Lambda})\right]\cdot \paraB{\Phi}_t^{i-1}\, \bar{\paraB{\Delta}}_t\, (\paraB{\Phi}_t^{j-1})^\UT ,
\end{align}
\ES
where step (a) follows Theorem \ref{The:OAMP_SE} with the empirical divergences replaced by the population-level divergences, namely (cf.~\eqref{Eqn:Pt_Phi_t_lemma1}), ${\paraB{P}}_t(\lambda)=\sum_{i=1}^t Q_i(\lambda){\paraB{\Phi}}_t^{i-1}$; step (b) is from the fact that the state evolution random variables $(\bar{\mathsf{U}}_1,\ldots,\bar{\mathsf{U}}_t)$ are independent of $\mathsf{\Lambda}$.

%=================================================
\subsection{Equivalence of State Evolution (Proof of Proposition \ref{Pro:equivalence})}\label{App:SE_equivalence}

The following lemma will be used in our proof of Proposition \ref{Pro:equivalence}.

\begin{lemma}\label{Lem:QQ_kappa}
Let $(Q_n)_{n\ge1}$ and be defined as in \eqref{Eqn:Q_def}. Then, the following holds
\BE
\mathbb{E}\left[Q_IQ_J\right] = \sum_{m=1}^I\sum_{n=1}^J \mathbb{E}\left[Q_{I-m}Q_{J-n}\right]\cdot \kappa_{n+m},\quad\forall I,J\ge1.
\EE
\end{lemma}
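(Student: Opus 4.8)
The plan is to prove Lemma~\ref{Lem:QQ_kappa} by a generating-function argument, working in the ring of formal power series whose coefficients are polynomials in $\mathsf{\Lambda}$ (equivalently, random variables). First I would record two facts that follow at once from \eqref{Eqn:Q_def} and Proposition~\ref{Lem:cumulants}: that $\mathbb{E}[\mathsf{\Lambda}Q_{n-1}]=\kappa_n$, and that $\mathbb{E}[Q_n]=0$ for every $n\ge 1$ (a one-line induction: $\mathbb{E}[Q_n]=\kappa_n-\sum_{i=1}^n\kappa_i\,\mathbb{E}[Q_{n-i}]=\kappa_n-\kappa_n\mathbb{E}[Q_0]=0$, using $Q_0=1$ and $\mathbb{E}[Q_{n-i}]=0$ for $0<n-i<n$). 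Then I would introduce $G(z):=\sum_{n\ge 0}Q_n z^n$ and $K(z):=\sum_{k\ge 1}\kappa_k z^k$; multiplying \eqref{Eqn:Q_def} by $z^n$ and summing gives $G(z)=1+z\mathsf{\Lambda}\,G(z)-K(z)G(z)$, hence $G(z)=\big(1-z\mathsf{\Lambda}+K(z)\big)^{-1}$. Writing $a(z):=1+K(z)$, we get $G(z)=\big(a(z)-z\mathsf{\Lambda}\big)^{-1}$, and $\mathbb{E}[G(z)]=\sum_{n\ge 0}\mathbb{E}[Q_n]z^n=1$ by the second fact.

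Next I would reformulate the claimed identity. Let $F(z,w):=\mathbb{E}[G(z)G(w)]=\sum_{I,J\ge 0}\mathbb{E}[Q_IQ_J]z^Iw^J$ and $C(z,w):=\sum_{m,n\ge 1}\kappa_{m+n}z^mw^n$. Summing the left-hand side of the lemma against $z^Iw^J$ over $I,J\ge 1$ and using $\mathbb{E}[Q_0Q_J]=\mathbb{E}[Q_J]=\delta_{J,0}$ to strip off the $I=0$ and $J=0$ slices of $F$ gives $\sum_{I,J\ge 1}\mathbb{E}[Q_IQ_J]z^Iw^J=F(z,w)-1$; summing the right-hand side and re-indexing $(I,J)=(m+I',n+J')$ with $m,n\ge1$, $I',J'\ge0$ gives $C(z,w)F(z,w)$. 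Hence Lemma~\ref{Lem:QQ_kappa} is equivalent to the single identity $F(z,w)\big(1-C(z,w)\big)=1$.

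It remains to verify that identity. A geometric summation (the $k=1$ contribution cancels) gives $C(z,w)=\dfrac{wK(z)-zK(w)}{z-w}$, hence $1-C(z,w)=\dfrac{z\,a(w)-w\,a(z)}{z-w}$. For $F$, I would use the algebraic identity $z\big(a(w)-w\mathsf{\Lambda}\big)-w\big(a(z)-z\mathsf{\Lambda}\big)=z\,a(w)-w\,a(z)$, which yields the partial-fraction decomposition
\[
G(z)G(w)=\frac{1}{\big(a(z)-z\mathsf{\Lambda}\big)\big(a(w)-w\mathsf{\Lambda}\big)}=\frac{z\,G(z)-w\,G(w)}{z\,a(w)-w\,a(z)}.
\]
Taking $\mathbb{E}[\cdot]$ (the scalar prefactor pulls out) and using $\mathbb{E}[G(z)]=\mathbb{E}[G(w)]=1$ gives $F(z,w)=\dfrac{z-w}{z\,a(w)-w\,a(z)}=\dfrac{1}{1-C(z,w)}$, which is exactly what is needed; reading off the coefficient of $z^Iw^J$ for $I,J\ge1$ recovers the lemma.

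I expect the only delicate points to be bookkeeping rather than conceptual: (i) keeping the boundary terms straight when passing between the ranges $I,J\ge1$ and $I,J\ge0$ in the second step — this is precisely where $\mathbb{E}[Q_n]=0$ is used; and (ii) justifying the formal-series manipulations — all series lie in $\mathbb{R}[[z,w]]$, the factors $1-z\mathsf{\Lambda}+K(z)$ and $1-C(z,w)$ have constant term $1$ and hence are invertible, and the common factor $z-w$ appearing in numerator and denominator of $F$ may be cancelled since $\mathbb{R}[[z,w]]$ is an integral domain. As an alternative to generating functions, one can prove the identity by induction on $I$: computing $\mathbb{E}[\mathsf{\Lambda}Q_{I-1}Q_{J-1}]$ in two ways via $\mathsf{\Lambda}Q_m=\sum_{i=0}^{m+1}\kappa_iQ_{m+1-i}$ produces a recursion linking $\mathbb{E}[Q_IQ_{J-1}]$, $\mathbb{E}[Q_{I-1}Q_J]$ and strictly lower terms, with base case $\mathbb{E}[Q_1Q_J]=\mathbb{E}[\mathsf{\Lambda}Q_J]-\kappa_1\mathbb{E}[Q_J]=\kappa_{J+1}$; but the generating-function route is the cleaner of the two and is the one I would write up.
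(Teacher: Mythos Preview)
Your proof is correct and takes a genuinely different route from the paper's. The paper argues directly at the level of the $Q_n$'s: it uses the recursion \eqref{Eqn:Q_recursion_app} to derive a one-step identity relating $Q_IQ_J$ to $Q_{I-1}Q_{J+1}$ plus lower-order cross terms, telescopes this $I$ times to reach $Q_0Q_{I+J}=Q_{I+J}$, and then shows via an index change of variables that the accumulated ``higher-order'' pieces (their \textsf{Term~II}) cancel identically, leaving exactly the double sum in the lemma after taking expectations. Your approach instead packages the whole recursion into the single functional relation $G(z)=\big(a(z)-z\mathsf{\Lambda}\big)^{-1}$ and reduces the lemma to the scalar identity $F(z,w)\big(1-C(z,w)\big)=1$, which you verify by the partial-fraction trick $G(z)G(w)=\dfrac{zG(z)-wG(w)}{za(w)-wa(z)}$ together with $\mathbb{E}[G(z)]=1$.

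What each approach buys: the paper's computation is entirely elementary and stays at the coefficient level, but the bookkeeping is heavier (the telescoping, the two nested sums, and the change of variables that kills \textsf{Term~II}). Your generating-function route is shorter and more conceptual --- the cancellation that the paper has to engineer by hand is absorbed into the algebraic identity $z\big(a(w)-w\mathsf{\Lambda}\big)-w\big(a(z)-z\mathsf{\Lambda}\big)=za(w)-wa(z)$ --- at the cost of working in $\mathbb{R}[\mathsf{\Lambda}][[z,w]]$ and needing the integral-domain observation to cancel the non-unit factor $z-w$ (equivalently, $za(w)-wa(z)$ has zero constant term, so the ``division'' is really the cancellation you flag in point~(ii)). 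Your alternative sketch at the end --- expanding $\mathsf{\Lambda}Q_{I-1}Q_{J-1}$ two ways --- is essentially the germ of the paper's telescoping argument.
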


\begin{proof}
From Proposition \ref{Lem:cumulants}, \eqref{Eqn:Q_def} can be rewritten as
\BE\label{Eqn:Q_recursion_app}
Q_n= \mathsf{\Lambda}Q_{n-1}-\sum_{i=1}^n \kappa_i\cdot Q_{n-i},\quad\forall n\ge1,
\EE
with $Q_0:=1$. Using the above identity, we can relate $Q_IQ_J$ and $Q_{I-1}Q_{J+1}$ as follows:
\BS\label{Eqn:Q_IJ_1}
\begin{align}
Q_IQ_J &\explain{(a)}{=}\left(\mathsf{\Lambda}Q_{I-1}-\sum_{i=1}^I \kappa_i Q_{I-i}\right)Q_J\\
&=\mathsf{\Lambda}Q_{I-1}Q_J-\sum_{i=1}^I\kappa_i Q_{I-i}Q_J\\
&=\mathsf{\Lambda}Q_{I-1}Q_J- \sum_{j=1}^{J+1}\kappa_j Q_{I-1}Q_{J+1-j}  +\sum_{j=1}^{J+1}\kappa_j Q_{I-1}Q_{J+1-j}  -\sum_{i=1}^I\kappa_iQ_{I-i}Q_J\\
&=Q_{I-1}\Big(\mathsf{\Lambda}Q_J-\sum_{j=1}^{J+1}\kappa_j Q_{J+1-j}\Big)+ Q_{I-1}\Big(\sum_{j=1}^{J+1}\kappa_j Q_{J+1-j} \Big) -\sum_{i=1}^I\kappa_i Q_{I-i}Q_J\\
&\explain{(b)}{=}Q_{I-1}Q_{J+1} + Q_{I-1}\Big(\sum_{j=1}^{J+1}\kappa_j Q_{J+1-j} \Big) -\sum_{i=1}^I\kappa_iQ_{I-i}Q_J,
\end{align}
\ES
where both step (a) and step (b) used the identity \eqref{Eqn:Q_recursion_app}. We can apply the same manipulations to relate $Q_{I-1}Q_{J+1} $ and $Q_{I-2}Q_{J+2} $. Continuing for $\ell$ steps eventually us the following identity between $Q_{I-1}Q_{J+1} $ and $Q_{I-\ell}Q_{J+\ell} $:
\BE
Q_IQ_J   = Q_{I-\ell}Q_{J+\ell}+\sum_{k=1}^\ell\sum_{j=1}^{J+k}\kappa_jQ_{I-k}Q_{J+k-j}-\sum_{k=1}^\ell\sum_{i=1}^{I-k}\kappa_iQ_{I-k+1-i}Q_{J+k-1}.
\EE
Setting $\ell=I$ in the above identity yields
\BS\label{Eqn:app_QQ_1}
\begin{align}
Q_IQ_J  & = Q_{0}Q_{J+I}+\sum_{k=1}^{I}\sum_{j=1}^{J+k}\kappa_jQ_{I-k}Q_{J+k-j}-\sum_{k=1}^{I}\sum_{i=1}^{I-k}\kappa_iQ_{I-k+1-i}Q_{J+k-1}\\
&=Q_{J+I}+\sum_{k=1}^{I}\sum_{j=1}^{J+k}\kappa_jQ_{I-k}Q_{J+k-j}-\sum_{k=1}^{I}\sum_{i=1}^{I-k}\kappa_iQ_{I-k+1-i}Q_{J+k-1}\quad (Q_0=1)\\
&=Q_{J+I}+\underbrace{\sum_{k=1}^{I}\sum_{j=k+1}^{J+k}\kappa_jQ_{I-k}Q_{J+k-j}}_{\textsf{Term I}}+\underbrace{\sum_{k=1}^{I}\sum_{j=1}^{k}\kappa_jQ_{I-k}Q_{J+k-j}-\sum_{k=1}^{I}\sum_{i=1}^{I-k}\kappa_iQ_{I-k+1-i}Q_{J+k-1}}_{\textsf{Term II}},
\end{align}
\ES
where in the last step we split the sum over $j$ into two terms. By a change of variable, we rewrite $\textsf{Term I}$ as
\BE\label{Eqn:app_QQ_2}
\textsf{Term I}=\sum_{k=1}^{I}\sum_{j'=1}^{J}\kappa_{k+j'}Q_{I-k}Q_{J-j'}.
\EE
Note that $\textsf{Term II}$ involves $(Q_i)_{J+1\le i\le I+J-1 }$, which are higher order terms that do not appear in the desired result. It turns out that $\textsf{Term II}$ vanishes:
\BS\label{Eqn:app_QQ_3}
\begin{align}
\textsf{Term II} &= \sum_{k=1}^{I}\sum_{j=1}^{k}\kappa_jQ_{I-k}Q_{J+k-j}-\sum_{k=1}^{I}\sum_{i=1}^{I-k}\kappa_iQ_{I-k+1-i}Q_{J+k-1}\\
%&=\sum_{j=1}^{I-1}\sum_{k=j}^{I-1}\kappa_jQ_{I-k}Q_{J+k-j}-\sum_{k=1}^{I-1}\sum_{i=1}^{I-k}\kappa_iQ_{I-k+1-i}Q_{J+k-1}\quad (\text{change orders of summation})\\
%&=\sum_{j=1}^{I-1}\sum_{k'=1}^{I-j}\kappa_jQ_{I-(k'+j-1)}Q_{J+k'-1}-\sum_{k=1}^{I-1}\sum_{i=1}^{I-k}\kappa_iQ_{I-k+1-i}Q_{J+k-1}\quad (k:=k'+j-1)\\
&\explain{(a)}{=}\sum_{k'=1}^{I}\sum_{i=1}^{I-k'}\kappa_iQ_{I-(k'+i-1)}Q_{J+k'-1}-\sum_{k=1}^{I}\sum_{i=1}^{I-k}\kappa_iQ_{I-k+1-i}Q_{J+k-1},\\
&=0,
\end{align}
\ES
where step (a) is due to a change of variable $(k,j)\mapsto(k',i)$ via the map
\[
\begin{bmatrix}
k'\\
i
\end{bmatrix}
=
\begin{bmatrix}
1 & -1\\
0 & 1
\end{bmatrix}
\begin{bmatrix}
k\\
j
\end{bmatrix}
+
\begin{bmatrix}
1\\
0
\end{bmatrix}.
\]
Note that the map is one-to-one from $\{(k,j):1\le k\le I,1\le j\le k\}$ to $\{(k',i):1\le k'\le I,1\le i\le I-k'\}$, and hence the reformulation of the nested sums holds.

Summarizing \eqref{Eqn:app_QQ_1}-\eqref{Eqn:app_QQ_3} yields
\BE
Q_IQ_J  = Q_{J+I} + \sum_{k=1}^{I}\sum_{j=1}^{J}\kappa_{k+j}Q_{I-k}Q_{J-j}.
\EE
The claimed identity then follows from taking expectation in the above identity and recalling that the random variables $(Q_i)_{i\ge1}$ have zero mean.
\end{proof}

\paragraph{Proof of Proposition \ref{Pro:equivalence}.} Recall the following definitions that will be used in our proof:
\BS\label{Eqn:matrix_def_app}
\begin{align}
\paraB{\Sigma}_t&:=
\begin{bmatrix}
\mathbb{E}\left[{\mathsf{R}}_1^2\right]&\mathbb{E}\left[{\mathsf{R}}_1{\mathsf{R}}_2\right]   &  \cdots & \mathbb{E}\left[{\mathsf{R}}_1{\mathsf{R}}_t\right]  \\
\mathbb{E}\left[{\mathsf{R}}_2{\mathsf{R}}_1\right] & \mathbb{E}\left[{\mathsf{R}}_2^2\right]   &  \cdots &\mathbb{E}\left[{\mathsf{R}}_2{\mathsf{R}}_t\right]  \\
%\langle\partial_1 \bm{u}_3\rangle& \langle\partial_2 \bm{u}_3 \rangle &0 & &\\
\vdots& \vdots &\ddots  & \vdots \\
\mathbb{E}\left[{\mathsf{R}}_t{\mathsf{R}}_1\right] & \mathbb{E}\left[{\mathsf{R}}_t{\mathsf{R}}_2\right]   & \cdots  & \mathbb{E}\left[{\mathsf{R}}_t^2\right] 
\end{bmatrix},\\
\paraB{\Delta}_t&:=
\begin{bmatrix}
\mathbb{E}\left[{\mathsf{U}}_1^2\right]&\mathbb{E}\left[{\mathsf{U}}_1{\mathsf{U}}_2\right]   &  \cdots & \mathbb{E}\left[{\mathsf{U}}_1{\mathsf{U}}_t\right]  \\
\mathbb{E}\left[{\mathsf{U}}_2{\mathsf{U}}_1\right] & \mathbb{E}\left[{\mathsf{U}}_2^2\right]   &  \cdots &\mathbb{E}\left[{\mathsf{U}}_2{\mathsf{U}}_t\right]  \\
%\langle\partial_1 \bm{u}_3\rangle& \langle\partial_2 \bm{u}_3 \rangle &0 & &\\
\vdots& \vdots &\ddots  & \vdots \\
\mathbb{E}\left[{\mathsf{U}}_t{\mathsf{U}}_1\right] & \mathbb{E}\left[{\mathsf{U}}_t{\mathsf{U}}_2\right]   & \cdots  & \mathbb{E}\left[{\mathsf{U}}_t^2\right] 
\end{bmatrix},\\
\bar{\paraB{\Delta}}_t& :=
\begin{bmatrix}
\mathbb{E}\left[\bar{\mathsf{U}}_1^2\right]&\mathbb{E}\left[\bar{\mathsf{U}}_1\bar{\mathsf{U}}_2\right]   &  \cdots & \mathbb{E}\left[\bar{\mathsf{U}}_1\bar{\mathsf{U}}_t\right]  \\
\mathbb{E}\left[\bar{\mathsf{U}}_2\bar{\mathsf{U}}_1\right] & \mathbb{E}\left[\bar{\mathsf{U}}_2^2\right]   &  \cdots &\mathbb{E}\left[\bar{\mathsf{U}}_2\bar{\mathsf{U}}_t\right]  \\
%\langle\partial_1 \bm{u}_3\rangle& \langle\partial_2 \bm{u}_3 \rangle &0 & &\\
\vdots& \vdots &\ddots  & \vdots \\
\mathbb{E}\left[\bar{\mathsf{U}}_t\bar{\mathsf{U}}_1\right] & \mathbb{E}\left[\bar{\mathsf{U}}_t\bar{\mathsf{U}}_2\right]   & \cdots  & \mathbb{E}\left[\bar{\mathsf{U}}_t^2\right] 
\end{bmatrix},\\
\paraB{\Phi}_t&:=
\begin{bmatrix}
0&  &  &  &  \\
\mathbb{E}\left[\partial_1 \para{U}_2\right]& 0 &  &  & \\
\mathbb{E}\left[\partial_1 \para{U}_3\right]& \mathbb{E}\left[\partial_2 \para{U}_3\right] &0 & &\\
\vdots & \vdots & \ddots &  &  \\
\mathbb{E}\left[\partial_1 \para{U}_t\right] &\mathbb{E}\left[\partial_2 \para{U}_t\right] & \cdots & \mathbb{E}\left[\partial_{t-1} \para{U}_t\right]  & 0
\end{bmatrix},
\end{align}
where  $(\mathsf{R}_1,\ldots,\mathsf{R}_{j-1})$ are jointly Gaussian with zero mean, and we denoted
\BE
\mathbb{E}\left[\partial_i \para{U}_j\right] :=\mathbb{E}\left[\partial_i\eta_j(\mathsf{R}_1,\ldots,\mathsf{R}_{j-1})\right],\quad\forall j>1, i\in[j-1].
\EE
\ES
Recall that the covariance $\paraB{\Sigma}_t$ in \eqref{Eqn:AMP_SE_convergence} is expressed using $\bar{\paraB{\Delta}}_t$, while the covariance in \eqref{Eqn:cov_Fan} is expressed using ${\paraB{\Delta}}_t$, where $\bar{\paraB{\Delta}}_t$ and ${\paraB{\Delta}}_t$ denote the covariance of the state evolution random variables $(\bar{\mathsf{U}}_j)_{j\in[t]}$ and $({\mathsf{U}}_j)_{j\in[t]}$ respectively (see \eqref{Eqn:Delta_bar_def} and \eqref{Eqn:Delta_def_Fan}):

 To related these two expressions, recall the definition
\BE
{\mathsf{U}}_j=\bar{\mathsf{U}}_j+\sum_{i=1}^{j-1}\mathbb{E}\left[\partial_i{\mathsf{U}}_j\right]\cdot \mathsf{R}_i,\quad\forall j >1,
\EE
which can be written into a matrix form
\BS\label{Eqn:Delta_deltabar_relation_app}
\begin{align}
\begin{bmatrix}
{\mathsf{U}}_1\\
{\mathsf{U}}_2\\
\vdots\\
{\mathsf{U}}_t
\end{bmatrix}
&=
\begin{bmatrix}
\bar{\mathsf{U}}_1\\
\bar{\mathsf{U}}_2\\
\vdots\\
\bar{\mathsf{U}}_t
\end{bmatrix}
+
\begin{bmatrix}
0&  &  &  &  \\
\mathbb{E}\left[\partial_1 \para{U}_2\right]& 0 &  &  & \\
\mathbb{E}\left[\partial_1 \para{U}_3\right]& \mathbb{E}\left[\partial_2 \para{U}_3\right] &0 & &\\
\vdots & \vdots & \ddots &  &  \\
\mathbb{E}\left[\partial_1 \para{U}_t\right] &\mathbb{E}\left[\partial_2 \para{U}_t\right] & \cdots & \mathbb{E}\left[\partial_{t-1} \para{U}_t\right]  & 0
\end{bmatrix}
\begin{bmatrix}
{\mathsf{R}}_1\\
{\mathsf{R}}_2\\
\vdots\\
{\mathsf{R}}_{t}
\end{bmatrix}\\
&:=
\begin{bmatrix}
\bar{\mathsf{U}}_1\\
\bar{\mathsf{U}}_2\\
\vdots\\
\bar{\mathsf{U}}_t
\end{bmatrix}
+\paraB{\Phi}_t
\begin{bmatrix}
{\mathsf{R}}_1\\
{\mathsf{R}}_2\\
\vdots\\
{\mathsf{R}}_{t}
\end{bmatrix}.
\end{align}
\ES
From Proposition \ref{Pro:orthogonality}, we have the following orthogonality property:
\BE\label{Eqn:orthogonality_app}
\mathbb{E}\left[\bar{\mathsf{U}}_j\mathsf{R}_i\right]=0,\quad\forall i,j\ge1.
\EE
Following \eqref{Eqn:matrix_def_app}, \eqref{Eqn:Delta_deltabar_relation_app} and \eqref{Eqn:orthogonality_app}, the covariance matrices $\paraB{\Delta}_t$ and $\bar{\paraB{\Delta}}_t$ satisfy the following relation:
\begin{equation}\label{Eqn:Delta_deltabar_relation}
\paraB{\Delta}_t=  \bar{\paraB{\Delta}}_t+\paraB{\Phi}_t\paraB{\Sigma}_{t}\paraB{\Phi}_t^\UT.
\end{equation}
Using \eqref{Eqn:Delta_deltabar_relation}, we can rewrite \eqref{Eqn:Cov_SE_new} as
\BS\label{Eqn:Cov_SE_new_call}
\begin{align}
\paraB{\Sigma}_t &=
\mathbb{E}\left[\paraB{P}_t(\mathsf{\Lambda})
\bar{\paraB{\Delta}}_t
\paraB{P}_t(\mathsf{\Lambda})^\UT
\right]\\
&=\mathbb{E}\left[ \left(\sum_{i=1}^t Q_i(\mathsf{\Lambda}){\paraB{\Phi}}_t^{i-1}\right)
\bar{\paraB{\Delta}}_t
\left(\sum_{i=1}^t Q_i(\mathsf{\Lambda}){\paraB{\Phi}}_t^{i-1}\right)^\UT
\right]\\
&=\sum_{i=1}^t\sum_{j=1}^t \mathbb{E}_{\mathsf{\Lambda}\sim\mu}\left[Q_i(\mathsf{\Lambda})Q_j(\mathsf{\Lambda})\right]\cdot \paraB{\Phi}_t^{i-1}\, \bar{\paraB{\Delta}}_t\, (\paraB{\Phi}_t^{j-1})^\UT \\
&:=\sum_{i=1}^t\sum_{j=1}^t \Omega_{i,j} \paraB{\Phi}_t^{i-1}\, \bar{\paraB{\Delta}}_t\, (\paraB{\Phi}_t^{j-1})^\UT \\
 &=\sum_{i=1}^t\sum_{j=1}^t \Omega_{i,j} \paraB{\Phi}_t^{i-1}\, \left(\paraB{\Delta}_t-\paraB{\Phi}_t\paraB{\Sigma}_{t}\paraB{\Phi}_t^\UT\right)\, (\paraB{\Phi}_t^{j-1})^\UT \\
&=\sum_{i=1}^t\sum_{j=1}^t \Omega_{i,j} \paraB{\Phi}_t^{i-1}\, \paraB{\Delta}_t\, (\paraB{\Phi}_t^{j-1})^\UT - \sum_{i=1}^t\sum_{j=1}^t \Omega_{i,j} \paraB{\Phi}_t^{i}\, \paraB{\Sigma}_t\, (\paraB{\Phi}_t^{j})^\UT,
\end{align}
where for convenience we denoted
\BE
\Omega_{i,j}:=\mathbb{E}_{\mathsf{\Lambda}\sim\mu}\left[Q_i(\mathsf{\Lambda})Q_j(\mathsf{\Lambda})\right],\quad \forall i\ge0,j\ge0.
\EE
\ES
Since $Q_0=1$, we can write $\paraB{\Sigma}_t =\mathbb{E}_{\mathsf{\Lambda}\sim\mu}\left[Q_0(\mathsf{\Lambda})Q_0(\mathsf{\Lambda})\right]\cdot \paraB{\Phi}_t^{0}\, \paraB{\Sigma}_t\, (\paraB{\Phi}_t^{0})^\UT$. Moreover, $\mathbb{E}[Q_i(\mathsf{\Lambda})]=0$, $\forall i\ge1$. Hence, 
\BE
\paraB{\Sigma}_t +\sum_{i=1}^t\sum_{j=1}^t\Omega_{i,j} \paraB{\Phi}_t^{i}\, \paraB{\Sigma}_t\, (\paraB{\Phi}_t^{j})^\UT=\sum_{i=0}^t\sum_{j=0}^t \Omega_{i,j} \paraB{\Phi}_t^{i}\, \paraB{\Sigma}_t\, (\paraB{\Phi}_t^{j})^\UT.
\EE
Therefore, we can rewrite \eqref{Eqn:Cov_SE_new_call} as follows
\BE\label{Eqn:Sigma_Delta_app}
\sum_{i=0}^t\sum_{j=0}^t \Omega_{i,j} \paraB{\Phi}_t^{i}\, \paraB{\Sigma}_t\, (\paraB{\Phi}_t^{j})^\UT = \sum_{i=1}^t\sum_{j=1}^t \Omega_{i,j} \paraB{\Phi}_t^{i-1}\, \paraB{\Delta}_t\, (\paraB{\Phi}_t^{j-1})^\UT.
\EE
We treat \eqref{Eqn:Sigma_Delta_app} as an equation of $\paraB{\Sigma}_t$, with $(\paraB{\Phi}_t,\paraB{\Delta}_t)$ being fixed matrices. We remark that this equation has a unique solution. To see this, we use the identity $\text{vec}(\bm{AXB}^\UT)=(\bm{B}\otimes\bm{A})\text{vec}(\bm{X})$. Then, a vectorization of the LHS of \eqref{Eqn:Sigma_Delta_app} reads
\BE
\left(\bm{I}_{t^2}+\Omega_{1,0}\bm{I}_t\otimes\paraB{\Phi}_t+\Omega_{0,1}\paraB{\Phi}_t\otimes\bm{I}_t+\Omega_{2,0}\bm{I}_t\otimes\paraB{\Phi}_t^2+\cdots\right)\text{vec}(\paraB{\Sigma}_t):=\bm{M}\text{vec}(\paraB{\Sigma}_t)
\EE
Note that $\paraB{\Phi}_t$ is strictly lower triangular. Hence, the matrix $\bm{M}$ in the above display is lower triangular with diagonal elements all equal to one. Therefore, $\bm{M}$ is invertible and hence \eqref{Eqn:Sigma_Delta_app} has a unique solution. 

Next, we verify that one solution (and hence the only one) to \eqref{Eqn:Sigma_Delta_app} is 
\BE\label{Eqn:Fan_Sigma_app}
\paraB{\Sigma}_t = \sum_{j=0}^{\infty}\sum_{i=0}^j \kappa_{j+2}\paraB{\Phi}_t^i\paraB{\Delta}_t\left((\paraB{\Phi}_t)^{j-i}\right)^\UT,
\EE
which is precisely \eqref{Eqn:cov_Fan}. This would conclude the equivalence between \eqref{Eqn:Cov_SE_new} and \eqref{Eqn:cov_Fan} which we aim to prove. We first make a change-of-variable $(i,j)\mapsto (m,n)$ in \eqref{Eqn:Fan_Sigma_app} via
\BE
\begin{bmatrix}
i\\
j
\end{bmatrix}
=
\begin{bmatrix}
1 & 0\\
1 & 1
\end{bmatrix}
\begin{bmatrix}
m\\
n
\end{bmatrix}
-\begin{bmatrix}
1\\
2
\end{bmatrix}.
\EE
This map is bijective from $\{(i,j):0\le j< \infty, 0\le i\le j\}$ to $\{(m,n):1\le m <\infty, 1\le n <\infty \}$. We then write \eqref{Eqn:Fan_Sigma_app} as
\BE\label{Eqn:Fan_Sigma_app2}
\paraB{\Sigma}_t = \sum_{m=1}^{\infty}\sum_{n=1}^\infty \kappa_{m+n}\paraB{\Phi}_t^{m-1}\paraB{\Delta}_t\left((\paraB{\Phi}_t)^{n-1}\right)^\UT=\sum_{m=1}^{t}\sum_{n=1}^t \kappa_{m+n}\paraB{\Phi}_t^{m-1}\paraB{\Delta}_t\left((\paraB{\Phi}_t)^{n-1}\right)^\UT,
\EE
where the second step is due to the fact that $\paraB{\Phi}_t$ strictly lower triangular. Towards proving that \eqref{Eqn:Fan_Sigma_app2} is a solution to \eqref{Eqn:Sigma_Delta_app}, we substitute \eqref{Eqn:Fan_Sigma_app} into the LHS of \eqref{Eqn:Sigma_Delta_app}:
\BS\label{Eqn:Delta_RHS_final}
\begin{align}
\sum_{i=0}^t\sum_{j=0}^t \Omega_{i,j} \paraB{\Phi}_t^{i}\, \paraB{\Sigma}_t\, (\paraB{\Phi}_t^{j})^\UT  &= \sum_{i=0}^t\sum_{j=0}^t \Omega_{i,j} \paraB{\Phi}_t^{i}\, \left(\sum_{m=1}^{t}\sum_{n=1}^t \kappa_{m+n}\paraB{\Phi}_t^{m-1}\paraB{\Delta}_t\left((\paraB{\Phi}_t)^{n-1}\right)^\UT\right)\, (\paraB{\Phi}_t^{j})^\UT\\
&=\sum_{i=0}^t\sum_{j=0}^t \sum_{m=1}^{t}\sum_{n=1}^t \Omega_{i,j} \kappa_{m+n} \paraB{\Phi}_t^{i+m-1} \paraB{\Delta}_t\left((\paraB{\Phi}_t)^{n+j-1}\right)^\UT\\
&\explain{(a)}{=}\sum_{(I,J)\in[t]\times [t]} \Bigg(\sum_{(m',n')\in [I]\times [J]}\Omega_{I-m',J-n'}\cdot\kappa_{m'+n'}\Bigg)\cdot \paraB{\Phi}_t^{I-1} \paraB{\Delta}_t\left(\paraB{\Phi}_t^{J-1}\right)^\UT\\
&\explain{(b)}{=}\sum_{(I,J)\in[t]\times [t]}\Omega_{I,J}\cdot \paraB{\Phi}_t^{I-1} \paraB{\Delta}_t\left(\paraB{\Phi}_t^{J-1}\right)^\UT,
\end{align}
\ES
where step (a) is due to a change of variable $(m,i)\mapsto (m',I)$ and $(n,j)\mapsto (n',J)$ via the map:
\begin{align*}
&m'=m,\quad n'=n,\\
&I = m+i,\quad J=n+j,
\end{align*}
and step (b) follows from Lemma \ref{Lem:QQ_kappa} (and the definition $\Omega_{i,j}=\mathbb{E}\left[Q_iQ_j\right] $). We now recognize that \eqref{Eqn:Delta_RHS_final} is identical to the RHS of \eqref{Eqn:Sigma_Delta_app}. This concludes our proof of Proposition \ref{Pro:equivalence}.

\section{{RI-AMP-DF} Algorithm}

\subsection{Reduction of {RI-AMP-DF} to OAMP (Proof of Theorem \ref{The:RI-AMP-DF})}\label{App:RI-AMP-DF-reduction}

\paragraph{Proof of item (1):} The de-biasing matrix \eqref{Eqn:C_t_def} in RI-AMP-DF can be derived following the calculations in Appendix \ref{App:reformulation}. We collect the iterates of {RI-AMP-DF} into the following form (cf.~\eqref{Eqn:RI-AMP-DF-def}):
\BS\label{Eqn:AMP_DF_r_u}
\begin{align}
\begin{bmatrix}
\vr_1 \\
\vr_2 \\
\vdots \\
\vr_t
\end{bmatrix}
&=
\begin{bmatrix}
\bm{W}\vu_1 \\
\bm{W}\vu_2 \\
\vdots \\
\bm{W}\vu_t 
\end{bmatrix}
-
\begin{bmatrix}
\para{c}_{1,1}\bm{I}_N & & & \\
\para{c}_{2,1}\bm{I}_N & \para{b}_{2,2}\bm{I}_N & & \\
\vdots & \vdots & \ddots & \\
\para{c}_{t,1} \bm{I}_N& \para{b}_{t,2}\bm{I}_N & \cdots & \para{b}_{t,t}\bm{I}_N
\end{bmatrix} 
\begin{bmatrix}
\bar{\vu}_1 \\
\bar{\vu}_2 \\
\vdots \\
\bar{\vu}_t 
\end{bmatrix}\\
&=\left(\paraB{I}_t\otimes\bm{W}\right)
\begin{bmatrix}
\vu_1 \\
\vu_2 \\
\vdots \\
\vu_t 
\end{bmatrix}
-(\paraB{C}_t\otimes\bm{I}_N)
\begin{bmatrix}
\bar{\vu}_1 \\
\bar{\vu}_2 \\
\vdots \\
\bar{\vu}_t 
\end{bmatrix}\\
&\explain{(a)}{=}
\left(\paraB{I}_t\otimes\bm{W}\right)\left(
\begin{bmatrix}
\bar{\vu}_1 \\
\bar{\vu}_2 \\
\vdots \\
\bar{\vu}_t 
\end{bmatrix}
+(\hat{\paraB{\Phi}}_t\otimes \bm{I}_N)
\begin{bmatrix}
\vr_1 \\
\vr_2 \\
\vdots \\
\vr_t
\end{bmatrix}
\right)
-(\paraB{C}_t\otimes\bm{I}_N)
\begin{bmatrix}
\bar{\vu}_1 \\
\bar{\vu}_2 \\
\vdots \\
\bar{\vu}_t 
\end{bmatrix}\\
&=(\hat{\paraB{\Phi}}_t\otimes\bm{W})
\begin{bmatrix}
\vr_1 \\
\vr_2 \\
\vdots \\
\vr_t
\end{bmatrix}
+\left(\paraB{I}_t\otimes\bm{W}-\paraB{C}_t\otimes\bm{I}_N\right)
\begin{bmatrix}
\bar{\vu}_1 \\
\bar{\vu}_2 \\
\vdots \\
\bar{\vu}_t 
\end{bmatrix},
\end{align}
\ES
where step (a) is due to the relationship between $\bm{u}_t$ and $\bar{\bm{u}}_t$ (which follows the same calculation as \eqref{Eqn:OAMP_derive_u_ubar} but with $\paraB{D}_t$ replaced by $\hat{\paraB{\Phi}}_t$). Then, following the procedure in Appendix \ref{App:reformulation}, it can be shown that
\BS
\begin{align}
\begin{bmatrix}
\vr_1 \\
\vr_2 \\
\vdots \\
\vr_t
\end{bmatrix} 
&=\left(\bm{I}_{tN}-\hat{\paraB{\Phi}}_t\otimes\bm{W}\right)^{-1}\left(\paraB{I}_t\otimes\bm{W}-\paraB{C}_t\otimes\bm{I}_N\right)
\begin{bmatrix}
\bar{\vu}_1 \\
\bar{\vu}_2 \\
\vdots \\
\bar{\vu}_t 
\end{bmatrix}\\
&:=
\begin{bmatrix}
\hat{G}_{1,1}(\bm{W}) &  &  & \\
\hat{G}_{2,1}(\bm{W}) &\hat{G}_{2,2}(\bm{W})  &  & \\
\vdots & \vdots &\ddots  & \\
\hat{G}_{t,1}(\bm{W}) & \hat{G}_{t,2}(\bm{W}) &\cdots &\hat{G}_{t,t}(\bm{W})
\end{bmatrix}
\begin{bmatrix}
\bar{\vu}_1 \\
\bar{\vu}_2 \\
\vdots \\
\bar{\vu}_t 
\end{bmatrix},
\end{align}
where $(\hat{G}_{i,j})_{1\le i\le t,1\le j\le i}$ is a sequence of polynomials defined as
\BE\label{Eqn:H_matrix_def}
\hat{\paraB{G}}_t(\lambda):=
\begin{bmatrix}
\hat{G}_{1,1}(\lambda)& 0 & \cdots  & 0 \\
\hat{G}_{2,1}(\lambda)& \hat{G}_{2,2}(\lambda)& \cdots  &  0 \\
\vdots & \vdots & \ddots & \\
\hat{G}_{t,1}(\lambda)& \hat{G}_{t,1}(\lambda)& \cdots  &  \hat{G}_{t,t}(\lambda)
\end{bmatrix}
=(\paraB{I}_t-\lambda\hat{\paraB{\Phi}}_t)^{-1}(\lambda\paraB{I}_t-\paraB{C}_t),\quad\forall \lambda\in\mathbb{R}.
\EE
\ES

Following Lemma \ref{Lem:RI_AMP_zero_trace}, we set the de-biasing matrix $\paraB{C}_t$ as the solution to the equation 
\BE\label{Eqn:H_matrix_zero}
\mathbb{E}\left[\hat{\paraB{G}}_t(\mathsf{\Lambda})\right]=\bm{0}_{t\times t},
\EE
where $\hat{\paraB{G}}_t(\mathsf{\Lambda})$ is defined in \eqref{Eqn:H_matrix_def} and the expectation in \eqref{Eqn:H_matrix_zero} is taken w.r.t. $\mathsf{\Lambda}\sim\mu$ independent of $\hat{\paraB{\Phi}}_t$. Enforcing the above condition yields
\BS\label{Eqn:C_equation}
\begin{align}
\paraB{C}_t &= \left(\mathbb{E}\left[(\paraB{I}_t-\mathsf{\Lambda}\hat{\paraB{\Phi}}_t)^{-1}\right]\right)^{-1}\mathbb{E}\left[\mathsf{\Lambda}(\paraB{I}_t-\mathsf{\Lambda}\hat{\paraB{\Phi}}_t)^{-1}\right]\\
&\explain{(a)}{=}\left(\sum_{i=1}^t m_{i-1}\hat{\paraB{\Phi}}_t^{i-1}\right)^{-1}\left(\sum_{i=1}^t m_{i}\hat{\paraB{\Phi}}_t^{i-1}\right),
\end{align}
\ES
where $m_i:=\mathbb{E}[\mathsf{\Lambda}^i]$, $\forall i=0,1,\ldots$ (with the convention $m_0:=1$), and step (a) is due to the expansion
\[
(\paraB{I}_t-\mathsf{\Lambda}\hat{\paraB{\Phi}}_t)^{-1} = \sum_{i=1}^t \mathsf{\Lambda}^{i-1}\hat{\paraB{\Phi}}_t^{i-1}.
\]
(Recall that $\hat{\paraB{\Phi}}_t$ is strictly lower triangular.) From \eqref{Eqn:C_equation}, $\paraB{C}_t $ is clearly a polynomial of $\hat{\paraB{\Phi}}_t$. Further, from \eqref{Eqn:H_matrix_def}, $\hat{\paraB{G}}_t(\lambda)$ is also a polynomial of $\hat{\paraB{\Phi}}_t$. In the following, we apply the trick in the proof of Lemma \ref{Lem:RI_AMP_zero_trace}-(2) to derive a recursive formula for the coefficients in this polynomial representations of $\paraB{C}_t$ and $\hat{\paraB{G}}_t(\lambda)$. 

Let us denote
\BS\label{Eqn:C_G_poly}
\begin{align}
\paraB{C}_t&:=\sum_{i=1}^t \gamma_i\hat{\paraB{\Phi}}_t^{i-1},\\
\hat{\paraB{G}}_t(\lambda) &:=\sum_{i=1}^t H_i(\lambda)\hat{\paraB{\Phi}}_t^{i-1}.
\end{align}
\ES
From \eqref{Eqn:H_matrix_def}, the following equation holds
\BE
(\paraB{I}_t-\lambda\hat{\paraB{\Phi}}_t)\hat{\paraB{G}}_t(\lambda)-\lambda \paraB{I}_t +\paraB{C}_t=\paraB{0}_{t\times t}.
\EE
Using the polynomial representations of $\paraB{C}_t$ and $\hat{\paraB{G}}_t(\lambda)$ in \eqref{Eqn:C_G_poly}, we write the LHS of the above equation as
\BS
\begin{eqnarray}
& &(\paraB{I}_t-\lambda\hat{\paraB{\Phi}}_t)\hat{\paraB{G}}_t(\lambda)-\lambda \paraB{I}_t +\paraB{C}_t\\
&=& \left(\paraB{I}_t-\lambda\hat{\paraB{\Phi}}_t\right)\sum_{i=1}^\infty H_i(\lambda)\hat{\paraB{\Phi}}_t^{i-1}-\lambda \paraB{I}_t +\sum_{i=1}^\infty \gamma_i\hat{\paraB{\Phi}}_t^{i-1}\\
&=&\sum_{i=1}^\infty H_i(\lambda)\hat{\paraB{\Phi}}_t^{i-1}-\sum_{i=1}^\infty\lambda H_i(\lambda)\hat{\paraB{\Phi}}_t^{i}-\lambda \paraB{I}_t +\sum_{i=1}^\infty \gamma_i\hat{\paraB{\Phi}}_t^{i-1}\\
&=&\sum_{i=1}^\infty H_i(\lambda)\hat{\paraB{\Phi}}_t^{i-1}-\sum_{i=2}^\infty\lambda H_{i-1}(\lambda)\hat{\paraB{\Phi}}_t^{i-1}-\lambda \paraB{I}_t +\sum_{i=1}^\infty \gamma_i\hat{\paraB{\Phi}}_t^{i-1}\\
&=& (H_1(\lambda)-\lambda +\gamma_1)\paraB{I}_t+\sum_{i=2}^\infty \left(H_i(\lambda)-\lambda H_{i-1}(\lambda)+\gamma_i\right)\hat{\paraB{\Phi}}_t^{i-1}.
\end{eqnarray}
\ES
Since the above equation is identically zero for any $\hat{\paraB{\Phi}}_t$, we must have
\BS
\begin{align}
H_1(\lambda) &= \lambda-\gamma_1,\\
H_i(\lambda) &= \lambda H_{i-1}(\lambda)-\gamma_i,\quad \forall i\ge2.
\end{align}
\ES
This provides a recursive definition of $(H_i(\lambda))_{i\ge1}$ in terms of the sequence $(\gamma_i)_{i\ge1}$. On the other hand, $(\gamma_i)_{i\ge1}$ are set such that $(H_i(\mathsf{\Lambda}))_{i\ge1}$ have zero means w.r.t. $\mathsf{\Lambda}\in\mu$. Hence,
\BS
\begin{align}
\gamma_1&=\mathbb{E}[\mathsf{\Lambda}],\\
\gamma_i &= \mathbb{E}[\lambda H_{i-1}(\mathsf{\Lambda})],\quad \forall i\ge2.
\end{align}
\ES

To summarize, we have the following recursive representations of $(H_i(\lambda))_{i\ge1}$:
\BE
H_i(\lambda)= \lambda H_{i-1}(\lambda)-\mathbb{E}\left[\lambda H_{i-1}(\lambda)\right],\quad\forall i\ge1,
\EE
where $H_0(\lambda):=1$. The sequences $(\gamma_i)_{i\ge1}$ are given by
\BE
\gamma_i=\mathbb{E}[\lambda H_{i-1}(\mathsf{\Lambda})],\quad \forall i\ge1.
\EE

\paragraph{Proof of item (2):} The state evolution of {RI-AMP-DF} readily follows from the master state evolution result of OAMP. Its proof is similar to that of Theorem \ref{Th:AMP_SE} and omitted.

\subsection{Reduction of GFOM to {RI-AMP-DF}}\label{App:RI-AMP-DF-GFOM}

We first recall the definition of generalized first order method (GFOM) introduced in \cite{montanari2022statistically,celentano2020estimation}.

\begin{definition}[GFOM \cite{montanari2022statistically,celentano2020estimation}]

A generalized first order method (GFOM) generates the iterates $(\bm{x}_t)_{t\ge1}$ via
\BE\label{Eqn:GFOM}
\bm{x}_t = \bm{W} \phi_t(\bm{x}_1,\ldots,\bm{x}_{t-1};\bm{a}) + \psi_t(\bm{x}_1,\ldots,\bm{x}_{t-1};\bm{a}),\quad \forall t\ge1.
\EE
At each iteration, the output $\hat{\bm{x}}_t $ is generated by further applying a post-processing:
\BE
\hat{\bm{x}}_t = h_t\left(\bm{x}_1,\ldots,\bm{x}_t;\bm{a}\right).
\EE
In the above equations, the functions $\phi_t:\mathbb{R}^{t-1}\times\mathbb{R}^k\mapsto\mathbb{R}$, $\psi_t:\mathbb{R}^{t-1}\times\mathbb{R}^k\mapsto\mathbb{R}$ and $h_t:\mathbb{R}^{t}\times\mathbb{R}^k\mapsto\mathbb{R}$, are all continuously-differentiable and Lipschitz. Further, they all act on the $N$ components of their input vectors separately. Moreover, these functions do not depend on the dimension $N$.
\end{definition}

For the purpose of establishing a precise reduction result, we make a minor change to the {RI-AMP-DF} algorithm: we replace the divergence terms and the de-biasing terms by their limiting deterministic equivalents. We call it deterministic {RI-AMP-DF}. 

\begin{definition}[Deterministic {RI-AMP-DF}]
Let $\bm{u}_1=\bar{\bm{u}}_1\in\mathbb{R}^N$ and generate $(\bm{r}_t)_{t\ge1}$ through
\BS\label{Eqn:RI-AMP-DF-deterministic}
\begin{align}
\vr_t &= \mW\vu_t - \left(\para{c}_{t,1}\bar{\bm{u}}_1+\para{c}_{t,2}\bar{\bm{u}}_2+\cdots+\para{c}_{t,t}\bar{\bm{u}}_t\right), \quad\forall t\ge1,\\
\vu_{t+1} &= \eta_{t+1} (\vr_1,\ldots,\vr_t),\\
\bar{\bm{u}}_{t+1} &=\eta_{t+1} (\vr_1,\ldots,\vr_t)-\left(\para{d}_{t,1}\bm{r}_1+\cdots+\para{d}_{t,t} \bm{r}_{t}\right),
\end{align}
\ES
where, for all $t\ge1$, $(\para{c}_{t,i})_{1\le t,1\le i\le t}$ and $(\para{d}_{t,1})_{1\le t,1\le i\le t}$ are deterministic constants that only depends on the function $(\eta_i)_{i\ge1}$ and the limiting spectrum $\mu$, but not on the dimension $N$.
\end{definition}

Following the idea in \cite{montanari2022statistically,celentano2020estimation}, we show in the following proposition that (deterministic) {RI-AMP-DF} can implement any GFOM through a proper change of variables. Since {RI-AMP-DF} is itself a GFOM, this implies that {RI-AMP-DF} and GFOM belong to the same class of algorithms. It is straightforward to show that the same claim applies to the original RI-AMP algorithm.

\begin{proposition}[Deterministic {RI-AMP-DF} can implement any GFOM]\label{Prop:RI-AMP-DF-reduction}
Let $(\bm{x}_t)_{t\ge1}$ be generated by any GFOM. There exists a deterministic {RI-AMP-DF} algorithm, whose iterates are denoted as $(\bm{r}_t)_{t\ge1}$, and a post-processing function $\varphi_t:\mathbb{R}^t\times\mathbb{R}^k\mapsto\mathbb{R}$, such that the following holds:
\[
\bm{x}_t = \varphi_t(\bm{r}_1,\ldots,\bm{r}_t;\bm{a}),\quad\forall t\ge1.
\]
\end{proposition}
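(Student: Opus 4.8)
## Proof Proposal for Proposition \ref{Prop:RI-AMP-DF-reduction}

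The plan is to show, by induction on $t$, that the iterates $(\bm{x}_1,\ldots,\bm{x}_t)$ produced by the GFOM lie in the span of $(\bm{r}_1,\ldots,\bm{r}_t;\bm{a})$ in the strong sense that each $\bm{x}_s$ is a fixed continuously-differentiable Lipschitz function (applied row-wise) of $(\bm{r}_1,\ldots,\bm{r}_s;\bm{a})$, while simultaneously building the denoisers $\eta_t$ and the deterministic coefficients $(\para{c}_{t,i}),(\para{d}_{t,i})$ of the RI-AMP-DF algorithm. The key structural observation is that one GFOM step $\bm{x}_t = \bm{W}\phi_t(\bm{x}_{<t};\bm{a}) + \psi_t(\bm{x}_{<t};\bm{a})$ contains exactly one multiplication by $\bm{W}$, which is precisely the budget of one RI-AMP-DF step $\bm{r}_t = \bm{W}\bm{u}_t - (\text{Onsager})$. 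So I would set $\bm{u}_t := \phi_t(\bm{x}_1,\ldots,\bm{x}_{t-1};\bm{a})$, i.e. define $\eta_t$ to be $\phi_t$ composed with the (inductively constructed) maps $\varphi_1,\ldots,\varphi_{t-1}$ that express $\bm{x}_s$ in terms of $(\bm{r}_1,\ldots,\bm{r}_s;\bm{a})$; this $\eta_t$ is continuously-differentiable and Lipschitz since it is a composition of such maps. Then
\[
\bm{x}_t = \bm{W}\bm{u}_t + \psi_t(\bm{x}_{<t};\bm{a}) = \bm{r}_t + \big(\para{c}_{t,1}\bar{\bm{u}}_1 + \cdots + \para{c}_{t,t}\bar{\bm{u}}_t\big) + \psi_t(\bm{x}_{<t};\bm{a}),
\]
and since $\bar{\bm{u}}_t = \eta_t(\bm{r}_1,\ldots,\bm{r}_{t-1}) - (\para{d}_{t-1,1}\bm{r}_1 + \cdots)$ is itself a Lipschitz continuously-differentiable function of $(\bm{r}_1,\ldots,\bm{r}_{t-1};\bm{a})$, the right-hand side is a function $\varphi_t(\bm{r}_1,\ldots,\bm{r}_t;\bm{a})$ of the desired form. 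This closes the induction and gives the claimed $\bm{x}_t = \varphi_t(\bm{r}_1,\ldots,\bm{r}_t;\bm{a})$.

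The base case $t=1$ is immediate: a GFOM with no past iterates has $\bm{x}_1 = \bm{W}\phi_1(\bm{a}) + \psi_1(\bm{a})$; take $\bm{u}_1 = \bar{\bm{u}}_1 = \phi_1(\bm{a})$ (a deterministic function of $\bm{a}$, so the side-information slot of RI-AMP-DF absorbs it), so that $\bm{r}_1 = \bm{W}\bm{u}_1 - \para{c}_{1,1}\bar{\bm{u}}_1$ and $\bm{x}_1 = \bm{r}_1 + \para{c}_{1,1}\bar{\bm{u}}_1 + \psi_1(\bm{a}) =: \varphi_1(\bm{r}_1;\bm{a})$. For the inductive step, the only thing that requires care is verifying that the constructed $\eta_t = \phi_t\circ(\varphi_1,\ldots,\varphi_{t-1})$ and the construction of $\bar{\bm{u}}_t$ are mutually consistent: $\varphi_{t-1}$ must already be available when we define $\eta_t$, and $\bar{\bm{u}}_t$ depends on $\bm{r}_1,\ldots,\bm{r}_{t-1}$ which are available, so the ordering of definitions is $\varphi_{t-1} \to \eta_t \to \bm{u}_t \to \bar{\bm{u}}_t \to \bm{r}_t \to \varphi_t$, which is well-founded.

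The main obstacle is a bookkeeping subtlety rather than a conceptual one: the coefficients $(\para{c}_{t,i})$ and $(\para{d}_{t,i})$ in \emph{deterministic} RI-AMP-DF are not free parameters — they are pinned down by $\mu$ and the chosen $\eta$'s (via the trace-free reduction of Theorem \ref{The:RI-AMP-DF}, or rather its deterministic-equivalent version), so I cannot simply "choose" them to make the algebra work. I must instead argue that whatever these forced values are, the displayed identity $\bm{x}_t = \bm{r}_t + \sum_i \para{c}_{t,i}\bar{\bm{u}}_i + \psi_t(\bm{x}_{<t};\bm{a})$ still holds as an \emph{algebraic} identity once $\bm{u}_t := \phi_t(\bm{x}_{<t};\bm{a})$ is substituted back into the RI-AMP-DF recursion — which it does, because that identity is just the definition of $\bm{r}_t$ rearranged, with no constraint on the $\para{c}$'s. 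The point is that the $\para{c}$'s appear on \emph{both} sides (inside $\bm{r}_t$ and in the explicit Onsager sum) and cancel, so the map $\varphi_t$ exists regardless of their values; the $\para{d}$'s likewise only reshuffle $\bar{\bm{u}}_t$ versus $\bm{r}_i$ and are absorbed into $\varphi_t$. I would state this cancellation explicitly and note that it is the reason the reduction is insensitive to the precise (deterministic) choice of de-biasing coefficients, paralleling the GFOM$\Leftrightarrow$AMP equivalence in \cite{montanari2022statistically,celentano2020estimation}. Finally, I would remark that the reverse inclusion (RI-AMP-DF is itself a GFOM) is immediate from inspecting \eqref{Eqn:RI-AMP-DF-deterministic}, giving the two-sided equivalence asserted in the surrounding text.
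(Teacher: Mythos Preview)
Your proposal is correct and follows essentially the same inductive argument as the paper's proof, which is itself a direct adaptation of \cite[Lemma 4.1]{montanari2022statistically}: set $\eta_{t}:=\phi_{t}\circ(\varphi_1,\ldots,\varphi_{t-1})$, rearrange the definition of $\bm{r}_t$ to express $\bm{x}_t$ as $\bm{r}_t+\sum_i\para{c}_{t,i}\bar{\bm{u}}_i+\psi_t(\ldots)$, and absorb everything on the right into $\varphi_t$. Your explicit treatment of the base case and your remark that the forced values of $(\para{c}_{t,i}),(\para{d}_{t,i})$ are irrelevant to the algebraic identity (since they appear identically on both sides of the rearranged definition of $\bm{r}_t$) are slightly more detailed than the paper's terse version, but the substance is the same.
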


\begin{proof}
The proof is essentially identical to \cite[Lemma 4.1]{montanari2022statistically}, and we include it here for completeness. We prove by induction. The claim clearly holds for $t=1$. Suppose it holds up to iteration $t\ge1$, we prove that it also holds for iteration $t+1$. From \eqref{Eqn:GFOM}, the new iterate $\bm{x}_{t+1}$ of GFOM reads:
\BS
\begin{align}
\bm{x}_{t+1}& = \bm{W} \phi_{t+1}(\bm{x}_1,\ldots,\bm{x}_{t};\bm{a}) + \psi_{t+1}(\bm{x}_1,\ldots,\bm{x}_{t};\bm{a})\\
&\explain{(a)}{=} \bm{W} \phi_{t+1}\left(\varphi_1(\bm{r}_1;\bm{a}),\ldots,\varphi_t(\bm{r}_{\le t};\bm{a});\bm{a}\right) + \psi_{t+1}\left(\varphi_1(\bm{r}_1;\bm{a}),\ldots,\varphi_t(\bm{r}_{\le t};\bm{a});\bm{a}\right)\\
&\explain{(b)}{=}\bm{W}  \eta_{t+1}(\bm{r}_{\le t};\bm{a}) + \psi_{t+1}\left(\varphi_1(\bm{r}_1;\bm{a}),\ldots,\varphi_t(\bm{r}_{\le t};\bm{a});\bm{a}\right)\\
&\explain{(c)}{=}\underbrace{\bm{W} \bm{u}_{t+1}-\sum_{i=1}^t  \para{c}_{t,i} \bar{\bm{u}}_i}_{\bm{r}_{t+1}}+\sum_{i=1}^t \para{c}_{t,i}\bar{\bm{u}}_i+ \psi_{t+1}\left(\varphi_1(\bm{r}_1;\bm{a}),\ldots,\varphi_t(\bm{r}_{\le t};\bm{a});\bm{a}\right)\\
&\explain{(d)}{=}\varphi_{t+1}(\bm{r}_1,\ldots,\bm{r}_{t+1};\bm{a}),
\end{align}
\ES
where 
\begin{itemize}
\item Step (a) is from the induction hypothesis;
\item Step (b) is from the definition of the new denoising function $\eta_{t+1}$ for deterministic {RI-AMP-DF};
\item Step (c) is from the definition of the new iterate $\bm{r}_{t+1}$ in {RI-AMP-DF}. Note that $(\para{c}_{t,i})_{1\le t,1\le i\le t}$ are deterministic constants that depend on the denoising functions in previous iterations and the spectrum $\mu$;
\item Step (d) is a definition of the new post-processing function $\varphi_{t+1}$.
\end{itemize}
The proof is now complete.
\end{proof}

\end{document}